\definecolor{darklimegreen}{RGB}{31,142,8}
\newcommand{\bC}{{\mathds C}}
\newcommand{\bF}{{\mathds F}}
\newcommand{\bG}{{\mathds G}}
\newcommand{\bH}{{\mathds H}}
\newcommand{\bR}{{\mathds R}}
\newcommand{\cB}{{\mathscr B}}
\newcommand{\cD}{{\mathscr D}}
\newcommand{\cF}{{\mathscr F}}
\newcommand{\cG}{{\mathscr G}}
\newcommand{\cS}{{\mathscr S}}
\newcommand{\cT}{{\mathscr T}}
\newcommand{\cU}{{\mathscr U}}
\newcommand{\cW}{{\mathscr W}}
\newcommand{\cX}{{\mathscr X}}
\DeclareSymbolFont{cyrletters}{OT2}{wncyr}{m}{n}
\DeclareMathSymbol{\Sha}{\mathalpha}{cyrletters}{"58}
\newtheoremstyle{alexthm}
  {\bigskipamount}
  {\bigskipamount}
  {\sl }
  {}
  {\bf}
  {.}
  {.5em}
  {}
\theoremstyle{alexthm}
\newtheorem{theorem}{Theorem}[section]
\newtheorem{corollary}[theorem]{Corollary}
\newtheorem{proposition}[theorem]{Proposition}
\newtheorem{lemma}[theorem]{Lemma}
\newtheorem{thmABC}{Theorem}
\newtheoremstyle{alexdef}
  {\bigskipamount}
  {\bigskipamount}
  {\rm }
  {}
  {\bf}
  {.}
  {.5em}
  {}
\theoremstyle{alexdef}
\newtheorem{definition}[theorem]{Definition}
\newtheorem{remark}[theorem]{Remark}
\newtheorem{notconv}[theorem]{Convention}
\numberwithin{equation}{section}
 \newcommand{\im}{\mathrm{im}}
 \DeclareMathOperator{\Spec}{Spec}
 \newcommand{\Gal}{\operatorname{Gal}}
  \newcommand{\Ind}{\operatorname{Ind}}
 \newcommand{\Hom}{\operatorname{Hom}}
 \newcommand{\IHom}{\operatorname{\mathcal{H}\!\mathit{om}}}
 \newcommand{\IExt}{\mathcal{E}\!\mathit{xt}}
  \newcommand{\coker}{\operatorname{coker}}
  \newcommand{\Tot}{\operatorname{Tot}}
\newcommand{\et}{\mathit{et}}
\newcommand{\nr}{\mathit{nr}}
 \newcommand{\Q}{{\mathds  Q}}
 \newcommand{\Z}{{\mathds  Z}}
 \renewcommand{\O}{\mathcal{O}}
\newcommand{\T}{\mathcal{T}}
\newcommand{\Br}{\operatorname{Br}}
\newcommand{\lang}{\longrightarrow}
\newcommand{\Sh}{\mathit{Sh}}
\newcommand{\Ext}{\operatorname{Ext}}
\newcommand{\F}{\mathds{F}}
\newcommand{\ds}{\displaystyle}
\newcommand{\G}{\mathds{G}}
\DeclareMathOperator{\inv}{\mathit{inv}}
\DeclareMathOperator{\tr}{tr}
\newcommand{\R}{\mathrm R}
\DeclareMathOperator{\cone}{cone}
\DeclareMathOperator{\codim}{codim}
\DeclareMathOperator{\ch}{\mathit{char}}
\newcommand{\liso}{\mathrel{\hbox{$\longrightarrow$} \kern-2.4ex\lower-1ex\hbox{$\scriptstyle\sim$}\kern1.6ex}}
\newcommand{\kiso}{\mathrel{\hbox{$\rightarrow$} \kern-1.9ex\lower-1ex\hbox{$\scriptstyle\sim$}\kern.5ex}}
\newcommand{\resprodsign}{\setbox0 = \hbox{$\ds\prod$}
    \hbox{\vtop{\copy0\kern -1.6pt \hrule\vspace*{1pt}}}}   
\newcommand{\rresprodsign}{\setbox0 = \hbox{$\prod$}          
    \hbox{\vtop{\copy0\kern -.4pt\hrule}}}         
\newcommand{\resprod}{\mathop{\mathchoice%
{\resprodsign}
{\rresprodsign}
{\scriptstyle \rresprodsign}
{\scriptscriptstyle \rresprodsign}
}\limits}
\DeclareRobustCommand{\SkipTocEntry}[5]{}
\begin{document}

\title{\bf Poitou-Tate duality for arithmetic schemes}
\author{Thomas H. Geisser}
\address{Rikkyo University, Department of Mathematics, 3-34-1 Nishi-Ikebukuro, Toshima-ku,
Tokyo Japan 171-8501}
\email{geisser@rikkyo.ac.jp}
\author{Alexander Schmidt}
\address{Mathematisches  Institut, Universit\"{a}t Heidelberg, Im Neuenheimer Feld 205, 69120 Heidelberg, Deutschland}
\email{schmidt@mathi.uni-heidelberg.de}
\date{}
\maketitle

\noindent
The classical Poitou-Tate theorem considers the cohomology of Galois groups with restricted ramification of global fields. It states a perfect duality between Sha\-fa\-re\-vich-Tate groups, and a $9$-term exact sequence relating global and local cohomology groups, cf.\ \cite[(8.6.7),\,(8.6.10)]{NSW}.
We prove the following generalization to higher dimensional schemes (see \cref{sec1} for the notation).

\medskip
Let $S$ be a nonempty set of places of a global field $k$, and assume that $S$ contains the set $S_\infty$ of archimedean places if\/ $k$ is a number field. Let $\O_S$ be the ring of $S$-integers in $k$ and $\cS=\Spec \O_S$. Let $\cX\to \cS$ be a regular, flat, and separated scheme of finite type of relative dimension~$r$, $m\ge 1$ an integer invertible on $\cS$ and $\cF$ a locally constant, constructible sheaf of $\Z/m\Z$-modules on $\cX$.
We consider the \emph{Shafarevich-Tate groups} defined by
\begin{eqnarray*}
  \Sha^i(\cX,\cS,\cF) &=& \ker\big( H^i_\et(\cX, \cF) \to \textstyle\prod_{v\in S} \hat H^i_\et(\cX \otimes_{\O_S} k_v,\cF)\big) \\
  \Sha^i_c(\cX,\cS,\cF) &=& \ker\big( H^i_c(\cX, \cF) \to \textstyle\prod_{v\in S} \hat H^i_c(\cX \otimes_{\O_S} k_v,\cF)\big).
\end{eqnarray*}
\begin{thmABC}[Poitou-Tate duality]\label{PTdual} Under the assumptions above, the Shafarevich-Tate groups are finite and there are perfect pairings for $i=0,\ldots,2r+2$:
\begin{equation}
 \Sha^i(\cX,\cS,\cF)\ \times \ \Sha^{2r+3-i}_c(\cX,\cS,\cF^\vee(r+1)) \lang \Q/\Z.
\end{equation}
\end{thmABC}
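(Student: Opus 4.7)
The plan is to establish Theorem~A by the same strategy that deduces classical Poitou-Tate from Artin-Verdier duality for $\cS$ and local Tate duality \cite[Ch.~VIII]{NSW}: three ingredients combine to produce a higher-dimensional nine-term exact sequence, whence the duality of Shafarevich-Tate groups follows by diagram chase.

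The three ingredients are as follows. (i) \emph{Global Artin-Verdier duality} for $\cX$: under our hypotheses there is a perfect pairing of finite groups
$$H^i_\et(\cX, \cF) \times H^{2r+3-i}_c(\cX, \cF^\vee(r+1)) \to \Q/\Z.$$
(ii) \emph{Local duality} at each $v \in S$: a perfect pairing involving the modified cohomologies $\hat H^i_\et(\cX_v, \cF)$ and $\hat H^j_c(\cX_v, \cF^\vee(r+1))$, with the archimedean modification chosen precisely so the pairing is perfect in every degree and reduces at non-archimedean places to ordinary Poincaré--Lefschetz duality over $k_v$ combined with local Tate duality. (iii) The defining long exact sequence for compactly supported cohomology,
$$\cdots \to \prod_{v\in S} \hat H^{i-1}_\et(\cX_v, \cF) \to H^i_c(\cX, \cF) \to H^i_\et(\cX, \cF) \to \prod_{v\in S} \hat H^i_\et(\cX_v, \cF) \to \cdots,$$
together with its analogue for the dual coefficient $\cF^\vee(r+1)$.

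From (iii) one identifies $\Sha^i(\cX, \cS, \cF)$ with $\im\bigl(H^i_c(\cX, \cF) \to H^i_\et(\cX, \cF)\bigr)$. Applying Pontryagin duality to the sequence (iii) for $\cF^\vee(r+1)$ and rewriting each term via (i) and (ii) produces a second long exact sequence for $\cF$, this time involving the local terms $\prod_v \hat H^i_c(\cX_v, \cF)$. Splicing the two sequences yields the higher-dimensional analogue of the nine-term Poitou-Tate exact sequence; from it, the Pontryagin dual of $\Sha^i(\cX, \cS, \cF)$ is identified with $\Sha^{2r+3-i}_c(\cX, \cS, \cF^\vee(r+1))$, and the desired perfect pairing is then formal. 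Finiteness of the Sha groups follows from finiteness of $H^i_c(\cX, \cF)$, which is standard for constructible $\Z/m\Z$-sheaves once $m$ is invertible on $\cS$.

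The main obstacle is the compatibility step: one must verify that the Pontryagin duals of the global and local duality isomorphisms intertwine with the forget-support map $H^i_c \to H^i_\et$ and the localization maps $H^i_\et \to \prod_v \hat H^i_\et$ of (iii), so that the dualized sequence really is of the expected shape and fits with (iii) into a single nine-term sequence. In the classical case $r=0$ this is the content of the functoriality statements for local and global trace maps in \cite[Ch.~VIII]{NSW}; for higher $r$ an analogous but technically heavier verification is required, and the correct bookkeeping of the modified local cohomologies $\hat H^i$ and $\hat H^i_c$ at archimedean places will add a further layer of delicacy. This is where most of the labour is expected to lie.
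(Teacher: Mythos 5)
Your three-ingredient plan correctly mirrors the paper's strategy, and the final deduction of the $\Sha$-pairing from a nine-term Poitou--Tate sequence is exactly how the paper derives Theorem~\ref{PTdual} from Theorems~\ref{PTseq} and~\ref{PTseqcs}, using $\Sha^i(\cF)=\ker\lambda_i(\cF)$, $\Sha^{2r+3-i}_c(\cF^\vee(r+1))=\ker\lambda_{2r+3-i,c}(\cF^\vee(r+1))$ and topological exactness. But two of your ingredients need repair. Ingredient~(i) --- Artin--Verdier duality in the form $H^i_\et(\cX,\cF)\times H^{2r+3-i}_c(\cX,\cF^\vee(r+1))\to\Q/\Z$ --- cannot be cited as known when $\cX\to\cS$ is merely regular, since the usual derivation via $Rf^!(\Z/m\Z)\cong\mu_m^{\otimes(r+1)}[2r+2]$ requires smoothness. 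Supplying this duality is the paper's central new contribution: it is first proved in the form $\hat H^i_c(\cX,\cF^\bullet)\times\Ext^{2-i}_\cX(\cF^\bullet,\Z^c_\cX(0))\to\Q/\Z$ (\cref{artinverdier}) using Bloch's cycle complex $\Z^c_\cX(0)$, which has the localization and base-change properties allowing duality to be pushed down to the base (\cref{localization}, \cref{dualgl}); the Ext-group is only afterwards rewritten as $H^{2r+3-i}_\et(\cX,\cF^\vee(r+1))$ via the cycle class map under the regularity hypothesis (\cref{identifyZc}, \cref{extdegen}).

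Ingredient~(iii) as written conflates distinct notions of compact support and does not exist literally. The group that fits into a long exact sequence with $H^i_\et(\cX,\cF)$ and $\prod_v\hat H^i_\et(X_v,\cF)$ is the modified compact-support cohomology of the base applied to $Rf_*\cF$, not the compact-support cohomology $H^i_\et(\cS,Rf_!\cF)$ of $\cX\to\cS$; for non-proper $\cX$ these genuinely differ. The paper sidesteps your splice-and-compatibility programme entirely by applying a single comparison sequence on the base (\cref{verglohnec}) directly to $Rf_!\cF^\bullet$ and running every term through the cycle-complex dualities, which makes the compatibility you worry about at the end automatic rather than a separate delicate check. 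Moreover, for infinite $S$ the local term must be the restricted product $P^i(\cX,\cF)$ rather than the full product $\prod_v$, and Saito's injectivity lemma (\cref{s1.3}) is needed to form the direct limit over finite $T\subset S$; with the naive $\prod_v$ the topological exactness --- which is precisely what the $\Sha$-pairing ultimately rests on --- would fail.
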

Recall that a homomorphism $\varphi: A \to B$ between topological groups is  \emph{strict} if it is continuous and the isomorphism $A/\ker(\varphi)\kiso \im (\varphi)\subset B$ is a homeomorphism. It is called \emph{proper} if preimages of compact sets are compact.
\begin{thmABC}[Poitou-Tate exact sequence] \label{PTseq}  For $\cX$, $\cS$ and $\cF$ as above, we have an exact $6r+9$-term sequence of abelian topological groups and strict homomorphisms
\begin{equation}\label{longexactPT}
\begin{tikzcd}[row sep=tiny, column sep=tiny]
0\rar&H^0_\et(\cX, \cF)\rar{} & P^0(\cX,\cF)\rar & H^{2r+2}_c(\cX,\cF^\vee(r+1))^\vee\rar&\null\\
&&\cdots &\\
\cdots\rar&H^i_\et(\cX, \cF)\rar{\lambda_i} & P^i(\cX,\cF) \rar& H^{2r+2-i}_c(\cX,\cF^\vee(r+1))^\vee\rar&\null\\
&&\cdots &\\
\cdots\rar&H^{2r+2}_\et(\cX, \cF)\rar{} & P^{2r+2}(\cX,\cF)\rar & H^{0}_c(\cX,\cF^\vee(r+1))^\vee\rar&0.
\end{tikzcd}
\end{equation}
Here
\begin{equation}
P^i(\cX,\cF):= \resprod_{v\in S} \hat H^i_\et(\cX \otimes_{\O_S} k_v,\cF)
\end{equation}
is the restricted product with respect to the subgroups $H^i_{\nr}(\cX \otimes_{\O_S} k_v,\cF)$ (see  \cref{defunramifiedcoh}). The localization map $\lambda_i$ is proper and has finite kernel for all $i$, and for $i\ge 2r+3$,
\begin{equation}
\lambda_i: H^i_\et (\cX, \cF) \liso P^i(\cX,\cF)=\prod_{v\in S_\infty} \hat H^i_\et(\cX \otimes_{\O_S} k_v,\cF)
\end{equation}
is an isomorphism. The groups in the left column of (\ref{longexactPT}) are discrete, those in the middle column locally compact, and those in the right column compact.
\end{thmABC}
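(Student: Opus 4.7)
The plan is to derive the sequence from (i) a global-to-local distinguished triangle in étale cohomology with Tate-modified cohomology at archimedean places, (ii) the Artin--Verdier duality for $\cX$ over $\cS$ that underlies \cref{PTdual}, and (iii) local Tate duality at each $v\in S$. The topological refinements then follow from the finiteness of the Shafarevich--Tate groups provided by \cref{PTdual} and from standard properties of restricted products of locally compact groups.

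For each $v\in S$ write $\cX_v=\cX\otimes_{\O_S}k_v$. I would first set up, in the derived category of abelian topological groups, the distinguished triangle
$$R\Gamma_c(\cX,\cF)\lang R\Gamma_\et(\cX,\cF)\lang \resprod_{v\in S} R\hat\Gamma_\et(\cX_v,\cF)\xrightarrow{+1},$$
where the restricted product is taken with respect to the subcomplexes computing the unramified cohomology $H^i_\nr(\cX_v,\cF)$ of \cref{defunramifiedcoh}. This is the standard excision sequence for the complement of the fibres over $S\smallsetminus S_\infty$, with Tate-modified cohomology inserted at each archimedean place. Spreading $\cF$ out to a constructible sheaf on a dense open of $\cS$ and applying smooth/proper base change shows that for almost every $v$ the image of a global class in $\hat H^i_\et(\cX_v,\cF)$ is unramified, so the restricted product can replace the ordinary product without destroying exactness. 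Passing to cohomology yields the long exact localization sequence
$$\cdots\lang H^i_c(\cX,\cF)\lang H^i_\et(\cX,\cF)\xrightarrow{\lambda_i} P^i(\cX,\cF)\lang H^{i+1}_c(\cX,\cF)\lang\cdots.$$

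Next I would invoke the Artin--Verdier duality for $\cX\to\cS$ (the same global duality behind \cref{PTdual}), which gives perfect pairings $H^j_\et(\cX,\cF)\times H^{2r+3-j}_c(\cX,\cF^\vee(r+1))\to\Q/\Z$, together with local Tate duality $\hat H^i_\et(\cX_v,\cF)\times \hat H^{2r+2-i}_c(\cX_v,\cF^\vee(r+1))\to\Q/\Z$ at each $v$. Pontryagin-dualizing the localization sequence of the previous paragraph applied to $\cF^\vee(r+1)$ and substituting these identifications converts each $H^{i+1}_c(\cX,\cF)$-term into $H^{2r+2-i}_c(\cX,\cF^\vee(r+1))^\vee$, producing the $6r+9$-term sequence \eqref{longexactPT}. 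The boundary map $P^i\to H^{2r+2-i}_c(\cX,\cF^\vee(r+1))^\vee$ is the expected one built from the local pairings, $(\alpha_v)\mapsto\bigl(\beta\mapsto\sum_v\langle\alpha_v,\beta|_{\cX_v}\rangle\bigr)$.

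For the topological refinements: the kernel of $\lambda_i$ is $\Sha^i(\cX,\cS,\cF)$, finite by \cref{PTdual}, handling the finite-kernel assertion; the three columns of \eqref{longexactPT} carry respectively the discrete, restricted-product locally compact, and Pontryagin-dual compact topologies. Properness of $\lambda_i$ and strictness of each map are then argued as in the classical case \cite[(8.6.10)]{NSW}: for a sufficiently large finite set $T\supset S_\infty$, the image of $\lambda_i$ meets the compact subgroup $\prod_{v\notin T}H^i_\nr(\cX_v,\cF)$ of $P^i$ in a subgroup of finite index, so preimages of compact subsets lie in finitely many cosets of the finite group $\Sha^i$. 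For $i\ge 2r+3$, smooth base change and the étale cohomological dimension bound for smooth varieties over non-archimedean local fields force $\hat H^i_\et(\cX_v,\cF)=0$ whenever $v\notin S_\infty$; hence $P^i(\cX,\cF)$ collapses to the finite archimedean product, and Artin--Verdier makes the neighbouring $H^*_c$-terms vanish, yielding the stated isomorphism. The main obstacle is the coherent assembly in the second step: verifying that local and global dualities combine so that Pontryagin-dualizing the localization triangle for $\cF^\vee(r+1)$ splices correctly with the localization sequence for $\cF$ with matching topologies on every term. Once this algebraic backbone is in place, the topological statements are routine generalizations of the classical number-field argument.
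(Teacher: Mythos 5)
Your proposal has the right general shape (localize, dualize, assemble the restricted products), but there are several substantive gaps, and the central technical difficulty of the paper is not addressed.

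\textbf{The claimed duality is not correct as stated, and the localization triangle does not have the form you assert.} You invoke a pairing $H^j_\et(\cX,\cF)\times H^{2r+3-j}_c(\cX,\cF^\vee(r+1))\to\Q/\Z$ where both cohomologies live over $\cS$. But the Artin--Verdier duality available here (the paper's \cref{artinverdier}) pairs $\hat H^r_c(\cX,\cF^\bullet)$, the \emph{modified} compactly supported cohomology taken relative to the full curve $\cB=\Spec\O_\varnothing$, with $\Ext^{2-r}_\cX(\cF^\bullet,\Z^c_\cX(0))$. Substituting this into your localization sequence $H^i_c\to H^i_\et\to P^i\to H^{i+1}_c$ converts the $H_c$-terms into duals of $H_\et$-groups, not into duals of compactly-supported $H_c$-groups over $\cS$ as required by the statement. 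The sequence that does exist (the paper's \cref{verglohnec} applied to $Rf_!\cF$) has $\hat H^n_c(\cX_\cT,\cF)$ (relative to $\cB$) and $H^n_c(\cX_\cS,\cF)$ (relative to $\cS$) in two different positions, plus local terms that are \emph{compact-support} cohomology of the fibres, not ordinary cohomology; converting between these is exactly where the paper's dualization machinery intervenes. Your proposed triangle $R\Gamma_c(\cX,\cF)\to R\Gamma_\et(\cX,\cF)\to\resprod_v R\hat\Gamma_\et(\cX_v,\cF)$ is not a standard excision triangle (excision at the closed fibres produces local cohomology with supports, not cohomology of the generic fibres, and the necessary purity-type conversion already involves the dualities you want to use later), and the degrees and support types in your plan do not close up to the stated $6r+9$-term sequence.

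\textbf{The hard case (regular but not smooth $\cX$) is treated as a black box.} The pairing you invoke, with $\mu_m$-twists and the shift by $2r+3$, requires identifying $R\IHom(\cF,\mu_m^{\otimes d})[2d]$ with the dualizing complex for $\cX\to\cS$. For a \emph{smooth} $\cX/\cS$ this follows from $Rf^!\Z/m\Z\cong\mu_m^{\otimes r+1}[2r+2]$ and would yield exactly Saito's argument; the paper's entire point is that for merely \emph{regular} $\cX$ one must go through Bloch's cycle complex $\Z^c_\cX(0)$ and prove the cycle class map is a quasi-isomorphism (the paper's \cref{identifyZc}, together with the duality \cref{dualgl} and the base-change property \cref{extnummer2}). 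Your proof proposal never confronts this, so what you have written would not extend beyond the smooth case already handled by Saito.

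\textbf{Circular dependence on \cref{PTdual}.} You use the finiteness of the Shafarevich--Tate groups from \cref{PTdual} to establish the finite-kernel and properness assertions, but in the paper \cref{PTdual} is derived as a \emph{consequence} of \cref{PTseq} (via the relation $\ker\lambda_{i,c}(\cF)\cong\ker\lambda_{2r+3-i}(\cF^\vee(r+1))^\vee$ in the topologically exact sequence). The finiteness of $\ker\lambda_i$ has to be established from within the construction (the paper does this in the proof of \cref{PText}, by compactness of the relevant limit terms), not imported from \cref{PTdual}.

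Finally, the passage from henselizations to completions (\cref{vollstsec}) is needed to get the statement as given with $k_v$ rather than $k_{(v)}$; your sketch does not mention this step.
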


If $\cX$ is a smooth variety over $k$ (i.e., $S$=all places),  \cref{PTseq} was proven by S.~Saito~\cite{Sa}.
His proof combines classical Poitou-Tate duality with the fact that
$
Rf^!(\Z/m\Z)\cong\mu_m^{\otimes d}[2d]
$
for any smooth, geometrically connected morphism $f:X\to Y$ of schemes with $m$ invertible on $Y$ \cite[XVIII Th.\,(3.2.5)]{sga4}. With mild effort, Saito's argument can be extended to the case that $\cX\to \cS$ is \emph{smooth}. The essential new achievement of this paper is that the assumption on $\cX$ can be weakened from smooth to regular. We overcome the technical difficulty with $Rf^!$ by making a detour to algebraic cycle complexes, which have good base change properties by \cite{Ge}. Furthermore, we prove a compactly supported version of \cref{PTseq} and a version that applies to singular schemes and non-invertible coefficients as well.

In our context it is technically more convenient to work with henselizations rather than completions. Therefore we will first prove our results in their henselian versions and will pass to completions in the final \cref{vollstsec}.

The authors thank the anonymous referee for his thorough reading and constructive comments.

\section{Notation and conventions} \label{sec1} In this paper we use the following notation:

\medskip
\begin{compactitem}
\item $k$ a global field, $p=\ch (k)$
\item $k_v$ (resp.\ $k_{(v)}$) the completion (resp.\ henselization) of $k$ at a place $v$
\item $\O_v$ (resp.\ $\O_{(v)}$) the ring of integers of $k_v$ (resp.\ of $k_{(v)}$) (if $v$ is nonarchimedean)
\item $S_\infty$ the set of archimedean places if $k$ is a number field ($S_\infty=\varnothing$ otherwise)
\item $S_\bR\subset S_\infty$ the set of real places
\item $S$ a set of places of $k$ containing $S_\infty$
\item $\O_S$ the ring of $S$-integers in $k$ (if $S\ne \varnothing$)
\item $\cS=\Spec \O_S$, if $p>0$  and $S=\varnothing$ then $\cS$ is the unique regular, complete curve with function field $k$
\item $\cX \to \cS$ a separated scheme of finite type over $\cS$.
\item $X_v=\cX\otimes_{\O_S} k_v$ for a place $v$ of $k$, analogous: $X_{(v)}=\cX\otimes_{\O_S} k_{(v)}$
\item $\cX_v= \cX \otimes_{\O_S} \O_v$ for a nonarchimedean place $v$ of $k$; $\cX_{(v)}= \cX \otimes_{\O_S} \O_{(v)}$
\item $A^\vee$ the Pontryagin dual of a locally compact abelian group
\item  ${\resprod}_{i\in I} (A_i,B_i)$ the restricted product of a family $(A_i)_{i\in I}$ of abelian Hausdorff topological groups with respect to open subgroups $B_i \subset A_i$ given for almost all $i\in I$ (the $B_i$ are omitted when clear from the context)
\item $m$ a natural number invertible on $\cS$
\item $\cF$ an \'{e}tale sheaf of $\Z/m\Z$-modules on $\cX_\et$
\item $\cF^\vee(r)$ the $r$-th Tate twist of the dual sheaf $\IHom(\cF,\Z/m\Z)$
\item $X_\et$ the small \'{e}tale site of a scheme $X$
\item $H^*_\et(X,-)$ the \'{e}tale cohomology $X$
\item $H^*_c(X,-)$ the \'{e}tale cohomology with compact support of a scheme $X$, separated and of finite type over some base scheme $B$
\end{compactitem}

\medskip\noindent
If $v$ is nonarchimedean and the fibre of $\cX$ over $v$ is empty, then $X_v\to \cX_v$ is a scheme isomorphism. However,   $H^*_c(X_v,-)$ and $H^*_c(\cX_v,-)$ differ because the base scheme is different.

The modification of a cohomology theory $H^*$ with respect to the real places is denoted by $\hat H^*$, see \cref{sec2}.

For a scheme $X$, we denote by $\Sh(X_\et)$ the category of sheaves of abelian groups on the small \'{e}tale site on $X$.
We call a complex $\cF^\bullet$ of sheaves in  $\Sh(X_\et)$  bounded if almost all of its cohomology sheaves are zero. We call $\cF^\bullet$ locally constant, torsion, constructible, etc., if all cohomology sheaves have this property.

\section{Modified cohomology} \label{sec2}

In this section we extend the definition of the modified (or \lq\lq compactly supported\rq\rq) \'{e}tale cohomology of an \'{e}tale sheaf on a number ring defined by Th.~Zink \cite{Zi} to bounded complexes of sheaves. Our construction involves a cone. As a cone is only well defined for an actual morphism of complexes (and not for a morphism in the derived category), we work with Godement resolutions to obtain a functorial model for hypercohomology.

\medskip
Let $G$ be a finite group and let $A$ be a $G$-module. We let $A^{tr}$ be the trivial $G$-module with underlying abelian group $A$ and
$
A_1:= \coker (\iota: A \to \Ind^{\{1\}}_G A^{tr}),
$
where $\Ind^{\{1\}}_G A^{tr}$ is the induced module $\bigoplus_{\sigma\in G} A^{tr}_\sigma$ (on which $G$ acts by interchanging the summands) and $\iota(a)= (\sigma a)_\sigma$.
We set $A_0=A$ and recursively $A_{n+1}:=(A_n)_1$ for $n\ge 0$ to obtain a functorial resolution
\begin{equation}\label{Godement1}
A \lang C^\bullet(A), \quad C^n(A)= \Ind^{\{1\}}_G A_n^{tr}
\end{equation}
of $A$ by induced, hence cohomologically trivial $G$-modules. Hence the hypercohomology $\bH(G,A)$ is naturally isomorphic to $C^\bullet(A)^G$.

If $G=\Gal(\bC|\bR)$, we can interpret $G$-modules as sheaves on $(\Spec \bR)_\et$ and (\ref{Godement1}) is nothing  but the Godement resolution
\begin{equation}\label{Godement2}
\cF \lang C^\bullet(\cF),
\end{equation}
 which is defined by $C^n(\cF)=i_*i^*\cF_n$ with $i: \Spec \bC \to \Spec \bR$ the canonical morphism and $
\cF_0=\cF$, $\cF_{n+1}:= \coker  (\cF_n\to i_*i^*\cF_n)$, cf.\ \cite[III, 1.20]{Mi}.

\medskip
Returning to the case of a general finite group $G$, let
$A_{-1}= \ker(\varepsilon: \Ind^{\{1\}}_G A^{tr} \to A)$ with $\varepsilon (a_\sigma)= \sum \sigma^{-1}a_\sigma$. We put $A_0=A$ and recursively $A_{n-1}:=(A_n)_{-1}$ for $n\le 0$  to obtain a left resolution
\begin{equation}\label{Godement3}
 D^\bullet(A)\to A, \quad D^n(A)= \Ind^{\{1\}}_G A_n^{tr}.
\end{equation}
We splice these resolutions together to obtain a \emph{functorial} complete acyclic resolution $\hat C^\bullet(A)$ of $A$
\begin{equation}\label{defhat}
\hat C^n(A)=\left\{\begin{array}{cl}
C^n(A),&n\ge 0,\\
D^{n+1}(A),&n<0,
\end{array}\right.
\end{equation}
which calculates Tate cohomology: $\hat H^n(G,A)=H^n(\hat C^\bullet (A)^G)$, $n\in \Z$.

\begin{definition} Let $G$ be a finite group and let $A^\bullet$ be a bounded complex of $G$-modules. We put
\begin{equation*}
\hat \bH(G,A^\bullet)= \Tot(\hat C^\bullet(A^\bullet))^G, \quad
\hat H^n(G,A^\bullet)=H^n(\hat \bH(G,A^\bullet)), \ n\in \Z.
\end{equation*}
\end{definition}

\noindent
For a bounded complex $A^\bullet$ the spectral sequence of the double complex  $\hat C^\bullet(A^\bullet)^G$
\[
E_2^{st}= \hat H^s(G,H^t(A^\bullet))\Rightarrow \hat H^{s+t}(G,A^\bullet)
\]
converges. Hence for an exact bounded complex $A^\bullet$ also the complex $\hat \bH(A^\bullet)$ is exact. Moreover, the assignment $A^\bullet \mapsto \hat \bH(G,A^\bullet)$ commutes with the operation of taking the cone of a complex homomorphism.
We therefore obtain a functor ``Tate cohomology'' on $\cD^b(G\textrm{-Mod})$, the derived category of bounded complexes of $G$-modules. For a single $G$-module $A$ considered as a complex concentrated in degree zero this gives back the usual Tate cohomology. Note that the natural map $C^\bullet(A)\to \hat C^\bullet(A)$ induces a map $\bH(G,A^\bullet) \to \hat \bH(G,A^\bullet)$.

\medskip
If $G=\Gal(\bC|\bR)$, we can translate this definition into the language of sheaves and obtain the modified (hyper) cohomology \begin{equation}
\hat \bH_\et(\Spec \bR, \cF^\bullet), \quad \hat H^n_\et(\Spec \bR, \cF^\bullet)= H^n(\hat \bH_\et(\Spec \bR, \cF^\bullet))
\end{equation}
for any bounded complex $\cF^\bullet$ of sheaves on $(\Spec \bR)_\et$. We have the hypercohomology spectral sequence
\begin{equation}
E_2^{st}=\hat H^s_\et(\Spec \bR, H^t(\cF^\bullet))\Longrightarrow \hat H^{s+t}_\et(\Spec \bR, \cF^\bullet).
\end{equation}

\begin{definition}
Let $f: X\to \Spec \bR$ be a separated scheme of finite type and let $\cF^\bullet$ be a bounded complex of sheaves on $X_\et$. We define the \emph{modified \'{e}tale hypercohomology} by
\begin{equation*}
\hat \bH_\et(X, \cF^\bullet)=\hat \bH_\et(\Spec \bR, Rf_*\cF^\bullet),
\end{equation*}
and put
\begin{equation*}
\hat H^n_\et(X,\cF^\bullet)= H^n(\hat \bH_\et(X, \cF^\bullet))=\hat H^n_\et(\Spec \bR, Rf_*\cF^\bullet),\  n\in \Z.
\end{equation*}
If $\cF^\bullet$ is torsion, we define the \emph{modified \'{e}tale hypercohomology with compact support} by
\begin{equation*}
\hat \bH_c(X, \cF^\bullet)=\hat \bH_\et(\Spec \bR, Rf_!\cF^\bullet),
\end{equation*}
and put
\begin{equation*}
\hat H^n_c(X,\cF^\bullet)= H^n(\hat \bH_c(X, \cF^\bullet))=\hat H^n_c(\Spec \bR, Rf_!\cF^\bullet),\  n\in \Z.
\end{equation*}
\end{definition}

\noindent
Note that this makes sense since $Rf_*\cF^\bullet,Rf_!\cF^\bullet\in \cD^b(\Sh_\et(\Spec \bR))$.

\medskip
We will also modify the Ext-groups. Let $f: X\to \Spec \bR$ be a separated scheme of finite type and $\cF^\bullet$, $\cG^\bullet$ complexes of sheaves on $X_\et$. Recall that
\begin{equation}
R\Hom_{X}(\cF^\bullet,\cG^\bullet)= \bH_\et(X, R\IHom(\cF^\bullet,\cG^\bullet)).
\end{equation}

\begin{definition} If $R\IHom(\cF^\bullet,\cG^\bullet)$ is bounded, we define
\[
\hat{R}\Hom_{X}(\cF^\bullet,\cG^\bullet)= \hat \bH_\et(X, R\IHom(\cF^\bullet,\cG^\bullet))
\]
and put
\[
\widehat{\Ext}^n_{X}(\cF^\bullet,\cG^\bullet)= H^n(\hat{R}\Hom_{X}(\cF^\bullet,\cG^\bullet)),\ n\in \Z.
\]
\end{definition}

\begin{remark}\label{agree}
Assume that $f: X\to \Spec \bR$ factors through $\Spec \bC$, i.e., $f=ig$ with $g: X\to \Spec \bC$ and $i: \Spec \bC \to \Spec \bR$ the canonical morphism. Then
$Rf_*= i_* \circ Rg_*= \Ind^{\{1\}}_{\Gal(\bC|\bR)} \circ Rg_*$ and hence
\begin{equation*}
\hat H^n_\et(X,\cF^\bullet)= \hat H^n_\et(\Gal(\bC|\bR), Rf_*\cF^\bullet)= \hat H^n_\et(\{1\}, Rg_*\cF^\bullet)= 0. \end{equation*}
The same argument works for cohomology with compact support and for Ext-groups.
\end{remark}

\noindent
\emph{Notational convention:}
For a scheme $X$ over~$\bC$,  we put
\begin{equation}\label{conventionC}
\hat H^n_\et(X,\cF^\bullet) = 0.
\end{equation}
For a scheme $X$ over a non-archimedean local field, we put
\begin{equation}\label{convention}
\hat H^n_\et(X,\cF^\bullet) = H^n_\et(X,\cF^\bullet),
\end{equation}
i.e., the $\hat{\text{ }}$ is redundant. The same convention applies to cohomology with compact support and to Ext-groups.

\bigskip
Next we consider the scheme $\Spec \Z$.
The set $\{\Z\to \bC,\  (\Z \to \bar \F_p)_{p \text{ prime}}\}$ of geometric points is conservative and we consider
the associated Godement resolution of a sheaf  $\cF$ on $(\Spec \Z)_\et$:
\begin{equation}\
\cF \lang C^\bullet(\cF).
\end{equation}
We consider the composite
\begin{equation}
\varphi: \Gamma(\Spec \Z, C^\bullet(\cF)) \to \Gamma(\Spec \bR, C^\bullet(\cF|_{\Spec \bR}))\to   \Gamma(\Spec \bR, \hat C^\bullet(\cF|_{\Spec \bR}))
\end{equation}
\begin{definition} We define
\begin{equation*}
\hat \bH_\et(\Spec \Z,\cF)= \text{cone}(\varphi)[-1].
\end{equation*}
For a bounded complex $\cF^\bullet$ of sheaves on $(\Spec \Z)_\et$, we obtain the double complex $\hat \bH_\et(\Spec \Z, \cF^\bullet)$, which we also consider as a single complex via the total complex functor and set
\begin{equation*}
 \hat H^n_\et(\Spec \Z, \cF^\bullet) = H^n(\hat \bH_\et(\Spec \Z, \cF^\bullet)).
\end{equation*}
\end{definition}

\noindent
For a bounded complex $\cF^\bullet$ of sheaves the spectral sequence of the double complex $\hat \bH_\et(\Spec \Z, \cF^\bullet)$
converges. Hence for an exact bounded complex $\cF^\bullet$ the complex $\hat \bH_\et(\Spec \Z, \cF^\bullet)$ is still exact. Moreover, the assignment $\cF^\bullet \mapsto \hat \bH_\et(\Spec \Z, \cF^\bullet)$ commutes (up to a canonical isomorphism of complexes) with the operation of taking the cone.
Therefore  $\hat H^n_\et(\Spec \Z,-)$ is a functor on $\cD^b(\Sh_\et(\Spec \Z))$ and the following definition makes sense.
\begin{definition}
Let $f: \cX\to \Spec \Z$ be a separated scheme of finite type and $\cF^\bullet$ a bounded complex of sheaves on $X_\et$. We define the \emph{modified \'{e}tale cohomology} by
\begin{equation*}
\hat H^n_\et(\cX,\cF^\bullet)= \hat H^n_\et(\Spec \Z, Rf_*\cF^\bullet) , \ n\in \Z.
\end{equation*}
If\/ $\dim \cX=1$ or $\cF^\bullet$ is torsion  (i.e., if we have a well-defined functor $Rf_!$), we define the \emph{modified \'{e}tale cohomology with compact support} by
\begin{equation*}
\hat H^n_c(\cX,\cF^\bullet)= \hat H^n_\et(\Spec \Z, Rf_!\cF^\bullet), \ n\in \Z.
\end{equation*}
\end{definition}

\noindent
For a single sheaf $\cF$ and $n<0$, the groups $\hat H^n_\et(\cX,\cF)\cong \hat H^{n-1}_\et(X_\bR,\cF)$ are $2$-torsion groups. In general, we have a long exact sequence
\begin{equation}\label{compared}
\cdots \to \hat H^n_\et(\cX,\cF^\bullet) \stackrel{\varphi_n}{\to}  H^n_\et(\cX,\cF^\bullet) \to \hat H^n_\et(X_\bR,\cF^\bullet)\to \hat H^{n+1}_\et(\cX,\cF^\bullet)\to \cdots
\end{equation}
The maps $\varphi_n$ are isomorphisms for all $n\in \Z$ if $\cX\to \Spec \Z$ factors through $\Spec \F_p$ for a prime number $p$, or over $\Spec \O_k$ for a totally imaginary number field~$k$ (cf.\ \cref{agree}). The compact support variant of (\ref{compared}) is the long exact sequence
\begin{equation}\label{comparedc}
\cdots \to \hat H^n_c(\cX,\cF^\bullet) \to  H^n_c(\cX,\cF^\bullet) \to \hat H^n_c(X_\bR,\cF^\bullet)\to \hat H^{n+1}_c(\cX,\cF^\bullet)\to \cdots
\end{equation}
If $\cX= \Spec \O_S$ for a number field $k$ and a finite set $S\supset S_\infty$ of places of $k$, and if $\cF$ is a single sheaf on $\cX_\et$, then our groups $\hat H^*_\et(\cX,\cF)$ coincide with the modified \'{e}tale cohomology groups defined in \cite{Zi}.

\bigskip
Now let $k$ be a global field. For a place $v$ of $k$, we denote by $k_{(v)}$ the henselization of $k$ at $v$. For a set $S$ of places of $k$ we set $\cS= \Spec \O_S$.  If $p=\ch(k)>0$  we make the conventions that

\medskip
\begin{compactitem}
\item $S_\infty=\varnothing$.
\item \lq{$\Spec \O_\varnothing$}\rq\ is the unique smooth, proper curve with function field $k$.
\item cohomology with compact support is defined with respect to the structure morphism to $\Spec \bF_p$.
\item the modification symbol $\hat{\text{ }}$ is redundant.
\end{compactitem}

\medskip\noindent
For sets of places $S\supset T \supset S_\infty$ and $\cT= \Spec \O_T$, we put
\begin{equation}
L^n_T(S, \cF^\bullet)= \bigoplus_{v\in T} \hat H^n(k_{(v)}, \cF^\bullet)  \oplus \bigoplus_{v\in S\smallsetminus T} H^{n+1}_v(\cT,\cF^\bullet).
\end{equation}

The following lemma compares the modified compact support cohomology with ordinary cohomology. It is interesting even for  $S=T$.

\begin{lemma}\label{verglohnec}  Let $S\supset T\supset S_\infty$ be sets of places with $T$ finite and $\cF^\bullet$ a bounded complex of sheaves on $\cT_\et$. Then we have a long exact sequence
\begin{equation*}
\cdots \to \hat H^n_c(\cT,\cF^\bullet) \to H^n_\et(\cS,\cF^\bullet|_\cS) \to L^n_T( S, \cF^\bullet)     \to  \hat H^{n+1}_c(\cT,\cF^\bullet)\to \cdots .
\end{equation*}
\end{lemma}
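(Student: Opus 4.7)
The strategy is to assemble the claimed long exact sequence from three elementary distinguished triangles via two applications of the octahedral axiom; the core geometric input is the vanishing of two connecting maps by a ``disjoint support'' argument. The three input triangles are: (a) the localization triangle for the open immersion $\cS\hookrightarrow\cT$, with nonarchimedean closed complement $\{v\in S\smallsetminus T\}$,
\[
\bigoplus_{v\in S\smallsetminus T} R\Gamma_v(\cT,\cF^\bullet)\to\bH_\et(\cT,\cF^\bullet)\to\bH_\et(\cS,\cF^\bullet|_\cS)\to[1];
\]
(b) the localization triangle for $\cT\hookrightarrow\Spec\O_k$ (resp.\ into the smooth projective completion of $\cS$ if $\ch k>0$), whose complement consists of the nonarchimedean places in $T\smallsetminus S_\infty$,
\[
\bH_c(\cT,\cF^\bullet)\to\bH_\et(\cT,\cF^\bullet)\to\bigoplus_{v\in T\smallsetminus S_\infty}\bH_\et(\Spec k_{(v)},\cF^\bullet)\to[1];
\]
and (c) the defining triangle for the modification from \cref{sec2},
\[
\hat\bH_c(\cT,\cF^\bullet)\to\bH_c(\cT,\cF^\bullet)\to\bigoplus_{v\in S_\bR}\hat\bH_\et(\Spec k_v,\cF^\bullet)\to[1],
\]
where we use $\cT\otimes_\Z\bR=\sqcup_{v\in S_\infty}\Spec k_v$ and the convention $\hat H^*=0$ over $\bC$. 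In the function field case, triangle (c) is trivial and the hat is redundant throughout.

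First apply the octahedral axiom to the composite $\hat\bH_c(\cT)\to\bH_c(\cT)\to\bH_\et(\cT)$. The resulting connecting map $\bigoplus_{v\in T\smallsetminus S_\infty}\bH_\et(\Spec k_{(v)},\cF^\bullet)\to\bigoplus_{v\in S_\bR}\hat\bH_\et(\Spec k_v,\cF^\bullet)[1]$ factors through $\bH_c(\cT,\cF^\bullet)[1]$ and from there through restriction to $\Spec\bR$; since the third term of (b), viewed on $\Spec\O_k$, is supported at nonarchimedean points and hence pulls back to zero over $\Spec\bR$, this composition vanishes in the derived category. Using the conventions $\hat H^n(\Spec k_{(v)},-)=H^n_\et(\Spec k_{(v)},-)$ for nonarchimedean $v$ and $\hat H^n=0$ over $\bC$, the split triangle reads
\[
\hat\bH_c(\cT,\cF^\bullet)\to\bH_\et(\cT,\cF^\bullet)\to\bigoplus_{v\in T}\hat\bH_\et(\Spec k_{(v)},\cF^\bullet)\to[1].
\]

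A second octahedral application, to $\hat\bH_c(\cT)\to\bH_\et(\cT)\to\bH_\et(\cS)$, yields a distinguished triangle whose outer cones are $\bigoplus_{v\in T}\hat\bH_\et(\Spec k_{(v)},\cF^\bullet)$ and $\bigoplus_{v\in S\smallsetminus T}R\Gamma_v(\cT,\cF^\bullet)[1]$. The central point, and really the only nontrivial verification, is that the connecting map between these two sums vanishes in the derived category. It factors as
\[
\bigoplus_{v\in S\smallsetminus T} R\Gamma_v(\cT,\cF^\bullet)[1]\to\bH_\et(\cT,\cF^\bullet)[1]\to\bigoplus_{w\in T}\hat\bH_\et(\Spec k_{(w)},\cF^\bullet)[1],
\]
and for any pair $v\in S\smallsetminus T$, $w\in T$ the right-hand map factors through $\bH_\et(\cT\smallsetminus\{v\},\cF^\bullet)$ (because $\Spec k_{(w)}\to\cT$ avoids the closed point $v$), on which $R\Gamma_v(\cT,\cF^\bullet)$ vanishes by the definition of local cohomology. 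The resulting split triangle identifies its middle term canonically with $L^\bullet_T(S,\cF^\bullet)$, and the long exact cohomology sequence of the distinguished triangle $\hat\bH_c(\cT,\cF^\bullet)\to\bH_\et(\cS,\cF^\bullet|_\cS)\to L^\bullet_T(S,\cF^\bullet)\to[1]$ is the assertion of the lemma. The main obstacle is ensuring the two connecting morphisms vanish in the derived category (not merely on cohomology), which is what forces the middle terms to be genuine direct sums rather than nontrivial extensions.
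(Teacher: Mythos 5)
Your strategy is genuinely different from the paper's: you use two octahedral applications, one to $\hat\bH_c(\cT)\to\bH_c(\cT)\to\bH_\et(\cT)$ and a second to $\hat\bH_c(\cT)\to\bH_\et(\cT)\to\bH_\et(\cS)$, whereas the paper assembles a single $3\times 3$ diagram built around the composite $\hat\bH_c(\cT)\to\bH_c(\cT)\to\bH_\et(\cS)$ (with $\cone(\psi)[-1]$, $\bH_S(\cB,j_!\cF^\bullet)$, $\hat\bH(\cB_\bR,\cF^\bullet)$ as the first column) and must verify only \emph{one} vanishing, namely of $\bH_S(\cB,j_!\cF^\bullet)\to\hat\bH(\cB_\bR,\cF^\bullet)$. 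That vanishing is clean because the whole source is supported at nonarchimedean points of $\cB$, while the target is the restriction to $\cB_\bR$; the paper's disjoint-support argument (``it factors through $\bH(\cS,\cF^\bullet)$'') therefore applies in one stroke. You also gloss over the reduction to $S$ finite (the paper passes to a direct limit first); for infinite $S$, your triangle (a) as stated does not exist.

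The gap in your argument is in the second octahedral step. After the first octahedron you identify $\cone\bigl(\hat\bH_c(\cT)\to\bH_\et(\cT)\bigr)$ with $\bigoplus_{w\in T}\hat\bH_\et(\Spec k_{(w)},\cF^\bullet)$, but the splitting you used is a priori \emph{non-canonical}: the boundary $C_v\to C_u[1]$ vanishes, which gives \emph{a} direct sum decomposition, but only the projection onto the nonarchimedean factor $C_v$ is canonical (it comes from the octahedron axiom). The projection onto the archimedean factor $C_u=\bigoplus_{w\in S_\bR}\hat\bH_\et(\Spec k_w,\cF^\bullet)$ depends on the choice of section, so the component map $\bH_\et(\cT,\cF^\bullet)\to\hat\bH_\et(\Spec k_w,\cF^\bullet)$ for archimedean $w$ is not automatically the restriction along $\Spec k_w\to\cT$ followed by Tate-modification. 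Consequently, your claim that this map ``factors through $\bH_\et(\cT\smallsetminus\{v\},\cF^\bullet)$ because $\Spec k_{(w)}\to\cT$ avoids $v$'' is unjustified for the archimedean summands. The vanishing you need is true, but the correct argument is a lifting one: the map $\bH_{S\smallsetminus T}(\cT,\cF^\bullet)\to\bH_\et(\cT,\cF^\bullet)$ lifts along $\bH_c(\cT,\cF^\bullet)\to\bH_\et(\cT,\cF^\bullet)$ (since $\bH_{S\smallsetminus T}(\cT,\cF^\bullet)\subset\bH_S(\cB,j_!\cF^\bullet)$), and the further composite to $\hat\bH(\cB_\bR,\cF^\bullet)$ vanishes by disjoint supports, so it lifts to $\hat\bH_c(\cT,\cF^\bullet)$; composing two consecutive maps of a triangle then gives zero. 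Alternatively, one can produce a canonical splitting by running the first octahedron through $\hat\bH_c(\cT)\to\hat\bH(\cT)\to\bH(\cT)$ instead, but neither of these repairs is present in your write-up. The paper's choice of composite sidesteps the problem entirely because its cone never mixes archimedean and nonarchimedean contributions.
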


\begin{proof} We can assume that $S$ is finite since the general case follows by passing to the direct limit over all finite $S'$ with $S\supset S'\supset T$.
Put $\cB=\Spec \O_{\varnothing}$ and let $j: \cT \hookrightarrow \cB$ be the open immersion.
Consider the commutative diagram
\[
\begin{tikzcd}
\cone(\psi)[-1]\rar\dar&\hat \bH_c(\cT,\cF^\bullet)\rar{\psi}\dar& \bH(\cS,\cF^\bullet)\arrow[d,equal]\\
\bH_S(\cB,j_!\cF^\bullet)\rar\dar{0}&\bH_c(\cT,\cF^\bullet)\rar\dar& \bH(\cS,\cF^\bullet)\dar\\
\hat \bH(\cB_\bR,\cF^\bullet)\rar&\hat \bH(\cB_\bR,\cF^\bullet)\rar&\,0.
\end{tikzcd}
\]
The first and third row, as well as the second and third column are distinguished triangles by definition. Also the second row is distinguished: it is the excision triangle for $\cS\subset \cB$ and the complex $j_!\cF^\bullet$. We conclude that the first column is distinguished. The lower left vertical arrow is zero since it factors through $\bH(\cS,\cF^\bullet)$. Hence
\begin{equation}\label{triaplus}
\cone(\psi) \cong \bH_S(\cB,j_!\cF^\bullet)[1] \oplus \hat \bH(\cB_\bR,\cF^\bullet).
\end{equation}
Finally note that
\begin{equation}\label{222}
\bH_S(\cB,j_!\cF^\bullet)\cong \bigoplus_{v\in S\smallsetminus S_\infty} \bH_{v}(\cB,j_!\cF^\bullet)
\end{equation}
and that
\begin{equation}\label{221}
\bH_{v}(\cB, j_!\cF^\bullet) \cong \bH(k_{(v)},\cF^\bullet)[-1]
\end{equation}
for $v\in T\smallsetminus S_\infty$.
\end{proof}

\section{The dualizing complex}\label{sec4}
In this section let $B$ be an integral, noetherian and regular scheme of Krull-di\-men\-sion~$\leq 1$.

\begin{definition}\label{dimdef}
For
$f: X\to B$ integral, separated and of finite type, we put $Y=\overline{f(X)}\subset B$ and  define the (absolute) \emph{dimension of\/ $X$} by
\begin{equation*}
\dim X = \hbox{tr.deg}(k(X)/k(Y)) - \codim_B(Y) +\dim_{\text{Krull}} B.
\end{equation*}
\end{definition}

\noindent
The dimension of $X$ coincides with its Krull-dimension~if $X\to B$ is proper or if $\dim B=0$ or if $\dim B=1$ and $B$ has infinitely many closed points. It differs, for example, for $X=\Spec \Q_p$ considered as a scheme over $B=\Spec \Z_p$.

\medskip\noindent
For a scheme $X$, separated and of finite type over $B$ and an integer $n$, let $\Z^c_X(n)$ be Bloch's cycle complex of relative dimension $n$
considered in \cite{Ge}. It is the bounded above complex of sheaves on the small \'{e}tale site of $X$ such that for any \'{e}tale $W\to X$ we have
\begin{equation}
\Z^c_X(n)^i(W)= z_n(W,-i-2n).
\end{equation}
Here $z_q(W,p)$ is the free abelian group generated by integral $(p+q)$-dimensional subschemes of  $\Delta^{p}_W$ that intersect all faces properly. We list some properties of $\Z^c_X(n)$.

\begin{lemma}\label{zc=gm}
If $\dim B=0$, there are natural quasi-isomorphisms
\[
\Z_B^c(0)\cong \Z, \quad \Z^c_B(-1)\cong \G_m[-1].
\]
If\/ $\dim B=1$, there is a natural quasi-isomorphism
$\Z^c_B(0)\cong \G_m[1]$.
\end{lemma}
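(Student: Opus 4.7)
The plan is to compute the cohomology sheaves of $\Z^c_B(n)$ on $B_\et$ case by case, using Bloch's description of codimension-one higher Chow groups on regular schemes. The starting point is \cref{dimdef}: every étale $W\to B$ is regular of the same absolute dimension $d=\dim B$, so equidimensionality gives $z_n(W,p)=z^{d-n}(W,p)$ term by term. Matching the indexing conventions $\Z^c_B(n)^i(W)=z_n(W,-i-2n)$ and $\Z(q)^M(W)^j=z^q(W,2q-j)$ yields the tautological identification of complexes of presheaves on $B_\et$
\[
\Z^c_B(n)\;=\;\Z(d-n)^M_B[2d].
\]

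The three asserted quasi-isomorphisms thus become $\Z(0)^M\simeq \Z$ (case $d=0,n=0$), $\Z(1)^M\simeq \G_m[-1]$ (case $d=0,n=-1$), and $\Z(1)^M[2]\simeq\G_m[1]$ (case $d=1,n=0$). The first is immediate, because $z^0(-,p)$ is the free presheaf on connected components of $\Delta^p_W$ and a short calculation with the alternating-face differential yields $\Z$ concentrated in degree $0$. The third follows formally from the second by shifting. So everything reduces to a single statement: $\Z(1)^M_B\simeq \G_m[-1]$ as complexes of sheaves on $B_\et$, for $B$ regular of Krull dimension $\le 1$.

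To prove this I would compute cohomology sheaves on $B_\et$. Bloch's formulas for codimension-one higher Chow groups on regular schemes---extended to regular arithmetic bases by \cite{Ge}---give $CH^1(W,1)=\Gamma(W,\mathcal{O}_W^\times)$, $CH^1(W,0)=\Pic(W)$, and $CH^1(W,p)=0$ for $p\ge 2$, for every étale $W\to B$. Passing to étale sheaves, the $\mathcal{O}_W^\times$-presheaf is the sheaf $\G_m$, whereas the $\Pic$-presheaf sheafifies to zero on $B_\et$: its stalk at a geometric point $\bar x$ is $\Pic(\mathcal{O}_{B,\bar x}^{\mathrm{sh}})$, which vanishes since a strictly Henselian local ring has trivial Picard group. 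Therefore $\Z(1)^M_B$ has a unique nonzero cohomology sheaf, $\G_m$ in degree $1$, giving the required $\Z(1)^M_B\simeq \G_m[-1]$.

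The main obstacle is the invocation of Bloch's codimension-one formula on an arbitrary regular base, possibly of mixed characteristic, rather than on smooth varieties over a field. This extension is precisely the content of the functoriality and base-change results in \cite{Ge}; once it is available, the remainder of the argument is bookkeeping with shifts and a routine étale sheafification of $\Pic$.
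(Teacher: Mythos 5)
Your reindexing of $\Z^c_B(n)$ as a shift of the codimension-$(d-n)$ cycle complex (valid because every \'etale $W\to B$ is regular and equidimensional of dimension $d=\dim B$) is correct, and this reduction is exactly what the paper does: both arguments boil everything down to the weight-one quasi-isomorphism with $\G_m[-1]$. The paper then cites \cite[Cor.\ 6.4]{Bl} when $B$ lies over a field and \cite[Lemma 11.2]{Le} or \cite[Cor.\ 3.9]{Sch} for the general one-dimensional, mixed-characteristic case. Your further step of computing cohomology sheaves from $CH^1(W,1)=\Gamma(W,\O_W^\times)$, $CH^1(W,0)=\Pic(W)$ (which sheafifies to zero on $B_\et$), and $CH^1(W,p)=0$ for $p\ge 2$ is a correct unwinding of what the weight-one quasi-isomorphism asserts, but it does not circumvent the key input: over a mixed-characteristic regular base those formulas \emph{are} precisely the content of the statement you need, so you must still point to a reference that proves them there. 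That is where your proposal slips: the extension to arithmetic bases is not a consequence of the functoriality and base-change results of \cite{Ge}; the relevant sources are \cite[Lemma 11.2]{Le} and \cite[Cor.\ 3.9]{Sch}. Once the citation is corrected, your argument and the paper's coincide.
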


\begin{proof} If $B$ is a field, then $\Z_B^c(0)\cong \Z$ follows directly from the definition and $\Z^c_B(-1)\cong \G_m[-1]$ follows from \cite[Cor.\ 6.4]{Bl}. If $B$ is a regular curve over a field, then $\Z^c_B(0)\cong \G_m[1]$ follows again from \cite[Cor.\ 6.4]{Bl}. For the general one-dimensional case see \cite[Lemma 11.2]{Le} or \cite[Cor.\ 3.9]{Sch}.
\end{proof}

The definition of $\Z^c_X(n)$ naturally extends to schemes which are filtered inverse limits of \'{e}tale $X$-schemes (e.g., the strict henselization of $X$ at a geometric point). We will call such schemes \'{e}tale essentially of finite type. The dimension of  $W=\varprojlim W_i$ is defined as the common dimension of the $W_i$. We have
\begin{equation}\label{limcycle}
z_q(\varprojlim_i W_i, p)= \varinjlim_i z_q(W_i, q),
\end{equation}
If $\dim B=1$, $K=k(B)$ and $X\to B$ factors through $\Spec K$, then our definition of dimension implies
\begin{equation} \label{twistversch}
\Z^c_{X /B}(n) = \Z^c_{X/K} (n-1)[2].
\end{equation}

\begin{notconv}\label{betaconv}
In the following, we assume that $\beta$ is a natural number such that $k(b)$ is imperfect at most for points $b\in B$ with $\ch k(b)\mid \beta$.
\end{notconv}

If $B$ is the spectrum of the ring of integers of a number field or a $p$-adic field, we can put $\beta=1$. If $\ch (k(B))=p>0$, we can put $\beta=p$.

\begin{theorem}[Localization] \label{localization}
Let $n\le 0$ and $i: Z\hookrightarrow X$ a closed embedding with open complement $j:U\to X$. Then there is a natural isomorphism
\[
Ri^!\Z^c_X(n) \otimes \Z[\tfrac{1}{\beta}]\cong \Z^c_Z(n)\otimes \Z[\tfrac{1}{\beta}]
\]
in the derived category of sheaves on~$Z_\et$. In particular, we have a distinguished triangle
\begin{equation*}
 i_*\Z^c_Z(n) \otimes \Z[\tfrac{1}{\beta}] \to \Z^c_X(n)\otimes \Z[\tfrac{1}{\beta}] \to Rj_*\Z^c_U(n)\otimes \Z[\tfrac{1}{\beta}] \stackrel{[1]}{\to}
\end{equation*}
in the derived category of sheaves on $X_\et$.
\end{theorem}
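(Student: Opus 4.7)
My plan is to first establish the distinguished triangle, and then deduce the $Ri^!$ isomorphism formally. For the triangle, the crucial input is Bloch's localization theorem for higher Chow groups, extended to one-dimensional regular bases.

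Granting the distinguished triangle
\[
i_*\Z^c_Z(n)\otimes\Z[\tfrac{1}{\beta}] \to \Z^c_X(n)\otimes\Z[\tfrac{1}{\beta}] \to Rj_*\Z^c_U(n)\otimes\Z[\tfrac{1}{\beta}] \stackrel{[1]}{\to},
\]
the $Ri^!$ statement is a formal consequence. Since $j^*\Z^c_X(n)=\Z^c_U(n)$, the right-hand arrow identifies with the canonical adjunction map for the complex $\cF:=\Z^c_X(n)\otimes\Z[\tfrac{1}{\beta}]$ sitting in the standard triangle $i_*Ri^!\cF\to\cF\to Rj_*j^*\cF$. Comparing this with the triangle above and using that $i_*$ is fully faithful on derived categories of étale sheaves, we obtain $Ri^!\Z^c_X(n)\otimes\Z[\tfrac{1}{\beta}]\cong\Z^c_Z(n)\otimes\Z[\tfrac{1}{\beta}]$, which is the claimed isomorphism.

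For the triangle, the first map is the sheafification on $X_\et$ of proper pushforward of cycles: for any étale $W\to X$, the closed immersion $W\times_X Z\hookrightarrow W$ induces $z_n(W\times_X Z,\bullet)\to z_n(W,\bullet)$, and these assemble into $i_*\Z^c_Z(n)\to\Z^c_X(n)$. The second map is the adjunction unit, i.e., restriction of cycles to the open locus. That the resulting sequence is a distinguished triangle of complexes of étale sheaves after $\otimes\Z[\tfrac{1}{\beta}]$ is Bloch's localization theorem for the cycle complex $\Z^c(n)$, in the form extended to a one-dimensional regular base in \cite{Ge} (the original case of a field being \cite{Bl}, with refinements in \cite{Le} and \cite{Sch}). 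On stalks at geometric points of $U$ the statement is tautological, while at a geometric point $\bar x$ of $Z$ one uses (\ref{limcycle}) to identify the stalks of $\Z^c_Z(n)$ and $\Z^c_X(n)$ with the cycle complexes of the corresponding strict henselizations, which are then related by the localization quasi-isomorphism.

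The main obstacle, which is however handled in the cited works, is to promote the classical localization quasi-isomorphism of cycle complexes of abelian groups to a distinguished triangle of complexes of étale sheaves after inverting $\beta$. The role of $\beta$ (Convention~\ref{betaconv}) is to circumvent the failure of localization at closed points of imperfect residue characteristic, while the absolute dimension convention of Definition~\ref{dimdef} is arranged so that the relative-dimension index $n$ is preserved under proper pushforward regardless of whether points of $Z$ map to the generic or a closed fibre of $B$; equation (\ref{twistversch}) enters in comparing the two cases. The hypothesis $n\le 0$ ensures boundedness of $\Z^c_X(n)$ and good behavior under the operations involved.
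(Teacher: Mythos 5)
The paper's proof is a one-line citation: ``\cite[Cor.\ 7.2 a)]{Ge} applies with the same proof after tensoring with $\Z[\tfrac{1}{\beta}]$.'' In the end you also defer to \cite{Ge}, so the routes converge, but your expanded intermediate argument has a genuine gap. Your formal deduction of $Ri^!\Z^c_X(n)\otimes\Z[\tfrac1\beta]\cong\Z^c_Z(n)\otimes\Z[\tfrac1\beta]$ from the distinguished triangle is fine: since $j^*i_*=0$, the proper pushforward $i_*\Z^c_Z(n)\to\Z^c_X(n)$ factors through $i_*Ri^!\Z^c_X(n)$, and full faithfulness of $i_*$ plus uniqueness of cones then gives the isomorphism; this inverts the paper's logical order, which is harmless. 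Your description of the two maps is also correct. The problem is the stalk-wise verification of the triangle. At a geometric point $\bar x$ of $Z$, the stalk $(Rj_*\Z^c_U(n))_{\bar x}$ is the \emph{derived} global sections $R\Gamma\bigl(U_{\bar x},\Z^c(n)\bigr)$ over the punctured strict henselization $U_{\bar x}=X_{\bar x}\smallsetminus Z_{\bar x}$, not the naive cycle complex $z_n(U_{\bar x},\bullet)$. Equation (\ref{limcycle}) handles the stalks of $\Z^c_Z(n)$ and $\Z^c_X(n)$, but it does not make $Rj_*$ underived. Identifying $(Rj_*\Z^c_U(n))_{\bar x}$ with the object that Bloch's localization compares to $z_n(X_{\bar x},\bullet)$ is precisely the nontrivial content that \cite{Ge} supplies (and it is established there not by such a stalk computation but via the duality and base-change formalism built up earlier in that paper). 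You partially acknowledge this in your closing paragraph, but the stalk argument as written reads as though the identification were free, which it is not.

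Two smaller remarks. The claim that ``$n\le 0$ ensures boundedness of $\Z^c_X(n)$'' is off: $\Z^c_X(n)$ is bounded above by construction for every $n$; the restriction $n\le 0$ is inherited from \cite{Ge}, where the relevant duality statements and the identification $\Z^c_B(0)\cong\G_m[1]$ hold in that range. Also, the role of $\beta$ is better described as circumventing failures of resolution/alteration-based arguments over imperfect residue fields (the residue fields at closed points of a function-field base are not perfect), rather than a failure of ``localization at closed points of imperfect residue characteristic'' per se; the paper simply tensors \cite[Cor.\ 7.2 a)]{Ge} with $\Z[\tfrac1\beta]$ to sidestep this.
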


\begin{proof}
\cite[Cor.\ 7.2 a)]{Ge} applies with the same proof after tensoring with $\Z[\tfrac{1}{\beta}]$.
\end{proof}

\begin{theorem}[Duality]\label{dualgl} Let $f:X\to B$ be separated and of finite type and let $\cF^\bullet$ be a bounded torsion complex of  sheaves on $X_\et$.
Then, for $n\leq 0$, we have a natural isomorphism in the derived category of abelian groups
\[
R\Hom_X(\cF^\bullet,\Z^c_X(n)\otimes \Z[\tfrac{1}{\beta}] ) \cong R\Hom_B(Rf_! \cF^\bullet, \Z^c_B(n)\otimes \Z[\tfrac{1}{\beta}]).
\]
\end{theorem}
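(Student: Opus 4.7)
The plan is to follow the standard Grothendieck--Verdier factorization strategy, viewing $\Z^c_X(n)\otimes\Z[\tfrac{1}{\beta}]$ as the relative dualizing complex for $f$. By Nagata compactification and embedding into a projective bundle, factor $f = \pi \circ \iota \circ j$, where $j : X \hookrightarrow \bar X$ is an open immersion, $\iota : \bar X \hookrightarrow \bP^N_B$ is a closed immersion, and $\pi : \bP^N_B \to B$ is the structure morphism. Since $Rf_! = R\pi_* \circ \iota_* \circ j_!$, it suffices to verify the duality separately for each of $j$, $\iota$, and $\pi$.

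For the open immersion $j$, the adjunction $j_! \dashv j^*$ together with the tautology $j^* \Z^c_{\bar X}(n) = \Z^c_X(n)$ (immediate from the cycle-complex definition via \'etale restriction) gives the duality at this step. For the closed immersion $\iota$, the localization \cref{localization} identifies $R\iota^!(\Z^c_{\bP^N_B}(n) \otimes \Z[\tfrac{1}{\beta}])$ with $\Z^c_{\bar X}(n) \otimes \Z[\tfrac{1}{\beta}]$; combined with the adjunction $\iota_* \dashv R\iota^!$ and the identity $R\iota_! = \iota_*$, this yields the duality at the closed-immersion step.

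The hard part is the projective-bundle case $\pi : \bP^N_B \to B$, for which one needs a trace map
\[
\tr_\pi : R\pi_* \Z^c_{\bP^N_B}(n) \otimes \Z[\tfrac{1}{\beta}] \lang \Z^c_B(n) \otimes \Z[\tfrac{1}{\beta}]
\]
realizing the adjunction $R\pi_* \dashv R\pi^!$, equivalently an identification $R\pi^! \Z^c_B(n) \otimes \Z[\tfrac{1}{\beta}] \cong \Z^c_{\bP^N_B}(n) \otimes \Z[\tfrac{1}{\beta}]$. This follows from the projective-bundle formula for Bloch's cycle complex combined with homotopy invariance, both of which behave uniformly after inverting $\beta$. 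The passage from a general bounded torsion complex $\cF^\bullet$ to a single torsion sheaf is then a standard d\'evissage via the truncation triangles, which are exact on both sides of the asserted isomorphism.

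The overall obstacle is that the analogous cycle-theoretic duality established in \cite{Ge} is proved under hypotheses excluding imperfect residue fields; the role of the factor $\Z[\tfrac{1}{\beta}]$ in the statement is precisely to suppress the primes where $\Z^c_X(n)$ can misbehave, so that the argument of \cite{Ge} goes through verbatim after tensoring with $\Z[\tfrac{1}{\beta}]$. I therefore expect the actual proof in the paper to be a brief reference to \cite{Ge} noting this extension, in the same spirit as the proof of \cref{localization} above.
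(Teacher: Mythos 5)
Your final paragraph is exactly right: the paper's proof is a one-line citation of \cite[Cor.\ 4.7~b), Thm.\ 7.3]{Ge} for the single-sheaf case, with the remark that the same argument works for general $B$ after inverting $\beta$, followed by the standard hypercohomology-spectral-sequence d\'evissage to bounded complexes. Your Grothendieck--Verdier factorization sketch (open immersion via $j_!\dashv j^*$, closed immersion via \cref{localization} and $\iota_*\dashv R\iota^!$, projective bundle via a trace map) is a faithful gloss on how the cited result in \cite{Ge} is actually established, so nothing essential is missing.
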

\begin{proof} \cite[Cor.\ 4.7 b), Thm.\ 7.3]{Ge}  shows that
\begin{equation}
R\Hom_X(\cF,\Z^c_X (n)) \cong R\Hom_B(Rf_! \cF, \Z^c_B(n))
\end{equation}
for a single torsion sheaf $\cF$, if $B$ is the spectrum of a perfect field or of a Dedekind domain of characteristic zero with perfect residue fields. Without the perfectness assumption, the same proof shows  the isomorphism for general $B$ after inverting~$\beta$. Finally, this extends in a straightforward manner to the case of a bounded complex~$\cF^\bullet$.
\end{proof}

\begin{corollary}[Modified duality]\label{modifieddual}  Assume that $B=\Spec \bR$ in \cref{dualgl}.
Then, for $n\leq 0$, we have a natural isomorphism in the derived category of abelian groups
\begin{equation*}
\hat{R}\Hom_X(\cF^\bullet,\Z^c_X(n) ) \cong \hat{R}\Hom_{\Spec \bR}(Rf_! \cF^\bullet, \Z^c_{\Spec \bR}(n)).
\end{equation*}
\end{corollary}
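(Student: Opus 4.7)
The plan is to upgrade \cref{dualgl} from an isomorphism in the derived category of abelian groups to an isomorphism in $\cD^b(\Sh_\et(\Spec\bR))$, and then to apply the triangulated functor $\hat \bH_\et(\Spec \bR, -)$ to both sides. Since $B=\Spec\bR$ has characteristic zero and perfect residue field, we may take $\beta=1$, so the $\Z[\tfrac{1}{\beta}]$-localisations play no role.

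Concretely, the duality isomorphism behind \cref{dualgl} is built, via the cited results \cite[Cor.\,4.7\,b), Thm.\,7.3]{Ge}, from the adjunction $Rf_!\dashv Rf^!$ together with the purity quasi-iso\-mor\-phism $Rf^!\Z^c_{\Spec\bR}(n)\cong \Z^c_X(n)$. Both ingredients are formulated at the level of complexes of sheaves, so the same construction produces a natural isomorphism
\[
Rf_* R\IHom(\cF^\bullet,\Z^c_X(n)) \;\cong\; R\IHom(Rf_!\cF^\bullet,\Z^c_{\Spec\bR}(n))
\]
in $\cD^b(\Sh_\et(\Spec\bR))$, from which \cref{dualgl} itself is recovered by applying $R\Gamma(\Spec\bR,-)$ and passing to cohomology.

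Now apply $\hat \bH_\et(\Spec\bR,-)$, which by the discussion preceding \cref{agree} is a well-defined triangulated functor on $\cD^b(\Sh_\et(\Spec\bR))$ and hence preserves isomorphisms. Unwinding
\[
\hat{R}\Hom_X(\cF^\bullet,\Z^c_X(n))=\hat \bH_\et\bigl(X,R\IHom(\cF^\bullet,\Z^c_X(n))\bigr)= \hat \bH_\et\bigl(\Spec\bR,\, Rf_* R\IHom(\cF^\bullet,\Z^c_X(n))\bigr)
\]
and the analogous identity on the right-hand side then yields the assertion. The sole step requiring care, and the only obstacle I foresee, is verifying the sheaf-level formulation of \cref{dualgl} used in the first display; this is a standard feature of the six-functor construction underlying \cite{Ge}, but must be extracted from the proofs there rather than from the statements alone.
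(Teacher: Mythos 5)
Your approach is essentially the same as the paper's: the key step in both is to upgrade the derived-category-of-abelian-groups isomorphism of \cref{dualgl} to one that carries a $\Gal(\bC|\bR)$-action — which is exactly what you call a sheaf-level isomorphism in $\cD^b(\Sh_\et(\Spec\bR))$ — and then apply the triangulated Tate/modified cohomology functor $\hat\bH_\et(\Spec\bR,-)$. The paper states this by invoking the isomorphism of \cref{dualgl} over $\Spec\bC$ and observing (citing the proof of \cite[Thm.\,4.1]{Ge}) that it is $\Gal(\bC|\bR)$-equivariant; this is the same content as your first display. One small point you assert but do not justify: for $\hat\bH_\et(\Spec\bR,-)$ to be applicable, the relevant $R\IHom$ (equivalently, $R\Hom$ over $\bC$) complexes must be \emph{bounded}, and the paper cites \cite[Lemma 4.8]{Ge} for exactly this; you should add that reference rather than just placing the isomorphism in $\cD^b$ without comment.
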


\begin{proof}
Let $X_\bC= X\times_\bR \bC$.  The isomorphism
\begin{equation}\label{dualglueberc}
R\Hom_{X_\bC}(\cF^\bullet|_{X_\bC},\Z^c_{X_\bC}(n) ) \cong R\Hom_{\Spec \bC}(Rf_! \cF^\bullet|_{X_\bC}, \Z^c_{\Spec \bC}(n))
\end{equation}
of \cref{dualgl} for $n\le 0$ is $\Gal(\bC|\bR)$-invariant (cf.\ the proof of \cite[Thm.\ 4.1)]{Ge}). By \cite[Lemma 4.8]{Ge} the complexes in (\ref{dualglueberc}) are bounded. Hence we can apply $\hat\bH(\Gal(\bC|\bR),-)$ to obtain the result.
\end{proof}

Working with sheaves of $\Z/m\Z$-modules, we obtain a dimension shift by one by the following lemma.

\begin{lemma}\label{wechselauftorsion}
Let $\cF^\bullet$ be a bounded complex of sheaves of\/ $\Z/m\Z$-modules on~$X_\et$. Then we have a natural isomorphism in the derived category of abelian groups
\begin{equation*}
R\Hom_{X,\Z/m\Z}(\cF^\bullet, \Z^c_X(n)/m)[-1]\cong R\Hom_X(\cF^\bullet, \Z^c_X(n)).
\end{equation*}
\end{lemma}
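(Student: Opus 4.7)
The strategy is to exploit that $\cF^\bullet$ consists of $\Z/m\Z$-modules by invoking the derived adjunction between the forgetful functor from sheaves of $\Z/m\Z$-modules to sheaves of abelian groups and its right adjoint $\IHom_X(\Z/m\Z,-)$. Concretely, I would first record the natural isomorphism
\[
R\Hom_X(\cF^\bullet, \Z^c_X(n)) \;\cong\; R\Hom_{X,\Z/m\Z}\bigl(\cF^\bullet,\, R\IHom_X(\Z/m\Z, \Z^c_X(n))\bigr),
\]
which is the derived version of the standard adjunction $\Hom_{\text{Ab}}(M,A)=\Hom_{\Z/m\Z}(M,A[m])$ for a $\Z/m\Z$-module $M$; it passes to complexes and to sheaves in the usual way.

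The second step is to compute the inner term. Applying $R\IHom_X(-,\Z^c_X(n))$ to the short exact sequence $0\to\Z\xrightarrow{m}\Z\to\Z/m\Z\to0$ yields a distinguished triangle
\[
R\IHom_X(\Z/m\Z, \Z^c_X(n)) \to \Z^c_X(n) \xrightarrow{m} \Z^c_X(n) \xrightarrow{[1]}.
\]
Now each term of Bloch's complex $\Z^c_X(n)$ is the free abelian sheaf associated to the presheaf $W\mapsto z_n(W,-i-2n)$, hence multiplication by $m$ is term-wise injective with cokernel $\Z^c_X(n)/m$. Thus the fiber of the multiplication-by-$m$ map is quasi-isomorphic to $\Z^c_X(n)/m[-1]$, giving
\[
R\IHom_X(\Z/m\Z, \Z^c_X(n)) \;\cong\; \Z^c_X(n)/m\,[-1].
\]

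Substituting this identification into the first isomorphism yields
\[
R\Hom_X(\cF^\bullet, \Z^c_X(n)) \;\cong\; R\Hom_{X,\Z/m\Z}\bigl(\cF^\bullet, \Z^c_X(n)/m\bigr)[-1],
\]
which is the claim. There is no real obstacle here; the only points requiring care are the verification that Bloch's complex is term-wise $m$-torsion free (so that the derived tensor product with $\Z/m\Z$ coincides with the naive quotient) and the bookkeeping of the shift by $[-1]$ coming from the fiber sequence. Both are routine in the sheafified derived setting.
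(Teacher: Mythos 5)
Your proof is correct and matches the approach the paper points to (it cites [Ge, Lemma 2.4], whose proof is exactly this adjunction between restriction of scalars along $\Z\to\Z/m\Z$ and $R\IHom_X(\Z/m\Z,-)$, combined with the resolution $\Z\xrightarrow{m}\Z$ of $\Z/m\Z$). Your extra remark that the terms of $\Z^c_X(n)$ are torsion-free — so that the naive quotient $\Z^c_X(n)/m$ coincides with $\cone(\cdot m)$ — is precisely the one point that is specific to the cycle complex, and correctly isolated.
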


\begin{proof}
This is standard homological algebra and has nothing to do with algebraic cycle complexes in particular. The proof of \cite[Lemma 2.4]{Ge} applies without change.
\end{proof}

Let $X$ be a regular scheme which is \'{e}tale essentially of finite type over $B$ and equidimensional of dimension~$d$. Let $m$ be a natural number invertible on $B$. Then, for $\dim B=0$, the \'{e}tale cycle class map $c_X$ is defined in \cite{GL}. It is a natural homomorphism
\begin{equation}\label{cycle1}
c_X: \Z^c_X (n)/m \lang \mu_m^{\otimes d-n}[2d]
\end{equation}
of complexes of sheaves on $X_\et$. We recall:
\begin{theorem}[{\cite[Thm.\ 1.5]{GL}}]\label{identifyZc0} If $\dim B=0$ and $X$ is regular, then $c_X$ is a quasi-isomorphism for $n\leq d$.
\end{theorem}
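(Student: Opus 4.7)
The plan is to reduce the statement to the Suslin--Voevodsky comparison between Bloch's higher Chow complex modulo $m$ and Tate twists of roots of unity on the étale site, which holds for smooth schemes over a field when $m$ is invertible.

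First I would recast the claim in terms of the codimension cycle complex. For $X$ equidimensional of dimension $d$ over $B$ one has the tautological identity $z_n(W,p) = z^{d-n}(W,p)$ for every étale $W\to X$, which gives a canonical quasi-isomorphism
\[
\Z^c_X(n) \simeq \Z(d-n)_X[2d]
\]
where $\Z(q)$ denotes Bloch's codimension-$q$ cycle complex, cohomologically normalized so that $\Z(q)^i(W) = z^q(W,2q-i)$. Setting $q = d-n \ge 0$, the assertion that $c_X$ is a quasi-isomorphism becomes the statement that on the small étale site $X_\et$ the natural map
\[
\Z(q)/m \lang \mu_m^{\otimes q}
\]
is a quasi-isomorphism for every $q\ge 0$.

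Next I would reduce to the case where $B$ is a perfect field. Étale cohomology with $\Z/m$-coefficients is insensitive to purely inseparable base change on $B$ (since $m$ is invertible on $B$ and purely inseparable morphisms are universal homeomorphisms), and the cycle complex modulo $m$ has analogous base-change invariance by the results used in \cref{localization} and \cref{dualgl}. After replacing $B$ by its perfect closure, $X$ regular and essentially of finite type over $B$ becomes smooth over $B$.

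The core step is to verify the quasi-isomorphism stalkwise on $X_\et$. Fixing a geometric point $\bar x \to X$, the identification (\ref{limcycle}) shows that the stalk of $\Z(q)/m$ at $\bar x$ is the cycle complex modulo $m$ of the strict henselization $\Spec \O^{sh}_{X,\bar x}$. Suslin's rigidity theorem for motivic cohomology with invertible torsion coefficients then identifies this stalk with the motivic cohomology modulo $m$ of the separably closed residue field $k(\bar x)$; by the Suslin--Voevodsky calculation (combining the Nesterenko--Suslin--Totaro isomorphism $H^q(\Z(q)(F))\cong K^M_q(F)$ with Hilbert~90 and Bloch--Kato for weights $\le q$), the complex $\Z(q)/m$ over a separably closed field of characteristic prime to $m$ is concentrated in degree~$0$ with cohomology $\mu_m^{\otimes q}$, which matches the stalk of $\mu_m^{\otimes q}$ at $\bar x$.

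The main obstacle is precisely the stalkwise step. Suslin's rigidity for motivic cohomology, and the explicit computation over a separably closed field, form the substantive input and depend in turn on the $\bA^1$-homotopy invariance and Mayer--Vietoris properties of Bloch's complex on smooth schemes and on the identification of $H^q(\Z(q)(F))$ with Milnor $K$-theory. Modulo these two deep ingredients, the remaining reductions in Steps~1 and~2 are formal.
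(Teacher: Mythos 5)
The paper does not prove this statement: it is cited verbatim from Geisser--Levine \cite[Thm.~1.5]{GL}, so there is no internal proof to compare against. Your sketch is a plausible reconstruction of a proof, and the re-indexing $\Z^c_X(n)\simeq\Z(d-n)_X[2d]$ and the stalkwise reduction to separably closed fields via rigidity for homotopy-invariant presheaves with transfers are both reasonable strategies. The substantive inputs you invoke (Suslin--Voevodsky rigidity, Nesterenko--Suslin--Totaro, Bloch--Kato) are the same ones that drive the Geisser--Levine argument, which reduces to fields via the Gersten/coniveau machinery rather than literally taking \'etale stalks; the two reductions are close in spirit.

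There is, however, a genuine gap in your Step~2. Replacing $B$ by its perfect closure $B^{\mathrm{perf}}$ and base-changing $X$ does \emph{not} preserve regularity. A simple example: $B=\Spec\F_p(t)$ and $X=\Spec\F_p(t^{1/p})$, which is regular over $B$ (it is a field), but $X\times_B B^{\mathrm{perf}}$ has nilpotents and hence is not regular. More generally, regular but non-smooth varieties over imperfect fields typically become singular after inseparable base change, so the assertion ``$X$ regular becomes smooth over $B^{\mathrm{perf}}$'' fails. The correct route is different: one should either (a) pass to the reduced scheme $(X\times_B B^{\mathrm{perf}})_{\mathrm{red}}$, which is universally homeomorphic to $X$ so that the \'etale topos and the mod~$m$ cycle complex are unaffected, but then one must still argue that this reduced scheme is smooth, which is not automatic; or (b) observe directly that the \'etale stalk $\O^{\mathrm{sh}}_{X,\bar x}$ of a regular $X$ is a regular, equicharacteristic, strictly henselian local ring, and invoke N\'eron--Popescu desingularization to realize it as a filtered colimit of smooth algebras over the \emph{prime} field (which is perfect), then apply the smooth/perfect case together with continuity of $\Z(q)$. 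As it stands, the reduction you wrote down is incorrect, and since the rest of your argument hinges on having reduced to the smooth/perfect case, this needs to be repaired before the sketch goes through.

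One further caveat: the transfer structure and $\bA^1$-invariance needed for the rigidity step are established for Bloch's complex on \emph{smooth} schemes over a field, so the order of the reductions matters --- one should reduce to smooth/perfect \emph{before} invoking rigidity, not after. Your draft applies rigidity to the strict henselization of a possibly merely regular $X$, where the hypotheses of the rigidity theorem are not directly available.
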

The construction of $c_X$ extends to the case $\dim B=1$ as explained in \cite[\S12.3]{Le}.
If $X\to B$ factors through $\Spec K$, $K=k(B)$, then the cycle class maps over $B$ and over $K$ are compatible, i.e., the diagram
\begin{equation}
\begin{tikzcd}[column sep=large]
\Z^c_{X/B}(n)/m\rar{c_{X\to B}}\arrow[d,"="]& \mu_m^{\otimes d-n }[2d]\arrow[d,"="]\\
\Z^c_{X/K}(n-1)/m\,[2]\rar{c_{X\to K}[2] }& \mu_m^{\otimes d-n} [2d]
\end{tikzcd}
\end{equation}
commutes (note that $X$ has dimension $d-1$ as a scheme over $K$).
\begin{theorem}\label{identifyZc} Assume $\dim B=1$ and that $X$ is \'{e}tale essentially of finite type over~$B$, regular connected and of dimension~$d$. Let $m$ be a natural number invertible on $X$. Then the cycle class map
\begin{equation*}
c_X: \Z^c_X(n)/m \lang \mu_m^{\otimes d-n}[2d].
\end{equation*}
is a quasi-isomorphism for $n\le 0$.
\end{theorem}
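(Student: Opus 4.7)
The plan is to reduce to the $\dim B = 0$ case \cref{identifyZc0} via the twist formula (\ref{twistversch}) and the localization triangle of \cref{localization}. Since the statement concerns a morphism of complexes of sheaves on $X_\et$, by (\ref{limcycle}) it suffices to verify $c_X$ is a quasi-isomorphism stalkwise, i.e., after base-changing to each strictly henselian local ring $A = \O_{X, \bar x}^{sh}$ at a geometric point $\bar x$ of $X$. As $A$ is strictly henselian, the stalk of $\mu_m^{\otimes d-n}[2d]$ is concentrated in degree $-2d$.

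Being regular and connected, $X$ is integral; its image in $B$ is either a single closed point $b$ or all of $B$. In the first case $X\to B$ factors through $\Spec k(b)$ and, by \cref{dimdef}, the absolute dimension of $X$ over $B$ equals its dimension over $k(b)$, so \cref{identifyZc0} applies directly. In the second case, integrality of $X$ forces $X\to B$ to be flat (a uniformizer of $\O_{B,b}$ is a non-zerodivisor on the local rings of $X$, since they embed into $k(X)$). For $\bar x$ over the generic point $\eta$ of $B$, every point of $\Spec A$ specializes to $\bar x$, so $\Spec A\to B$ factors through $\Spec K$; then (\ref{twistversch}) gives $\Z^c_{A/B}(n)=\Z^c_{A/K}(n-1)[2]$, and since $A$ is essentially smooth of Krull-dimension $d-1$ over $K$ with $n-1\le -1\le d-1$, \cref{identifyZc0} yields $\Z^c_{A/K}(n-1)/m\cong\mu_m^{\otimes d-n}[2(d-1)]$; shifting by $[2]$, this agrees with $c_A$ via the compatibility diagram stated just before the theorem.

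For $\bar x$ over a closed point $b\in B$, let $\pi\in\O_{B,b}$ be a uniformizer; flatness makes $\pi$ a non-zerodivisor on $A$. Since $m$ is invertible on $X$, $\Z/m\Z$ is a $\Z[\tfrac{1}{\beta}]$-module, so \cref{localization} applies mod $m$ to the closed immersion $i:\Spec(A/\pi A)\hookrightarrow \Spec A$ with open complement $j:\Spec A[\pi^{-1}]\hookrightarrow \Spec A$, giving a distinguished triangle
\[
i_*\Z^c_{A/\pi A}(n)/m\ \lang\ \Z^c_{\Spec A}(n)/m\ \lang\ Rj_*\Z^c_{A[\pi^{-1}]}(n)/m.
\]
The right-hand term is handled by the previous case pointwise on $\Spec A[\pi^{-1}]$. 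Comparing with the analogous distinguished triangle $i_*Ri^!\mu_m^{\otimes d-n}[2d]\to\mu_m^{\otimes d-n}[2d]\to Rj_*j^*\mu_m^{\otimes d-n}[2d]$ via $c_A$, one is reduced to identifying $\Z^c_{A/\pi A}(n)/m$ with $Ri^!\mu_m^{\otimes d-n}[2d]$ on $\Spec(A/\pi A)$, compatibly with $c$.

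The main obstacle is this last identification: the special fibre $\Spec(A/\pi A)$ is Cohen-Macaulay but generally not regular (it is regular iff $\pi$ lies outside $\mathfrak{m}_A^2$), so neither \cref{identifyZc0} nor Gabber-Fujiwara absolute purity in the regular-pair form applies directly. I would handle this by Noetherian induction on the dimension of the singular locus of $\Spec(A/\pi A)$, iterating the localization argument after further shrinking and noting that on the dense open regular locus \cref{identifyZc0} applies; equivalently, by invoking Gersten-type resolutions on both sides to reduce the check to DVR stalks at codimension-one points of $X$ contained in the special fibre, where the statement follows at once from \cref{identifyZc0}.
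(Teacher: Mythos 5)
Your outline captures the two key ingredients the paper also uses — the localization triangle of \cref{localization} together with Gabber--Fujiwara absolute purity as a pair of compatible dévissages, with \cref{identifyZc0} as the base case — so the approaches are closely related. The main difference is where the dévissage is anchored. The paper's proof (following Levine) reduces to $B$ strictly henselian, keeps the ambient regular scheme $X$, and from the start takes $i\colon Z\hookrightarrow Y$ to be a \emph{regular} closed subscheme of an open $Y\subset X$, obtained by stratifying the special fibre of $X$ into regular locally closed pieces $T_i\smallsetminus T_{i+1}$; this way both terms of each pair $(Z,Y)$ are regular, absolute purity applies literally, $Z$ lives over a residue field so \cref{identifyZc0} applies, and a two-of-three induction finishes the proof without ever having to interpret $Ri^!\mu_m^{\otimes d-n}[2d]$ for a non-regular $Z$. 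Your version instead passes to stalks (which is sound but doesn't simplify anything, since you then run a sheaf-level dévissage on $\Spec A$ anyway) and applies the localization triangle directly to the full special fibre $\Spec(A/\pi A)\hookrightarrow\Spec A$. You correctly diagnose that this breaks purity — and note also that $\Spec(A/\pi A)$ can even fail to be reduced, not just to be regular — but the remedy you sketch in option (a), ``iterate after shrinking to the regular locus,'' is exactly the stratification the paper builds in from the outset; it is the right idea but needs to be restructured so that each localization step involves a regular pair inside the regular ambient, rather than attempting to identify $\Z^c_{A/\pi A}(n)/m$ with $Ri^!\mu_m^{\otimes d-n}[2d]$ on a singular scheme and then repairing it. Option (b), reducing via Gersten-type resolutions to DVR stalks at codimension-one points, is not ``equivalent'' and I don't see how it closes cleanly: the comparison one needs is not a statement at codimension-one points of $X$ but the full sheaf-level identification, and Gersten-type arguments here would require exactly the kind of local purity input you are trying to establish. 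So: right skeleton, correct identification of the gap, but the inductive step should be reorganized as in the paper (regular strata inside the regular ambient) rather than patched after the fact.
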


\begin{remark}
One expects that $c_X$ is a quasi-isomorphism for $n\le d$.
\end{remark}

\begin{proof}[Proof of \cref{identifyZc}] The proof follows \cite[Thm.\ 12.5]{Le}. We can assume that $B$ is local and strictly henselian.
Let $Y$ be essentially of finite type over $B$, $i: Z\to Y$ be a regular
closed subscheme and $j: W\to Y$ its open complement.  We have a map
of distinguished triangles of complexes of sheaves on~$Y_\et$,
\begin{equation}
\begin{tikzcd}
i_* \Z_Z^c(n)/m \rar\dar{c_Z} & \Z^c(n)/m \rar\dar{c_Y} & Rj_* \Z^c_W(n)/m\dar{c_W} \\
i_* \mu_n^{\otimes d-n}[2d]_Z \rar& \mu_n^{\otimes d-n}[2d]
 \rar & Rj_*\mu_m^{\otimes d-n}[2d]_W.
\end{tikzcd}
\end{equation}
Indeed, the upper row is distinguished by \cref{localization},
the lower row by purity of \'{e}tale cohomology \cite{gabber}, and
the commutativity of the diagram follows from the definition of
the cycle class map, see \cite[\S12.3]{Le}.

We see that if the result holds for two of $W,Z,Y$, then it holds
for the third. The special fibre of $Y$ has a
finite stratification by closed subsets $T_i$ such that $T_i-T_{i+1}$
is regular. Thus the result follows by induction on $i$ from \cref{identifyZc0}.
\end{proof}

The following lemma will be used in \cref{proofsect}.
\begin{lemma}\label{extdegen}
Let $\cF$ be a locally constant, constructible sheaf of\/ $\Z/m\Z$-modules on~$X_\et$, where $m$ is invertible on $X$. Then, for $n\ge 0$, we have  natural isomorphisms  in the derived category of abelian groups
\begin{equation*}
R\Hom_{X,\Z/m\Z}(\cF, \mu_m^{\otimes n})\cong R\Gamma(X,\IHom_{X,\Z/m\Z}(\cF, \mu_m^{\otimes n})).
\end{equation*}
\end{lemma}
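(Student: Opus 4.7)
The plan is to establish the stronger statement that $R\IHom_{X,\Z/m\Z}(\cF,\mu_m^{\otimes n})$ is quasi-isomorphic to the underived internal hom $\IHom_{X,\Z/m\Z}(\cF,\mu_m^{\otimes n})$ placed in degree zero, i.e., that the higher sheaf Ext sheaves $\IExt^q_{X,\Z/m\Z}(\cF,\mu_m^{\otimes n})$ vanish for all $q\ge 1$. Granting this, the general identity $R\Hom = R\Gamma\circ R\IHom$ immediately yields
\[
R\Hom_{X,\Z/m\Z}(\cF,\mu_m^{\otimes n})\cong R\Gamma\bigl(X,R\IHom_{X,\Z/m\Z}(\cF,\mu_m^{\otimes n})\bigr)\cong R\Gamma\bigl(X,\IHom_{X,\Z/m\Z}(\cF,\mu_m^{\otimes n})\bigr),
\]
which is the claim.

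To verify the vanishing of the sheaf Ext in positive degrees, I would argue stalk-wise at geometric points $\bar x\to X$. Since $m$ is invertible on $X$, the sheaf $\mu_m^{\otimes n}$ is locally constant with stalks isomorphic to $\Z/m\Z$ as a $\Z/m\Z$-module, and $\cF$ is locally constant constructible with finite $\Z/m\Z$-module stalks. Resolving $\cF$ \'{e}tale-locally by finitely generated free $\Z/m\Z$-modules (which is possible on a sufficiently small \'{e}tale neighbourhood of $\bar x$ where $\cF$ becomes constant) yields the standard identification
\[
\IExt^q_{X,\Z/m\Z}(\cF,\mu_m^{\otimes n})_{\bar x}\;\cong\;\Ext^q_{\Z/m\Z}(\cF_{\bar x},\Z/m\Z).
\]

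The key algebraic input is that $\Z/m\Z$ is a self-injective ring: by the Chinese remainder theorem it decomposes as $\prod_{p\mid m}\Z/p^{v_p(m)}\Z$, and each Artinian local factor is Gorenstein of Krull dimension zero, hence self-injective as a module over itself. Therefore $\Ext^q_{\Z/m\Z}(M,\Z/m\Z)=0$ for every $\Z/m\Z$-module $M$ and every $q\ge 1$. Applied stalkwise to $M=\cF_{\bar x}$, this gives the desired vanishing of $\IExt^q_{X,\Z/m\Z}(\cF,\mu_m^{\otimes n})$ for $q\ge 1$. No substantial obstacle is expected; the only point requiring care is the local-to-global reduction identifying the stalks of the sheaf Ext with the module Ext of the stalks, which is routine in the locally constant constructible setting with invertible torsion coefficients.
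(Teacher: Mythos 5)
Your proof is correct and follows essentially the same route as the paper: show the sheaf-Ext $\IExt^q_{X,\Z/m\Z}(\cF,\mu_m^{\otimes n})$ vanishes for $q\ge 1$ by computing stalks and invoking the self-injectivity of $\Z/m\Z$, then deduce the statement from $R\Hom = R\Gamma\circ R\IHom$ (the paper phrases this last step as degeneration of the local-to-global spectral sequence, which is the same thing).
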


\begin{proof}
We consider the local-to-global spectral sequence
\[
E_2^{ij} = H^i_\et(X,\IExt^j_{X,\Z/m\Z}(\cF,\mu_m^{\otimes n}))\Rightarrow \Ext^{i+j}_{X,\Z/m\Z}(\cF,\mu_m^{\otimes n}).
\]
By \cite[III,\,Ex.\,1.39\,(b)]{Mi}, we have for any $x\in X$ the isomorphisms of stalks
\[
\IExt^j_{X,\Z/m\Z}(\cF,\mu_m^{\otimes n})_x \cong \Ext^{j}_{\Z/m\Z}(\cF_x,(\mu_m^{\otimes n})_x).
\]
Since $(\mu_m^{\otimes n})_x\cong \Z/m\Z$ is an injective $\Z/m\Z$-module, we obtain
\[
\IExt^j_{X,\Z/m\Z}(\cF,\mu_m^{\otimes n})=0, \quad j \ge 1.
\]
Hence the spectral sequence degenerates
showing the statement of the lemma.
\end{proof}

\section{Arithmetic duality for complexes}
Using the results of \cref{sec4}, we generalize local and global duality theorems for single sheaves to bounded complexes and higher dimensions. For technical reasons, we also have to work with henselian local fields in the following sense.

\begin{definition}\label{henseliandef}
In this paper a \emph{henselian local field}\/ is one of the following:
\begin{itemize}
  \item[{(nonarchimedean):}]the quotient field of an excellent, henselian, discrete valuation ring with finite residue field.
  \item[{(complex)}:]an algebraically closed subfield of $\bC$.
  \item[(real):]a relatively algebraically closed subfield of $\bR$.
\end{itemize}
\end{definition}

\begin{remark} Complete discrete valuation rings are excellent by \cite[7.8.3\,(iii)]{ega4}. The local rings of global fields and their henselizations are excellent by \cite[7.8.3\,(ii)]{ega4} and \cite[18.7.6]{ega4}.
\end{remark}
\noindent
The henselian local fields of \cref{henseliandef} come with a natural valuation and their completions are the local fields in the usual sense.
For a henselian local field $K$ with completion $\widehat K$,  the natural homomorphism of absolute Galois groups
$ \mathit{Gal}_{\widehat K}\to\mathit{Gal}_K$ is an isomorphism by \cite[X, 2.2.1]{sga4}. Hence the known statements about the Galois cohomology of local fields immediately extend to henselian local fields in the sense of \cref{henseliandef}. Also the statement of  \cref{modifieddual} holds for arbitrary real henselian local fields.

\begin{theorem}[Duality over henselian local fields] \label{Duality over local fields} Let $K$ be a henselian local field and let $f:X\to \Spec K$ be separated and of finite type.  Let $\cF^\bullet$ be a bounded, constructible complex of sheaves on~$X_\et$.  If $\ch(K)=p>0$ assume that $\cF^\bullet$ is $p$-torsion free.
Then Tate's local duality  induces perfect pairings of finite abelian groups
\[
\hat H^r_c(X, \cF^\bullet) \times \widehat{\Ext}_{X}^{3-r}(\cF^\bullet,\Z^c_{X}(-1))\lang \Q/\Z
\]
for all $r\in \Z$.
\end{theorem}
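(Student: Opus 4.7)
The strategy is to reduce, via the duality results of \cref{sec4}, to classical Tate local duality on $\Spec K$. The three cases of \cref{henseliandef} are handled in parallel: for a complex henselian local field, convention (\ref{conventionC}) makes both sides of the pairing vanish, so the statement is trivial. For a nonarchimedean $K$, apply \cref{dualgl} with $B=\Spec K$ to obtain the isomorphism
\[
R\Hom_X(\cF^\bullet,\Z^c_X(-1))\cong R\Hom_K(Rf_!\cF^\bullet,\Z^c_K(-1))
\]
(no inversion of $\beta$ is needed since the base is a field, so $\beta=1$ in characteristic $0$ and the $p$-torsion-free hypothesis absorbs $\beta=p$ in positive characteristic). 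For a real $K$, use the modified version \cref{modifieddual} instead. By \cref{zc=gm}, $\Z^c_K(-1)\cong\G_m[-1]$, so
\[
\widehat{\Ext}^{3-r}_X(\cF^\bullet,\Z^c_X(-1))\cong\widehat{\Ext}^{2-r}_K(Rf_!\cF^\bullet,\G_m).
\]
On the cohomology side, $\hat H^r_c(X,\cF^\bullet)=\hat H^r_\et(\Spec K,Rf_!\cF^\bullet)$ by definition. Thus we have reduced the theorem to the assertion that, for $M^\bullet=Rf_!\cF^\bullet$, the pairing
\[
\hat H^r(K,M^\bullet)\times\widehat{\Ext}^{2-r}_K(M^\bullet,\G_m)\lang\Q/\Z
\]
is a perfect pairing of finite abelian groups.

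\textbf{Reduction to the classical case.} The complex $M^\bullet=Rf_!\cF^\bullet$ is a bounded constructible complex of Galois modules (boundedness of $Rf_!$ holds by the finite cohomological dimension of separated finite-type $K$-schemes, and constructibility is preserved by $Rf_!$). In positive characteristic, the assumption that $\cF^\bullet$ is $p$-torsion free is inherited by $M^\bullet$. One now invokes the classical Tate local duality theorem for bounded constructible complexes of Galois modules over a local (or henselian local) field $K$, which states precisely that the displayed pairing is a perfect pairing of finite groups for each $r\in\Z$, with the modification $\hat{\,}$ in the real case. This is the analogue for complexes of the classical result \cite[(7.2.6)]{NSW}.

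\textbf{Extension from modules to complexes.} The extension of classical Tate duality from single finite Galois modules to bounded constructible complexes follows by a standard argument: both sides admit convergent hypercohomology spectral sequences starting from the cohomology sheaves $H^t(M^\bullet)$, namely
\[
E_2^{st}=\hat H^s(K,H^t(M^\bullet))\Rightarrow\hat H^{s+t}(K,M^\bullet)
\]
and a dual Ext spectral sequence converging to $\widehat{\Ext}^{s+t}_K(M^\bullet,\G_m)$. Classical Tate duality for the single modules $H^t(M^\bullet)$ induces a perfect pairing on $E_2$-terms (shifting indices appropriately), and by finiteness one obtains the desired pairing on the abutments via a standard convergence-plus-$5$-lemma argument. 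Note that the pairing produced through \cref{dualgl}/\cref{modifieddual} and \cref{zc=gm} must be compared with the pairing of classical Tate duality; this compatibility (up to sign) follows from the construction of the cycle class map used in the proof of \cref{dualgl} and the identification $\Z^c_K(-1)\cong\G_m[-1]$.

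\textbf{Main obstacle.} The principal technical point is verifying in the real case that the pairing obtained by applying $\hat\bH(\Gal(\bC|\bR),-)$ to the isomorphism of \cref{modifieddual} really coincides with the Tate-modified pairing predicted by classical duality for $\Gal(\bC|\bR)$-modules; this requires keeping track of the Godement resolutions introduced in \cref{sec2} and checking that the cup-product pairing on $\bC$-points descends compatibly to the $\hat{\,}$-modification. Once this compatibility is in place, finiteness of the Shafarevich-Tate groups and perfectness of the pairing for complexes follow formally from the finiteness and perfectness in the single-module case.
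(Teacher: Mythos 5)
Your proposal follows essentially the same path as the paper's own proof: reduce to $\Spec K$ via $Rf_!$, \cref{dualgl}, and \cref{modifieddual}; identify $\Z^c_K(-1)\cong\G_m[-1]$ via \cref{zc=gm}; and pass from a single sheaf to a bounded complex via the hypercohomology spectral sequence applied to classical Tate local duality \cite[(7.2.6),(7.2.17)]{NSW}. The only step you elide is the identification $\widehat{\Ext}^{2-r}_K(\cF,\G_m)\cong\hat H^{2-r}(K,\IHom(\cF,\G_m))$, which the paper justifies by noting that $\IHom_K(-,\G_m)$ is exact on prime-to-$p$ torsion sheaves so that $\IExt^i_K(\cF,\G_m)=0$ for $i>0$.
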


\begin{proof} We start with the case $X=\Spec K$ and a single sheaf $\cF$. By Tate's local duality theorem \cite[(7.2.6), (7.2.17)]{NSW}, the cup product followed by the invariant map of local class field theory induce a perfect pairing of finite abelian groups
\begin{equation}\label{localpairingdim0}
\hat H^r(K, \cF) \times \hat H^{2-r}(K, \IHom(\cF,\G_m))\stackrel{\cup}{\lang} H^2(K,\G_m) \stackrel{\inv}{\lang}   \Q/\Z.
\end{equation}
for all $r\in \Z$.
Since $\IHom_{K}(-,\G_m)$ is exact on prime-to-$p$ torsion sheaves, we have $\mathcal{E}\!xt^i_{K}(\cF,\G_m)=0$ for $i>0$ and the local to global spectral sequence implies that
\begin{equation}
\hat H^{2-r}(K, \IHom(\cF,\G_m))\cong \widehat{\Ext}_{K}^{2-r}(\cF,\G_m).
\end{equation}
By \cref{zc=gm}, we have $\Z^c_{K}(-1)\cong \G_m[-1]$. This shows the perfect pairing of the theorem for a single sheaf and the result for a bounded complex $\cF^\bullet$ follows from the map of hypercohomology spectral sequences
\[
\begin{tikzcd}
H^s_\et(K,H^t(\cF^\bullet))\arrow[r,Rightarrow]\arrow[d]&H_\et^{s+t}(K,\cF^\bullet)\dar\\
\widehat{\Ext}_{K}^{3-s}(H^t(\cF^\bullet),\Z^c_K(-1))^\vee\arrow[r,Rightarrow]& \widehat{\Ext}_{K}^{3-s-t}(\cF^\bullet,\Z^c_K(-1))^\vee.
\end{tikzcd}
\]
In the general case, we have
\begin{equation}
\hat H^r_c(X, \cF^\bullet)= \hat H^r_c(K, Rf_!\cF^\bullet)
\end{equation}
by definition. Furthermore,
\begin{equation}\widehat{\Ext}_{X}^{3-r}(\cF^\bullet,\Z^c_{X}(-1))\cong \widehat{\Ext}_{K}^{3-r}(Rf_! \cF^\bullet,\Z^c_{K}(-1))
\end{equation}
by \cref{dualgl} and \cref{modifieddual}.  Hence we obtain the asserted natural perfect pairing from the diagram
\[
\begin{tikzcd}[column sep=.2cm]
\hat H^r_c(X, \cF^\bullet)\arrow[d,"\wr"]&\times& \widehat{\Ext}_{X}^{3-r}(\cF^\bullet,\Z^c_{X}(-1))\arrow[d,"\wr"] \\
\hat H^r_c(K, Rf_!\cF^\bullet)&\times& \widehat{\Ext}_{K}^{3-r}(Rf_! \cF^\bullet,\Z^c_{K}(-1))\arrow[rr,"\cup"]&& H^2(K,\G_m) \arrow[rr,"\inv"]&&\Q/\Z.
\end{tikzcd}
\]
\end{proof}

\begin{theorem}[Henselian local duality]\label{Local duality}
Let $K$ be a nonarchimedean henselian local field, $B=\Spec \O_K$, $b\in B$ the closed point and $f: \cX\to B$ separated and of finite type. Let $\cF^\bullet$ be a bounded, constructible complex of sheaves on $\cX_\et$.  If $\ch(K)=p>0$ assume that $\cF^\bullet$ is $p$-torsion free.
Then the local duality on $B$ induces  perfect pairings of finite abelian groups
\[
H^r_{\{b\}}(B, Rf_!\cF^\bullet) \times \Ext_{\cX}^{2-r}(\cF^\bullet,\Z^c_{\cX}(0))\lang \Q/\Z.
\]
for all $r\in \Z$.
\end{theorem}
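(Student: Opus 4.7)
The plan is to parallel the proof of \cref{Duality over local fields}: reduce the Ext-statement on $\cX$ to a classical local duality on the base $B=\Spec\O_K$, then handle the passage from a single sheaf to a bounded complex by spectral sequence comparison. First I would apply \cref{dualgl} with $n=0$. The torsion hypotheses cause no trouble with the inversion of $\beta$ from \cref{betaconv}: in mixed characteristic we may take $\beta=1$, while in case $\ch(K)=p>0$ the assumption that $\cF^\bullet$ is $p$-torsion free makes inverting $\beta=p$ harmless. This gives
\[
R\Hom_\cX(\cF^\bullet,\Z^c_\cX(0))\cong R\Hom_B(Rf_!\cF^\bullet,\Z^c_B(0)).
\]
Combined with the identification $\Z^c_B(0)\cong\G_m[1]$ from \cref{zc=gm}, I obtain a natural isomorphism
\[
\Ext^{2-r}_\cX(\cF^\bullet,\Z^c_\cX(0))\;\cong\;\Ext^{3-r}_B(Rf_!\cF^\bullet,\G_m).
\]

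Next I would invoke Mazur's local duality on the henselian trait $B$: for any bounded, constructible complex $\cG^\bullet$ of torsion sheaves on $B_\et$ (with torsion order invertible on $B$ in case $\ch(K)=p$), one has a perfect pairing of finite abelian groups
\[
H^r_{\{b\}}(B,\cG^\bullet)\;\times\;\Ext^{3-r}_B(\cG^\bullet,\G_m)\;\lang\;H^3_{\{b\}}(B,\G_m)\;\cong\;\Q/\Z.
\]
For $B$ the spectrum of a \emph{complete} discrete valuation ring with finite residue field this is the classical Mazur--Milne theorem (see \cite[II \S1]{Mi}); the henselian case reduces to the complete one because $\mathit{Gal}_{\widehat K}\riso\mathit{Gal}_K$ by \cite[X,\,2.2.1]{sga4} and \'{e}tale cohomology with constructible coefficients on $B$ agrees with that on $\Spec\widehat\O_K$ (and likewise for the supports at the closed point). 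For a single sheaf the statement is then immediate. To promote it to a bounded complex I would compare, term by term via the sheaf-level duality, the two convergent hypercohomology spectral sequences
\[
E_2^{s,t}=H^s_{\{b\}}(B,\cH^t(\cG^\bullet))\Rightarrow H^{s+t}_{\{b\}}(B,\cG^\bullet),\qquad E_2^{s,t}=\Ext^s_B(\cH^{-t}(\cG^\bullet),\G_m)\Rightarrow \Ext^{s+t}_B(\cG^\bullet,\G_m),
\]
exactly as in the proof of \cref{Duality over local fields}. Applying this with $\cG^\bullet=Rf_!\cF^\bullet$, which is a bounded constructible complex on $B_\et$ since $f$ is separated of finite type and $\cF^\bullet$ is bounded constructible, yields the asserted perfect pairing.

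The main obstacle is essentially bookkeeping: checking that $Rf_!\cF^\bullet$ inherits the constructibility and, in positive characteristic, the $p$-torsion-freeness needed to invoke Mazur duality; verifying that passing from $\widehat K$ to $K$ loses nothing at the level of local cohomology with support at $b$ (which follows from excellence of $\O_K$ together with the Galois comparison); and checking that the spectral-sequence comparison really produces a perfect pairing in each total degree. Finiteness of both sides is then automatic from constructibility and the finiteness built into the sheaf-level Mazur theorem.
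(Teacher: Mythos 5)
Your proposal follows essentially the same route as the paper: apply \cref{dualgl} to move from $\cX$ to $B$, identify $\Z^c_B(0)\cong\G_m[1]$ via \cref{zc=gm}, invoke the classical local duality on the henselian trait for a single sheaf, and pass to bounded complexes by the hypercohomology spectral sequence. The only minor slip is the citation: the local duality on $B$ is \cite[II, Thm.\ 1.8(b)]{ADT}, which is already stated for excellent henselian traits with finite residue field, so the detour through $\widehat K$ that you sketch is unnecessary (though harmless).
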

\begin{remark}
The group $H^r_{\{b\}}(B, Rf_!\cF^\bullet)$ is the cohomology of $\cX$ with compact support in the closed fibre.
\end{remark}
\begin{proof}[Proof of \cref{Local duality}] We start with the case $\cX=B$ and a single sheaf $\cF$. Then the local duality theorem \cite[II, Thm.\ 1.8(b)]{ADT} states that
\begin{equation}\label{dim1localdual}
H^r_{\{b\}}(B,\cF) \times \Ext^{3-r}_{B}(\cF,\G_m) \stackrel{\cup}{\lang} H^3_{\{b\}}(B,\G_m) \stackrel{\tr}{\lang} \Q/\Z
\end{equation}
is a perfect pairing of finite abelian groups for all $r\in \Z$. By \cref{zc=gm}, we have $\Z^c_{B}\cong \G_m[1]$.
This shows the perfect pairing of the theorem for a single sheaf and the result for a bounded complex $\cF^\bullet$ follows from the hypercohomology spectral sequence.

In the general case,  we obtain the asserted perfect pairing from the diagram
\[
\begin{tikzcd}[column sep=.2cm]
H^r_{\{b\}}(B, Rf_!\cF^\bullet)\arrow[d,"="]&{\times}& \Ext_{\cX}^{2-r}(\cF^\bullet,\Z^c_{\cX}(0))\arrow[d,"\wr"] \\
H^r_{\{b\}}(B,Rf_!\cF^\bullet)&{\times}& \Ext^{2-r}_{B}(Rf_!\cF^\bullet,\Z^c_{B}(0)) \arrow[rr]&&\Q/\Z.
\end{tikzcd}
\]
\end{proof}

\bigskip
Next we recall the construction of the trace isomorphism (cf.\ \cite[2.5.9]{Zi},  \cite[\S2]{Maz} or \cite[II, 2.6]{ADT}).
For $\cF=\bG_m$ and $S=$(all places),  \cref{verglohnec} and the fact that $H^{n+1}_v(\cT,\bG_m)\cong H^n(k_{(v)},\bG_m)$ for $n \ge 1$  yield an exact sequence
\begin{equation}
0\to \Br(k) \to \bigoplus_{\text{all }v} \Br(k_{(v)}) \to \hat H^3_c(\cT,\bG_m) \to H^3(k, \bG_m)=0.
\end{equation}
Therefore the classical Hasse principle for the Brauer group \cite[(8.1.17)]{NSW} implies the existence of a natural trace isomorphism
\begin{equation}
\tr: \hat H^3_c(\cT,\bG_m) \liso \Q/\Z.
\end{equation}

\begin{theorem}[Generalized Artin-Verdier duality] \label{artinverdier}
Let $k$ be a global field and $\cB=\Spec \O_\varnothing$.
Let $f: \cX\to \cB$ be a separated scheme of finite type and $\cF^\bullet$ a bounded, constructible complex of sheaves on~$\cX_\et$.  If $\ch(k)=p>0$ assume that $\cF^\bullet$ is $p$-torsion free.  Then Artin-Verdier duality induces  perfect pairings of finite abelian groups
\begin{equation*}
\hat H^r_c(\cX, \cF^\bullet) \times \Ext_\cX^{2-r}(\cF^\bullet, \Z^c_\cX(0)) \lang \Q/\Z
\end{equation*}
for all $r\in \Z$.
\end{theorem}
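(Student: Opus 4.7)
The plan is to reduce to the case $\cX = \cB$ and a single sheaf, then invoke classical Artin-Verdier duality and extend to bounded complexes by a spectral-sequence argument, exactly as in the proofs of \cref{Duality over local fields} and \cref{Local duality} above.

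First, by definition $\hat H^r_c(\cX, \cF^\bullet) = \hat H^r_c(\cB, Rf_! \cF^\bullet)$. Since $\cF^\bullet$ is torsion, and (by hypothesis) $p$-torsion free when $\ch(k)=p>0$, it is $\Z[\tfrac{1}{\beta}]$-linear for $\beta=p$ (resp.\ $\beta=1$ in characteristic zero). Hence \cref{dualgl}, combined with \cref{modifieddual} applied at the real fibres, yields a natural isomorphism
\[
\Ext_\cX^{2-r}(\cF^\bullet, \Z^c_\cX(0)) \cong \Ext_\cB^{2-r}(Rf_! \cF^\bullet, \Z^c_\cB(0)),
\]
reducing the assertion to the case $\cX = \cB$.

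For $\cX = \cB$ and a single constructible sheaf $\cF$, I would invoke classical Artin-Verdier duality: the cup product followed by the trace $\tr : \hat H^3_c(\cB, \G_m) \liso \Q/\Z$ constructed immediately before the theorem statement gives a perfect pairing of finite abelian groups
\[
\hat H^r_c(\cB, \cF) \times \Ext^{3-r}_\cB(\cF, \G_m) \lang \Q/\Z
\]
(in the number-field case this is Artin-Verdier in the form of \cite{Zi}; in the function-field case it is Poincar\'{e} duality on the smooth projective curve $\cB$). Via the identification $\Z^c_\cB(0) \cong \G_m[1]$ from \cref{zc=gm}, this rewrites as the asserted pairing. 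To pass from a single sheaf to a bounded complex $\cF^\bullet$, the map of convergent hypercohomology spectral sequences
\[
\begin{tikzcd}[column sep=small]
\hat H^s_c(\cB, H^t(Rf_!\cF^\bullet)) \arrow[r, Rightarrow] \arrow[d, "\wr"] & \hat H^{s+t}_c(\cB, Rf_!\cF^\bullet) \arrow[d] \\
\Ext_\cB^{2-s}(H^t(Rf_!\cF^\bullet), \Z^c_\cB(0))^\vee \arrow[r, Rightarrow] & \Ext_\cB^{2-s-t}(Rf_!\cF^\bullet, \Z^c_\cB(0))^\vee
\end{tikzcd}
\]
is an isomorphism on $E_2$-pages by the single-sheaf case, hence on $E_\infty$-pages and on the abutments.

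The main obstacle is the careful bookkeeping at real archimedean places. The modified compactly supported cohomology $\hat H^r_c(\cB,-)$ is defined in \cref{sec2} via a mapping cone involving $\hat\bH(\Spec\bR,-)$; one must check that this construction is correctly matched with the modified Ext on the dual side of \cref{modifieddual}, and that the trace map on $\hat H^3_c(\cB,\G_m)$ realises the classical Artin-Verdier pairing compatibly with these modifications. Once these compatibilities are verified, the proof reduces to assembling the ingredients above.
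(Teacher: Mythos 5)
Your proposal matches the paper's proof essentially step-for-step: reduce to $\cX=\cB$ via \cref{dualgl} and the definition of $\hat H^\bullet_c$, identify $\Z^c_\cB(0)\cong\G_m[1]$ by \cref{zc=gm}, invoke classical Artin--Verdier duality for a single constructible sheaf (Zink/Mazur/Milne, resp.\ Milne in positive characteristic), and pass to bounded complexes via the hypercohomology spectral sequence. Two minor points: \cref{modifieddual} is not actually needed here, since the $\Ext$-groups in the pairing are unmodified and all real-place modification is already absorbed into the cone defining $\hat H^\bullet_c(\cB,-)$, so \cref{dualgl} alone effects the reduction; and you should cite Deligne's finiteness theorem \cite{fin} to ensure that $Rf_!\cF^\bullet$ is a bounded constructible complex on $\cB$, which underpins both the finiteness assertions and the spectral-sequence reduction.
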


\begin{proof}
If $\cX=\cB$, we have $\Z^c_\cB(0)=\G_m[1]$ by \cref{zc=gm}. In this case, the result follows by the hypercohomology spectral sequence from the classical Artin-Verdier duality for a single constructible sheaf $\cF$, see \cite[Thm.\ 3.2.1]{Zi},  \cite{Maz} and \cite[II, Thm.\ 3.1]{ADT},  resp.\ \cite[II, Thm.\ 6.2]{ADT} if $\ch(k)=p>0$.

The general case then follows from
\begin{equation}
\hat H^r_c(\cX, \cF^\bullet)= \hat H^r(\cB, Rf_! \cF^\bullet)
\end{equation}
by definition, from
\begin{equation}
\Ext_\cX^{2-r}(\cF^\bullet, \Z^c_\cX(0))\cong \Ext_\cB^{2-r}(Rf_! \cF^\bullet, \Z^c_\cB(0))
\end{equation}
by  \cref{dualgl}, and from the fact that  $Rf_! \cF^\bullet$ is a bounded, constructible complex  \cite{fin}.
\end{proof}

\section{A base change property}

\begin{proposition} \label{extnummer2} Let $B$  be an integral, noetherian and regular scheme of Krull-di\-men\-sion~$\le1$, and let $\beta$ be a natural number as in \cref{betaconv}. Let $(B_i)_{i\in I}$ be a filtered inverse system of \'{e}tale $B$-schemes and $B_\infty=\varprojlim B_i$.  Let $f: \cX\to B$ be separated and of finite type and let
$\cF^\bullet$ be a bounded, constructible complex of sheaves on $\cX_\et$.   Consider the fibre product diagram
\[
\begin{tikzcd}
\cX_\infty\rar{\iota_\cX}\arrow[d,"f_\infty"'] &\cX\dar{f}\\
B_\infty\rar{\iota}&B.
\end{tikzcd}
\]
Then, for $n\leq 0$, the natural map
\[
\iota^* Rf_*R\IHom_{\cX}(\cF^\bullet,\Z^c_{\cX}(n)\otimes \Z[\tfrac{1}{\beta}])\lang Rf_{\infty *}R\IHom_{\cX_\infty}(\iota_\cX^*\cF^\bullet,\Z^c_{\cX_\infty}(n)\otimes\Z[\tfrac{1}{\beta}])
\]
is an isomorphism in the derived category of sheaves on $(B_\infty)_\et$.
\end{proposition}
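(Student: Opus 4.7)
The plan is to factor the natural transformation as the composition of three more elementary base-change maps, each of which will be shown to be an isomorphism. Write the given map as
\begin{align*}
\iota^* Rf_* R\IHom_\cX(\cF^\bullet, \Z^c_\cX(n)[\tfrac{1}{\beta}])
&\xrightarrow{(a)} Rf_{\infty *}\, \iota_\cX^* R\IHom_\cX(\cF^\bullet, \Z^c_\cX(n)[\tfrac{1}{\beta}]) \\
&\xrightarrow{(b)} Rf_{\infty *}\, R\IHom_{\cX_\infty}(\iota_\cX^* \cF^\bullet, \iota_\cX^* \Z^c_\cX(n)[\tfrac{1}{\beta}]) \\
&\xrightarrow{(c)} Rf_{\infty *}\, R\IHom_{\cX_\infty}(\iota_\cX^* \cF^\bullet, \Z^c_{\cX_\infty}(n)[\tfrac{1}{\beta}]),
\end{align*}
where (a) is the base-change morphism for $Rf_*$, (b) is the compatibility of $R\IHom$ with pullback, and (c) is induced by the base-change map on cycle complexes.

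For (c), I would verify that $\iota_\cX^* \Z^c_\cX(n) \to \Z^c_{\cX_\infty}(n)$ is a quasi-isomorphism. Since $\iota_\cX$ presents $\cX_\infty$ as the filtered inverse limit of the \'{e}tale $\cX$-schemes $\cX \times_B B_i$, the identity \eqref{limcycle} asserting $\varinjlim_i z_q(W_i, p) = z_q(\varprojlim_i W_i, p)$ applied componentwise shows that pulling back the defining presheaves of $\Z^c_\cX(n)$ yields the defining presheaves of $\Z^c_{\cX_\infty}(n)$.

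For (b), I would check the quasi-isomorphism stalkwise at geometric points of $\cX_\infty$. Since $\cF^\bullet$ is bounded and constructible and $\iota_\cX$ is flat (indeed ind-\'{e}tale), the natural map is standard, reducing to the fact that pullback along a flat morphism preserves a bounded-above injective resolution of a constructible complex up to quasi-isomorphism. Step (a) is ind-\'{e}tale base change for $Rf_*$: since $\iota = \varprojlim_i \iota_i$ is a filtered inverse limit of \'{e}tale morphisms, one has $\iota^* = \varinjlim_i \iota_i^*$, and base change along each \'{e}tale $\iota_i$ is classical; a spectral sequence argument then propagates the statement to the derived category on cohomologically bounded below complexes.

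The main obstacle is ensuring the boundedness needed for step (a), since $R\IHom_\cX(\cF^\bullet, \Z^c_\cX(n)[\tfrac{1}{\beta}])$ is only bounded above a priori. I would overcome this by reducing via \cref{wechselauftorsion} to the torsion complexes $\Z^c_\cX(n)/m$ for $m$ invertible on the base; then \cref{identifyZc} identifies $\Z^c_\cX(n)/m$ with a shift of $\mu_m^{\otimes d-n}$ on the regular locus of $\cX$, giving a bounded constructible complex to which the base-change theorem applies directly. The localization triangle of \cref{localization} together with noetherian induction on the dimension of the support then extends the isomorphism from smooth strata to all of $\cX$.
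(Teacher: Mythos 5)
Your three-step decomposition of the natural map into the base change of $Rf_*$, the compatibility of $R\IHom$ with pullback, and the base change of the cycle complex is exactly the opening move of the paper's proof. After that, however, the paper's argument and yours diverge in a way that matters.

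The paper's key step, which your proposal does not use, is to invoke the duality isomorphism of \cref{dualgl},
\[
Rf_*R\IHom_{\cX}(\cF^\bullet,\Z^c_{\cX}(n)\otimes\Z[\tfrac{1}{\beta}])\ \cong\ R\IHom_{B}(Rf_!\cF^\bullet,\Z^c_{B}(n)\otimes\Z[\tfrac{1}{\beta}]),
\]
both over $B$ and over $B_\infty$. This moves the entire problem from the potentially high-dimensional $\cX$ down to the one-dimensional base $B$, where $\Z^c_B(n)$ is just a shift of $\G_m$ or $\Z$ by \cref{zc=gm}, where $Rf_!\cF^\bullet$ is a bounded constructible complex, and where the d\'evissage into a locally constant sheaf on a dense open and a constructible sheaf on a zero-dimensional closed complement is almost trivial. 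Your proposal instead tries to verify the maps (a), (b), (c) directly on $\cX$, and this is where the gaps appear. In step (b) you say you would check the quasi-isomorphism \emph{stalkwise}, but stalks do not commute with $\IHom$ for an arbitrary constructible $\cF^\bullet$ (the paper's appeal to Milne, III Ex.~1.31~b), and our \cref{extdegen}, apply only in the locally constant case). In step (a) you correctly identify that $R\IHom_{\cX}(\cF^\bullet,\Z^c_{\cX}(n)[\tfrac1\beta])$ is only bounded above a priori, but your proposed fix — reduce to $\Z^c_\cX(n)/m$ via \cref{wechselauftorsion}, identify it with a Tate twist on the regular locus via \cref{identifyZc}, and then induct via \cref{localization} — is carrying out, in effect, a large part of the proof of \cref{dualgl} from scratch, and you have not spelled out how $i_*$ and $R\IHom$ interact across the strata, nor how the regularity hypothesis in \cref{identifyZc} is to be discharged on the closed complement. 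None of these issues are fatal in principle, but as written the proof is not complete. The paper's detour through $B$ is precisely the device that dissolves both difficulties at once, and you should incorporate it rather than fight the boundedness and stalk problems on $\cX$ directly.
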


\begin{proof} The natural map of the proposition is the composition of the following three maps:
the first is the base change map of the fibre product diagram
\begin{equation*}
\iota^* Rf_*R\IHom_{\cX}(\cF^\bullet,\Z^c_{\cX}(n)\otimes \Z[\tfrac{1}{\beta}])\to Rf_{\infty *}\iota_{\cX}^*R\IHom_{\cX} (\cF^\bullet,\Z^c_{\cX}(n)\otimes \Z[\tfrac{1}{\beta}]).
\end{equation*}
The second is the map
\begin{multline*}
Rf_{\infty *}\iota_{\cX}^*R\IHom_{\cX} (\cF^\bullet,\Z^c_{\cX}(n)\otimes \Z[\tfrac{1}{\beta}]) \to \\
Rf_{\infty *}R\IHom_{\cX_\infty} (\iota_{\cX}^* \cF^\bullet,\iota_{\cX}^*\Z^c_{\cX}(n)\otimes \Z[\tfrac{1}{\beta}])
\end{multline*}
induced by  $\iota_{\cX}^*R\IHom_{\cX}(-,-) \to R\IHom_{\cX_\infty}(\iota_{\cX}^*- , \iota_{\cX}^*-)$,
and the third is the map
\begin{multline*}
Rf_{\infty *} R\IHom_{\cX_\infty} (\iota_{\cX}^* \cF^\bullet,\iota_{\cX}^*\Z^c_{\cX}(n)\otimes \Z[\tfrac{1}{\beta}]) \to   \\
Rf_{\infty *} R\IHom_{\cX_\infty} (\iota_{\cX}^* \cF^\bullet,\Z^c_{\cX_\infty}(n)\otimes \Z[\tfrac{1}{\beta}])
\end{multline*}
induced by the natural map $\iota_{\cX}^*\Z^c_{\cX}(n) \to \Z^c_{\cX_\infty}(n)$.
By \cref{dualgl}, it suffices to show that the map
\[
\iota^* R\IHom_{B}(Rf_! \cF^\bullet,\Z^c_{B}(n)\otimes\Z[\tfrac{1}{\beta}])
\lang R\IHom_{B_\infty}({Rf_\infty}_! \iota_\cX^*\cF^\bullet,\Z^c_{B_\infty}(n)\otimes\Z[\tfrac{1}{\beta}])
\]
is an isomorphism.
By \cite[XVII, 5.2.6]{sga4}, $Rf_!$ commutes with base change and sends bounded, constructible complexes to bounded, constructible complexes by \cite[XVII, 5.3.6]{sga4}. Hence we have $ R{f_\infty}_! \iota_\cX^*\cF^\bullet\cong \iota^* Rf_! \cF^\bullet$ and it suffices to show that the map
\begin{equation}\label{extlocalis}
\iota^* R\IHom_{B}(\cF^\bullet,\Z^c_{B}(n)\otimes\Z[\tfrac{1}{\beta}])
\lang R\IHom_{B_\infty}(\iota^*\cF^\bullet,\Z^c_{B_\infty}(n)\otimes\Z[\tfrac{1}{\beta}])
\end{equation}
is an isomorphism for every bounded, constructible complex $\cF^\bullet$ on $B$. By the hypercohomology  spectral sequence, we can assume that $\cF^\bullet$ is a single sheaf $\cF$.

A geometric point $\bar x\to B_\infty$ induces a compatible system of geometric points $\bar x\to B_i\to B$. Assume for the moment that $\cF$ is locally constant. Then by \cite[III,\,Exercise\,1.31\,b)]{Mi} and \cref{limcycle}, the map induced by (\ref{extlocalis}) on the stalks at $\bar x$ is the identity of
\[
R\Hom_{\Z}(\cF_{\bar x},\Z^c_{B}(n)_{\bar x}\otimes\Z[\tfrac{1}{\beta}]).
\]
This shows that (\ref{extlocalis}) is an isomorphism if $\cF$ is locally constant. For a general constructible $\cF$, there is a non-empty open $j:U\to B$ such that $\cF|_U$ is locally constant.    Let $i:Z\to B$ be the closed complement. Then $Z$ has dimension zero, hence $i^*\cF$ is locally constant. Using the short exact sequence $0\to j_!j^*\cF \to \cF \to i_*i^*\cF\to 0$, the results for $j^*\cF$ and $i^*\cF$, the adjunctions $i_*\dashv i^!$, $j_!\dashv j^*$, and \cref{localization} imply the result for $\cF$.
\end{proof}

\begin{remark}
We will apply \cref{extnummer2} in the following situations:
\begin{itemize}
  \item $B_\infty= \Spec \O_S$, $S$ an infinite set of primes.
  \item $B_\infty= \Spec \O_{(v)}$, the henselization.
\end{itemize}
\end{remark}

\section{Some restricted products}
We recall the notion of a restricted product of topological groups: Assume we are given a family $(A_i)_{i\in I}$ of Hausdorff abelian groups and let an open subgroup $B_i\subset A_i$ be given for almost all $i$. For consistency of notation, we put $B_i=A_i$ for the
remaining indices. The \emph{restricted product} ${\resprod}_{i \in I} (A_i,B_i)$
is the subgroup of\/ ${\prod} A_i$ consisting of all $(x_i)_{i\in I}$
 such that $x_i \in B_i$ for almost all~$i$.
It becomes a topological group by defining  the products
$
\prod_{i \in J}{U_i} \times \prod_{i \in I \smallsetminus J} B_i,
$
where $J$ runs through the finite subsets of\/ $I$ and\/ $U_i$ runs through a
basis of neighbourhoods of the identity of $A_i$ for $i\in J$ as a basis of
neighbourhoods of the identity. Note that
\[
\resprod_{i \in I} (A_i,B_i) \cong \varinjlim_{\substack{J\subset I\\\text{finite}}} \ \left( \prod_{i \in J}{A_i} \ \times  \prod_{i \in I \smallsetminus J} B_i\right),
\]
both in the algebraic and the topological sense, i.e., the direct limit topology on the right hand side is a group topology and coincides with the given topology on the left hand side. Also note that the transition maps in the direct system are injective and open. In particular, for finite $J\subset I$, the groups $\prod_{i \in J}{A_i} \ \times  \prod_{i \in I \smallsetminus J} B_i$ are open subgroups of the restricted product.

\bigskip\noindent
Let $k$ be a global field, $X/k$  separated and  of finite type, $m\ge 1$ an integer prime to $p=\ch (k)$ and $\cF^\bullet$ a bounded, constructible complex of sheaves of $\Z/m\Z$-modules on $X_\et$.
We choose a separated scheme of finite type $f: \cX\to \cB=\Spec \O_\varnothing$ extending $X$, i.e., $X=\cX\times_\cB k$. Choosing $\cX$ small enough, we can assume that $\cF^\bullet$ is the restriction of a bounded, constructible complex of sheaves of $\Z/m\Z$-modules on $\cX_\et$ to $X_\et$. By abuse of notation, we will denote this complex also by~$\cF^\bullet$.
For non\-archi\-me\-dean~$v$, we define the \emph{unramified part}
\begin{equation}
\Ext^n_{X_{(v)},\nr}(\cF^\bullet, \Z^c_{X_{(v)}}(-1))\subset \Ext^n_{X_{(v)}}(\cF^\bullet, \Z^c_{X_{(v)}}(-1))
\end{equation}
as the image of the restriction map
\begin{multline}
\Ext^{n-2}_{\cX_{(v)}}(\cF^\bullet, \Z^c_{\cX_{(v)}}(0))\to\\
\Ext^{n-2}_{X_{(v)}}(\cF^\bullet, \Z^c_{X_{(v)}/\cB_v}(0))\stackrel{\text{Eq.}\,(\ref{twistversch})}{=}  \Ext^n_{X_{(v)}}(\cF^\bullet, \Z^c_{X_{(v)}}(-1)).
\end{multline}

\begin{definition} \label{QSdef}
For a set of primes $S$ we let
\begin{equation}
Q^n_S(X, \cF^\bullet ):=\resprod_{v \in S} \widehat{\Ext}^n_{X_{(v)}}(\cF^\bullet, \Z^c_{X_{(v)}}(-1))
\end{equation}
be the restricted product with respect to the unramified parts (defined for  nonarchimedean~$v$).
We equip the factors $\widehat{\Ext}^n_{X_{(v)}}(\cF^\bullet, \Z^c_{X_{(v)}}(-1))$ with the discrete topology and $Q^n_S(X, \cF^\bullet )$ with the restricted product topology.
\end{definition}

\begin{lemma}
\begin{compactitem}
\item[\rm (i)] $Q^n_S(X, \cF^\bullet )$ is independent of the choice of~$\cX$.
\item[\rm (ii)] $Q^n_S(X, \cF^\bullet )$ is a locally compact topological group.
\end{compactitem}
\end{lemma}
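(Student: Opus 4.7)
\emph{Proof plan.}
For part (i), the plan is a standard spreading-out argument. Given two choices of separated finite-type $\cB$-schemes $\cX,\cX'$ extending $X$ with compatible extensions of $\cF^\bullet$, I would first find a nonempty open $\cU\subset\cB$ and an isomorphism $\cX|_\cU\cong\cX'|_\cU$ identifying the two extensions of $\cF^\bullet$. For every nonarchimedean $v\in\cU$, base change to $\O_{(v)}$ identifies $\cX_{(v)}$ with $\cX'_{(v)}$ compatibly with $\cF^\bullet$, so the two candidate unramified subgroups coincide. Since $\cB\setminus\cU$ consists of only finitely many closed points, the two systems of open subgroups agree for all but finitely many $v$, and the restricted product---both as an abstract group and as a topological group---is insensitive to finitely many of the chosen open subgroups.

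For part (ii), the key input will be \cref{Duality over local fields}. Since $\cF^\bullet$ is bounded constructible and $m$ is prime to $p=\ch(k)$, the $p$-torsion-freeness hypothesis of that theorem is automatic in positive characteristic. It follows that every factor
\[
A_v := \widehat{\Ext}^n_{X_{(v)}}(\cF^\bullet,\Z^c_{X_{(v)}}(-1))
\]
is finite. With the discrete topology, $A_v$ is then compact, and each unramified subgroup $B_v\subset A_v$ is a compact open subgroup. Consequently $\prod_{v\in S} B_v$ is a compact open subgroup of $Q^n_S(X,\cF^\bullet)$: open as a basic neighbourhood of the identity for the restricted-product topology, and compact by Tychonoff since each $B_v$ is finite. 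The existence of a compact open subgroup then immediately yields local compactness.

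The substance of the argument is really concentrated in \cref{Duality over local fields}, which supplies finiteness of the factors; everything else is formal. The only point requiring slight care is the archimedean contribution: at complex places the factors vanish by the conventions of \cref{sec2}, at real places they are again finite by \cref{Duality over local fields} applied to the henselian local field $k_{(v)}\subset\bR$, and in both cases the convention $B_v=A_v$ causes no trouble because $A_v$ itself is finite.
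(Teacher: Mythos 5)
Your proof is correct and follows essentially the same route as the paper: for (i) a spreading-out argument over a nonempty open $\cU\subset\cB$ identifying the two models and the extended $\cF^\bullet$, whence the unramified subgroups coincide for almost all $v$; for (ii) finiteness of the factors from local duality, hence discreteness gives compactness and the restricted product of discrete groups with compact open subgroups is locally compact (the paper simply cites \cite[(1.1.13)]{NSW} for the formal step you spell out via Tychonoff). One small remark: the paper's proof of (ii) cites \cref{Local duality}, which concerns $\Ext$-groups over $\Spec\O_{(v)}$; since the factors $\widehat{\Ext}^n_{X_{(v)}}(\cF^\bullet,\Z^c_{X_{(v)}}(-1))$ live over the field $k_{(v)}$, your invocation of \cref{Duality over local fields} is actually the more precisely fitting reference, and your explicit treatment of the archimedean places (vanishing at complex $v$, finiteness at real $v$) is a correct and welcome detail.
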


\begin{proof}
(i): If $\cX_1$ and $\cX_2$ are separated schemes of finite type over $\cB$ with generic fibre $X$, then there exists a nonempty open subscheme $\cU \subset \cB$ and an isomorphism $\cX_1|_\cU \liso \cX_2|_\cU$ extending the identity of $X$. Shrinking $\cU$ once more, we may assume that this isomorphism identifies the chosen extensions of the constructible complex $\cF^\bullet$. Hence the subgroups $\Ext^n_{X_{(v)},\nr}(\cF^\bullet, \Z^c_{X_{(v)}}(-1))$ defined with respect to $\cX_1$ and $\cX_2$ coincide for all $v\in \cU$, i.e., for almost all $v$, and the restricted products are the same. \\
(ii): The factors $\widehat{\Ext}^n_{X_{(v)}}(\cF^\bullet, \Z^c_{X_{(v)}}(-1))$ are finite by \cref{Local duality}, hence compact with the discrete topology.  Therefore the restricted product is locally compact by \cite[(1.1.13)]{NSW}.
\end{proof}

We recall the following lemma due to S.~Saito.

\begin{lemma}\label{s1.3}
Let $ \O$ be a henselian, discrete valuation ring with quotient field $K$, $m\ge 1$ an integer prime to the residue characteristic of\/ $\O$  and $\cF^\bullet$ a bounded, locally constant, constructible complex of \'{e}tale sheaves of\/ $\Z/m\Z$-modules on $\Spec \O$. Then the natural map
\[
H^n_\et(\Spec \O, \cF^\bullet) \to H^n_\et(\Spec K, \cF^\bullet)
\]
is injective for all $n$.
\end{lemma}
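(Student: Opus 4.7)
The plan is to translate the statement into group cohomology and then exploit the structure of the absolute Galois group of $K$ over $k$. Since $\O$ is henselian, a geometric point $\bar s$ above the closed point identifies the small \'{e}tale topos of $\Spec\O$ with $G_k$-sets, so locally constant constructible sheaves of $\Z/m\Z$-modules on $(\Spec\O)_\et$ correspond under $\cF\mapsto \cF_{\bar s}$ to finite $m$-torsion $G_k$-modules. In particular $\cF^\bullet$ corresponds to a bounded complex $M^\bullet$ of such modules, and via the hypercohomology spectral sequence $H^n_\et(\Spec\O,\cF^\bullet)\cong H^n(G_k,M^\bullet)$. The restriction $\cF^\bullet|_{\Spec K}$ corresponds to $M^\bullet$ with $G_K$ acting through the canonical surjection $q\colon G_K\twoheadrightarrow G_k$, and $H^n_\et(\Spec K,\cF^\bullet)\cong H^n(G_K,M^\bullet)$. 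The map in the lemma is the inflation along $q$, and the task reduces to showing it is injective in every degree.

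The next step is to kill the wild inertia. Let $p$ denote the residue characteristic of $\O$ (allowing $p=0$), let $P\subset G_K$ denote the wild inertia (a pro-$p$ group, trivial when $p=0$), and set $G_K^\mathit{tame}:=G_K/P$. Because $m$ is prime to $p$ and $M^\bullet$ is termwise $m$-torsion, $R\Gamma(P,M^\bullet)=M^\bullet$ canonically, so the Hochschild--Serre spectral sequence for $1\to P\to G_K\to G_K^\mathit{tame}\to 1$ degenerates and inflation yields $H^*(G_K^\mathit{tame},M^\bullet)\liso H^*(G_K,M^\bullet)$. It therefore suffices to prove injectivity of inflation along $G_K^\mathit{tame}\twoheadrightarrow G_k$.

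The decisive input is that the tame fundamental exact sequence
\[
1\to I^\mathit{tame}\to G_K^\mathit{tame}\to G_k\to 1
\]
admits a continuous section $\sigma\colon G_k\to G_K^\mathit{tame}$. This is a classical fact for henselian discrete valuation fields, proved by adjoining to $K$ a compatible system of prime-to-$p$ roots of a uniformizer and comparing with the maximal unramified extension. Given such a $\sigma$, a cocycle $\phi$ representing a class in $H^r(G_k,M^\bullet)$ inflates to $\phi\circ q^r$ on $(G_K^\mathit{tame})^r$, and its restriction along $\sigma$ is $\phi\circ q^r\circ\sigma^r=\phi\circ(q\sigma)^r=\phi$, so $\sigma^*$ retracts the inflation map. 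The argument is entirely natural and extends from single sheaves to bounded complexes via the total complex of the bar resolution. Hence inflation $H^*(G_k,M^\bullet)\to H^*(G_K^\mathit{tame},M^\bullet)$ is split injective, and combining with the isomorphism of the previous step yields the required injectivity.

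The main obstacle is the splitting of the tame fundamental exact sequence; everything else is formal. Two points worth verifying when writing the proof out in full: first, the identification $H^n_\et(\Spec\O,\cF^\bullet)\cong H^n(G_k,M^\bullet)$, which follows because the hypercohomology spectral sequence has $E_2$-page $H^p(\Spec\O,\cH^q(\cF^\bullet))$ coinciding with the $E_2$-page $H^p(G_k,H^q(M^\bullet))$ of the analogous group cohomology spectral sequence (the cohomology sheaves $\cH^q(\cF^\bullet)$ being themselves locally constant constructible); and second, the well-definedness of $\sigma^*$ on hypercohomology, which is immediate from the explicit cochain model.
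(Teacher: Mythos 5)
The paper's proof of \cref{s1.3} consists entirely of the citation to Saito's Lemma~1.3, so there is no argument in the text to compare against; your proof is therefore a self-contained alternative. The route you take---translate to Galois cohomology, kill wild inertia using the prime-to-$p$ hypothesis, then exhibit a retraction of inflation coming from a continuous splitting of the tame exact sequence---is sound, and all the ingredients check out: the wild inertia $P$ is indeed a normal pro-$p$ subgroup of $G_K$ acting trivially on $M^\bullet$, so Hochschild--Serre for $P\trianglelefteq G_K$ does degenerate for prime-to-$p$ coefficients; and the tame sequence does split for an arbitrary henselian discrete valuation field (one checks that the subgroup $\Gal(K^{\mathrm{t}}/L)$, with $L=K(\pi^{1/n}: (n,p)=1)$, meets $I^{\mathrm{t}}$ trivially and surjects onto $G_k$ because $L/K$ is totally ramified). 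Two points deserve more care in a written-out version. First, the opening sentence is stated too strongly: the small \'{e}tale topos of $\Spec\O$ is \emph{not} equivalent to $G_k$-sets (it contains, e.g., $\Spec K$ as an object and plenty of non-lcc sheaves); what is true, and what you actually use, is that $\pi_1^{\et}(\Spec\O)\cong G_k$, so lcc constructible sheaves are classified by finite $G_k$-modules, \emph{and} that for such a sheaf $\cF$ with $\cF_{\bar s}=M$ one has $H^n_\et(\Spec\O,\cF)\cong H^n(G_k,M)$. This last isomorphism is not formal: it is an instance of the rigidity (or ``affine proper base change'') theorem for henselian pairs, or can be proved more elementarily for lcc torsion coefficients via purity and the localization sequence, but either way it needs a citation rather than just the hypercohomology spectral sequence, which only reduces the complex case to the single-sheaf case. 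Second, the splitting of the tame sequence is correct and classical, but since the lemma allows an arbitrary (not necessarily finite or perfect) residue field, it is worth spelling out the compatible-roots-of-a-uniformizer construction or giving a reference rather than labelling it ``a classical fact.'' With those two references supplied, the argument is complete and correct.
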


\begin{proof}
See \cite[Lemma 1.3]{Sa}.
\end{proof}
\begin{lemma} \label{slemma}
For almost all nonarchimedean places $v$, the restriction maps
\begin{equation}\label{injectcoh}
H^n_\et(\cX_{(v)},\cF^\bullet) \to H^n_\et(X_{(v)}, \cF^\bullet),
\end{equation}
\begin{equation}\label{injectcohc}
H^n_c(\cX_{(v)},\cF^\bullet) \to H^n_c(X_{(v)}, \cF^\bullet),
\end{equation}
\begin{equation}\label{injectext}
\Ext_{\cX_{(v)}}^{n-2}(\cF^\bullet,\Z^c_{\cX_{(v)}}(0))\to \Ext_{X_{(v)}}^n(\cF^\bullet,\Z^c_{X_{(v)}}(-1))
\end{equation}
are injective.
\end{lemma}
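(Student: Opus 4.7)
The plan is to reduce all three injectivity statements to Saito's Lemma~\cref{s1.3}. For each, the goal is to exhibit, for almost all nonarchimedean~$v$, a bounded, locally constant, constructible (LCC) complex~$\cK^\bullet$ of $\Z/m\Z$-sheaves on $\Spec\O_{(v)}$ and identify the given restriction map with the natural pullback
\[
H^n(\Spec\O_{(v)},\cK^\bullet)\lang H^n(\Spec k_{(v)},\cK^\bullet|_{\Spec k_{(v)}})
\]
to which Saito's Lemma applies.

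For~\eqref{injectcoh}, I take $\cK^\bullet:=Rf_{(v),*}\cF^\bullet$ and apply Leray. Working over the open $\cS\subset\cB$ on which~$m$ is invertible, Deligne's finiteness theorem makes $Rf_*\cF^\bullet$ bounded constructible. Shrinking to a dense open $U\subset\cS$ on which it is LCC and over which generic base change holds, the restriction $Rf_{(v),*}\cF^\bullet$ is a bounded LCC complex for every $v\in U$, and Saito's Lemma provides the injectivity. Case~\eqref{injectcohc} is parallel with $Rf_!$ in place of $Rf_*$, using the base change and constructibility-preservation statements \cite{sga4}~XVII.5.2.6, 5.3.6 already cited in the paper.

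For~\eqref{injectext}, I would unwind both Ext-groups into the same form. Set $\cG:=Rf_{(v),!}\cF^\bullet$. Applying \cref{dualgl} (legitimate with $\beta$ prime to~$m$, hence harmless since $\cF^\bullet$ is $m$-torsion), followed by the identifications $\Z^c_{\Spec\O_{(v)}}(0)\cong\G_m[1]$ and $\Z^c_{\Spec k_{(v)}}(-1)\cong\G_m[-1]$ of~\cref{zc=gm}, and noting that base change for $Rf_!$ gives $R\tilde f_{v,!}\cF^\bullet\cong j^*\cG$ (with $j:\Spec k_{(v)}\to\Spec\O_{(v)}$), both sides are rewritten as $\Ext^{n-1}(-,\G_m)$ on $\Spec\O_{(v)}$ and $\Spec k_{(v)}$ respectively. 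Next, \cref{wechselauftorsion} combined with the mod-$m$ identifications $\Z^c_{\Spec\O_{(v)}}(0)/m\cong\mu_m[2]$ and $\Z^c_{\Spec k_{(v)}}(-1)/m\cong\mu_m$ of~\cref{identifyZc} converts these into $\Ext^{n-1}_{\Z/m\Z}(-,\mu_m)$, and \cref{extdegen} identifies the latter with the hypercohomology $H^{n-1}$ of $\IHom_{\Z/m\Z}(\cG,\mu_m)$ over the respective base. Arguing as in~\eqref{injectcohc}, for almost all $v$ the complex $\cG$ is LCC on $\Spec\O_{(v)}$, hence so is $\IHom_{\Z/m\Z}(\cG,\mu_m)$, and Saito's Lemma applies.

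The main obstacle is the bookkeeping in case~\eqref{injectext}: one has to carefully track the several degree shifts (in particular the $[2]$ coming from~\eqref{twistversch}) and verify that the chain of identifications intertwines the geometric restriction $\cX_{(v)}\to X_{(v)}$ with the pullback $\Spec\O_{(v)}\to\Spec k_{(v)}$ to which Saito's Lemma is applied.
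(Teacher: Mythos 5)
Your overall strategy coincides with the paper's: reduce each injectivity to Saito's \cref{s1.3} applied to a bounded, locally constant, constructible complex on $\Spec\O_{(v)}$, produced by pushing forward to the base $\cB$ and shrinking to a dense open where the pushforward is LCC. The treatment of \eqref{injectcoh} is identical. For \eqref{injectcohc} the paper instead reduces to \eqref{injectcoh} via a compactification $j\colon\cX_{(v)}\hookrightarrow\overline{\cX_{(v)}}$ and applies \eqref{injectcoh} to $j_!\cF^\bullet$; your direct route via $Rf_!$ and proper base change is equally valid and arguably more uniform.

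For \eqref{injectext} there is a genuine gap, and it is exactly the point you flag as the ``main obstacle'': you apply \cref{dualgl}, \cref{wechselauftorsion}, \cref{identifyZc}, and \cref{extdegen} \emph{separately} over $\Spec\O_{(v)}$ and over $\Spec k_{(v)}$, and you then need the two resulting isomorphisms to intertwine the given restriction map with the pullback to which Saito's Lemma is applied. This is not a matter of degree bookkeeping; it requires a base-change compatibility for $R\IHom(-,\Z^c(\cdot))$ under the open immersion $\Spec k_{(v)}\hookrightarrow\Spec\O_{(v)}$ (and under henselization), which does not follow formally from the naturality of \cref{dualgl} as stated. The paper resolves this precisely with \cref{extnummer2}: it works at the sheaf level on $\cB$, identifying
\[
Rf_*R\IHom_{\cX}(\cF^\bullet,\Z^c_{\cX}(0))\ \cong\ R\IHom_{\cB,\Z/m\Z}(Rf_!\cF^\bullet,\mu_m),
\]
observes the left side is bounded constructible and hence LCC on a dense open $\cW\subset\cB[1/m]$, applies \cref{s1.3} to $\iota_v^*Rf_*R\IHom_\cX(\cF^\bullet,\Z^c_\cX(0))$, and then uses \cref{extnummer2} to identify this pullback with $Rf_{v*}R\IHom_{\cX_{(v)}}(\cF^\bullet,\Z^c_{\cX_{(v)}}(0))$, so that the Saito injection \emph{is} the map \eqref{injectext} by construction. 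Your argument becomes complete if you similarly perform the identification once, globally on $\cB$, and only then restrict — invoking \cref{extnummer2} — rather than running the chain of lemmas independently on the two local bases.
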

\begin{proof} By \cite[Thm.\,1.1]{fin}, there is a nonempty open subscheme $\cW\subset \cB$ such that $m$ is invertible on $\cW$ and the restriction of $Rf_*\cF^\bullet$ to $\cW$ is locally constant, constructible.  Let $\iota_v: \Spec \O_{(v)} \to \cB$ be the natural morphism. Then the assumptions of  \cref{s1.3} are satisfied for $\iota_v^*Rf_*\cF^\bullet$ showing that
\begin{equation}\label{inject66}
H^n_\et(\O_{(v)},Rf_*\cF^\bullet) \to H^n_\et(k_{(v)}, Rf_*\cF^\bullet)
\end{equation}
is injective for all $v\in \cW$.  Consider the fibre product diagram
\[
\begin{tikzcd}
\cX_{(v)}\rar{\iota_\cX}\arrow[d,"f_v"'] &\cX\dar{f}\\
\Spec \O_{(v)}\rar{\iota_v}&  \cB.
\end{tikzcd}
\]
Since \'{e}tale cohomology  commutes with inverse limits of quasi-compact, quasi-separated schemes and affine transition maps \cite[VII,\,5.8]{sga4}, we obtain an isomorphism
\begin{equation}
\iota_v^* Rf_*\cF^\bullet \cong Rf_{v *} \iota_\cX^* \cF^\bullet.
\end{equation}
Hence the injectivity of (\ref{inject66}) implies the injectivity of (\ref{injectcoh}) for $v\in \cW$.

The injectivity of (\ref{injectcohc}) follows by choosing a compactification $j: \cX_{(v)} \hookrightarrow \overline{\cX_{(v)}}$ and applying (\ref{injectcoh}) to $j_! \cF^\bullet$.

To show the injectivity of (\ref{injectext}), we first show that $Rf_*R\IHom_{\cX}(\cF^\bullet, \Z^c_{\cX}(0))$ is  bounded and constructible on a non-empty open subscheme  $\cW\subset \cB$.
Using \cref{dualgl}, \cref{wechselauftorsion} and  \cref{zc=gm}, we obtain
\begin{eqnarray*}
Rf_*R\IHom_{\cX}(\cF^\bullet,\Z^c_{\cX}(0))  &\cong&R\IHom_{\cB}(Rf_! \cF^\bullet,\Z^c_{\cB}(0)) \\
   &\cong& R\IHom_{\cB,\,\Z/m\Z}(Rf_! \cF^\bullet,\Z^c_{\cB}(0)/m[-1]) \\
   &\cong& R\IHom_{\cB,\,\Z/m\Z}(Rf_! \cF^\bullet,\mu_m).
\end{eqnarray*}
By \cite[XVII, Thm.\,5.3.6]{sga4}, $Rf_! \cF^\bullet$ is bounded and constructible on $\cB$. By \cite[Cor.\,1.6]{fin}, it follows that $R\IHom_{\cB,\,\Z/m\Z}(Rf_! \cF^\bullet,\mu_m)$ is bounded and constructible on $\cB[1/m]$.
We conclude that $Rf_*R\IHom_{\cX}(\cF^\bullet,\Z^c_{\cX}(0))$ is locally constant, constructible on a nonempty open subscheme $\cW\subset\cB[1/m]$. Applying \cref{s1.3} to $\iota_v^*Rf_*R\IHom_{\cX}(\cF^\bullet,\Z^c_{\cX}(0))$, we see that
\begin{multline}\label{inject77}
H^{n-2}_\et (\Spec \O_{(v)}, \iota_v^*Rf_*R\IHom_{\cX}(\cF^\bullet,\Z^c_{\cX}(0))) \to \\
  H^{n-2}_\et (\Spec k_{(v)}, \iota_v^*Rf_*R\IHom_{\cX}(\cF^\bullet,\Z^c_{\cX}(0)))
\end{multline}
is injective for all $v\in \cW$.
By \cref{extnummer2}, we obtain an isomorphism
\begin{equation}
\iota_v^*Rf_*R\IHom_{\cX}(\cF^\bullet,\Z^c_{\cX}(0)) \cong Rf_{v *} R\IHom_{\cX_{(v)}}(\iota_\cX^*\cF^\bullet,\Z^c_{\cX_{(v)}}(0)).
\end{equation}
Hence for $v\in \cW$, (\ref{inject77}) can be written as the injection
\begin{equation}
\Ext_{\cX_{(v)}}^{n-2}(\cF^\bullet,\Z^c_{\cX_{(v)}}(0))\hookrightarrow \Ext_{X_{(v)}}^{n-2}(\cF^\bullet,\Z^c_{\cX_{(v)}}(-1))=\Ext_{X_{(v)}}^n(\cF^\bullet,\Z^c_{X_{(v)}}(-1)).
\end{equation}
This finishes the proof.
\end{proof}

For a set of places $S$ and a finite subset $T\subset S$ we set
\begin{equation}
M^n_T(X,S,\cF^\bullet) = \prod_{v \in T} \widehat{\Ext}^n_{X_{(v)}}(\cF^\bullet, \Z^c_{X_{(v)}}(-1)) \times \prod_{v\in S\smallsetminus T} \Ext^{n-2}_{\cX_{(v)}}(\cF^\bullet, \Z^c_{\cX_{(v)}}(0)).
\end{equation}

\begin{corollary}\label{limesM} If we endow $M^n_T(X,S,\cF^\bullet)$ with the (compact) product topology, then there is a natural topological isomorphism
\begin{equation}
\ds Q^n_S(X, \cF^\bullet )\cong \varinjlim_{\substack{T\subset S\\T\, \mathrm{ finite}}}\ M^n_T(X,S,\cF^\bullet).
\end{equation}
\end{corollary}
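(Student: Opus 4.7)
The plan is to interpret the right-hand side as the standard directed-union presentation of the restricted product of \cref{QSdef}. The only non-formal ingredient needed is \cref{slemma}(\ref{injectext}), which ensures that for almost all $v$ the restriction map
\[
r_v\colon \Ext_{\cX_{(v)}}^{n-2}(\cF^\bullet,\Z^c_{\cX_{(v)}}(0))\lang \Ext_{X_{(v)}}^n(\cF^\bullet,\Z^c_{X_{(v)}}(-1))
\]
(whose image equals the unramified subgroup $\Ext^n_{X_{(v)},\nr}$ by the definition given just before \cref{QSdef}) is injective.

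First, I would fix a finite $T_0\subset S$ such that $r_v$ is injective for every $v\in S\smallsetminus T_0$; for such $v$ the map $r_v$ identifies its source, as a discrete group, with the open subgroup $\Ext^n_{X_{(v)},\nr}\subset\widehat{\Ext}^n_{X_{(v)}}$. For each finite $T\subset S$ I then construct a continuous homomorphism
\[
\phi_T\colon M^n_T(X,S,\cF^\bullet)\lang Q^n_S(X,\cF^\bullet)
\]
that is the identity on the coordinates $v\in T$ and that applies $r_v$ on the coordinates $v\in S\smallsetminus T$. The image lies in the restricted product because the $v\notin T$ coordinates are sent into the unramified subgroups by definition, and the maps $\phi_T$ are compatible with the natural transition maps of the system $(M^n_T)_T$, which are given by the identity on fixed coordinates and by $r_v$ on the newly absorbed ones.

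The main step is to verify that for $T\supset T_0$ the map $\phi_T$ is a topological isomorphism onto the open subgroup
\[
U_T\ :=\ \prod_{v\in T}\widehat{\Ext}^n_{X_{(v)}}(\cF^\bullet,\Z^c_{X_{(v)}}(-1))\times \prod_{v\in S\smallsetminus T}\Ext^n_{X_{(v)},\nr}(\cF^\bullet,\Z^c_{X_{(v)}}(-1))\subset Q^n_S(X,\cF^\bullet).
\]
Factorwise $\phi_T$ is either the identity ($v\in T$) or the bijection $r_v$ onto the unramified part ($v\notin T$). All factors carry the discrete topology and both sides carry the product topology, so a factorwise homeomorphism yields a homeomorphism of the product.

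Finally, I would pass to the direct limit over the cofinal subsystem of those finite $T$ containing $T_0$. By the general description of the restricted-product topology recalled at the beginning of this section, one has $Q^n_S(X,\cF^\bullet)=\varinjlim_{T\supset T_0} U_T$ as topological groups with open inclusions as transition maps. Combining this with the isomorphisms $\phi_T\colon M^n_T\liso U_T$ for $T\supset T_0$ and noting that $\{T\supset T_0\}$ is cofinal in the index set of all finite $T\subset S$ produces the asserted topological isomorphism. No substantive obstacle arises beyond \cref{slemma}; the rest of the argument is pure formal bookkeeping about restricted products and cofinality.
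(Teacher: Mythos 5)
Your proof is correct and follows the same route as the paper: the paper's own proof is the one-line observation that the claim "follows directly from \cref{slemma} and the definition of the topology of the restricted product," which is exactly the argument you spell out (using \cref{slemma} to identify $\Ext^{n-2}_{\cX_{(v)}}$ with the unramified subgroup for $v$ outside a finite $T_0$, then invoking cofinality of $\{T\supset T_0\}$ to match the colimit with the restricted-product presentation). Your elaboration of the factorwise topological check and the cofinality step fills in the bookkeeping the paper leaves implicit.
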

\begin{proof}
This follows directly from  \cref{slemma} and the definition of the topology of the restricted product.
\end{proof}

\begin{definition}\label{defunramifiedcoh}
For non\-archi\-me\-dean~$v$ we define the unramified part
\begin{equation}
H^n_{\nr}(X_{(v)}, \cF^\bullet) \subset H^n_\et(X_{(v)}, \cF^\bullet)
\end{equation}
as the image of the restriction map
$H^n_\et(\cX_{(v)}, \cF^\bullet) \to H^n_\et(X_{(v)}, \cF^\bullet)$
and define
\begin{equation}
P^n_S(X,\cF^\bullet):= \resprod_{v\in S} \hat H^n_\et(X_{(v)},\cF^\bullet)
\end{equation}
as the restricted product with respect to the unramified subgroup (defined for nonarchimedean $v$). We define $P^n_{S,c}(X,\cF^\bullet)$ in exactly the same way but using modified cohomology with compact support everywhere.
\end{definition}

Again $P^n_S(X,\cF^\bullet)$ and $P^n_{S,c}(X,\cF^\bullet)$ only depend on $X$ and not of the choice of~$\cX$. They are locally compact abelian groups and  \cref{slemma} shows
\begin{corollary}\label{limesP} There are natural topological isomorphisms
\begin{equation}
\ds P^n_S(X, \cF^\bullet )\cong \varinjlim_{\substack{T\subset S\\T\, \mathrm{ finite}}} \quad \prod_{v\in T} H^n_\et(X_{(v)}, \cF^\bullet) \times  \prod_{v\in S\smallsetminus T} H^n_{\et}(\cX_{(v)}, \cF^\bullet),
\end{equation}
\begin{equation}
\ds P^n_{S,c}(X, \cF^\bullet )\cong \varinjlim_{\substack{T\subset S\\T\, \mathrm{ finite}}} \quad \prod_{v\in T} \hat H^n_c(X_{(v)}, \cF^\bullet) \times  \prod_{v\in S\smallsetminus T} H^n_{c}(\cX_{(v)}, \cF^\bullet) .
\end{equation}
\end{corollary}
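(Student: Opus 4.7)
The plan is to mimic the proof of \cref{limesM}, exploiting the parallel structure: in both cases the restricted product is taken with respect to a subgroup defined as the \emph{image} of a restriction from the integral model, and \cref{slemma} tells us that this restriction is injective for almost every place. So the strategy reduces to three formal observations.

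First, I would unwind the definition of $P^n_S(X,\cF^\bullet)$ as a restricted product. By the general identification recorded in \cref{sec1}, we have a natural algebraic and topological isomorphism
\[
\resprod_{v\in S}\bigl(\hat H^n_\et(X_{(v)},\cF^\bullet),\ H^n_\nr(X_{(v)},\cF^\bullet)\bigr)
\ \cong\ \varinjlim_{\substack{T\subset S\\T\text{ finite}}}\Bigl(\prod_{v\in T}\hat H^n_\et(X_{(v)},\cF^\bullet)\times \prod_{v\in S\smallsetminus T}H^n_\nr(X_{(v)},\cF^\bullet)\Bigr),
\]
where $T$ is required to contain $S_\infty$ and any finite set of bad places. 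The same statement holds verbatim with modified compact support replacing étale cohomology.

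The main step is then to replace the factor $H^n_\nr(X_{(v)},\cF^\bullet)$ by $H^n_\et(\cX_{(v)},\cF^\bullet)$ for $v$ outside a suitable finite exceptional set. By the very definition (\cref{defunramifiedcoh}), $H^n_\nr(X_{(v)},\cF^\bullet)$ is the image of the restriction map $H^n_\et(\cX_{(v)},\cF^\bullet)\to H^n_\et(X_{(v)},\cF^\bullet)$. By \cref{slemma}(\ref{injectcoh}), this restriction map is injective for almost all $v$, so it identifies the source with its image, yielding a canonical isomorphism $H^n_\et(\cX_{(v)},\cF^\bullet)\cong H^n_\nr(X_{(v)},\cF^\bullet)$ for almost all $v$. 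Substituting this into the direct limit above gives the first asserted isomorphism. For the compact support version the argument is identical, using \cref{slemma}(\ref{injectcohc}) in place of (\ref{injectcoh}).

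There is no serious obstacle here; the only thing to verify is that the bijection of algebraic direct limits is also a homeomorphism. This is automatic, since the discrete topology on each factor $H^n_\et(\cX_{(v)},\cF^\bullet)$ is transported to the discrete topology on $H^n_\nr(X_{(v)},\cF^\bullet)\subset \hat H^n_\et(X_{(v)},\cF^\bullet)$, and the latter is precisely the topology used to form the restricted product; the finite products appearing in the direct system therefore carry the product topology on both sides, and the direct limit topologies agree. Exactly the same topological check works for compactly supported cohomology, and independence of the choice of model $\cX$ is verified as in \cref{QSdef} by shrinking to a common open subscheme of $\cB$ where the two extensions of $\cF^\bullet$ agree.
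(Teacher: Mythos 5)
Your proposal is correct and follows exactly the route the paper takes: Corollary~\ref{limesP} is stated as an immediate consequence of Lemma~\ref{slemma} (injectivity of the restriction maps (\ref{injectcoh}) and (\ref{injectcohc}) for almost all $v$, identifying $H^n_\et(\cX_{(v)},\cF^\bullet)$ resp.\ $H^n_c(\cX_{(v)},\cF^\bullet)$ with the unramified subgroups) together with the description of the restricted product as a filtered colimit of open finite-level products recalled in Section~6. The topological verification you add is sound but essentially automatic here, since all the local groups involved are finite and hence discrete.
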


Finally, we observe

\begin{proposition}\label{QdualzuPc}
The groups $P^n_{S,c}(X, \cF^\bullet )$ and $Q^{3-n}_S(X,\cF^\bullet)$ are Pontryagin dual to each other.
\end{proposition}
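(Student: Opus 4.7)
The plan is to identify both sides as restricted products of finite Pontryagin-dual groups and apply the standard duality of restricted products. First I would invoke the general fact (cf.~\cite[(1.1.14)]{NSW}) that for any family $(A_v,B_v)_{v\in S}$ of locally compact Hausdorff abelian groups with open subgroups $B_v\subset A_v$ (for almost all $v$), one has a natural topological isomorphism
\[
\Bigl(\resprod_{v\in S}(A_v,B_v)\Bigr)^{\vee}\cong\resprod_{v\in S}(A_v^\vee,B_v^\perp),
\]
where $B_v^\perp\subset A_v^\vee$ is the annihilator of $B_v$ under the Pontryagin pairing. I apply this with $A_v=\hat H^n_c(X_{(v)},\cF^\bullet)$, which by \cref{Duality over local fields} is finite with Pontryagin dual $A_v^\vee=\widehat{\Ext}^{3-n}_{X_{(v)}}(\cF^\bullet,\Z^c_{X_{(v)}}(-1))$, and with $B_v$ the image of $H^n_c(\cX_{(v)},\cF^\bullet)$ in $\hat H^n_c(X_{(v)},\cF^\bullet)$, which is a subgroup for almost all nonarchimedean $v$ by \cref{slemma}. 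The proposition thus reduces to the place-wise claim that $B_v^\perp$ coincides with the image of $\Ext^{1-n}_{\cX_{(v)}}(\cF^\bullet,\Z^c_{\cX_{(v)}}(0))\to\widehat{\Ext}^{3-n}_{X_{(v)}}(\cF^\bullet,\Z^c_{X_{(v)}}(-1))$ for almost all $v$.

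To establish this identification, I would compare two long exact sequences, both arising from the localization triangle at the closed point $b$ of $\cB_v=\Spec\O_{(v)}$. Excision applied to $Rf_!\cF^\bullet$, together with proper base change along $\Spec k_{(v)}\hookrightarrow\cB_v$, yields
\[
\cdots\to H^n_{\{b\}}(\cB_v,Rf_!\cF^\bullet)\to H^n_c(\cX_{(v)},\cF^\bullet)\to\hat H^n_c(X_{(v)},\cF^\bullet)\xrightarrow{\delta}H^{n+1}_{\{b\}}(\cB_v,Rf_!\cF^\bullet)\to\cdots,
\]
so $B_v=\ker\delta$. Dually, applying $R\Hom_{\cB_v}(Rf_!\cF^\bullet,-)$ to the localization triangle $i_*\Z^c_b(0)\to\Z^c_{\cB_v}(0)\to Rj_*\Z^c_{k_{(v)}/\cB_v}(0)$, and invoking \cref{dualgl} together with the shift identity $\Z^c_{k_{(v)}/\cB_v}(0)\cong\Z^c_{k_{(v)}}(-1)[2]$ from~(\ref{twistversch}), produces
\[
\cdots\to\Ext^{1-n}_{\cX_{(v)}}(\cF^\bullet,\Z^c_{\cX_{(v)}}(0))\xrightarrow{\partial}\widehat{\Ext}^{3-n}_{X_{(v)}}(\cF^\bullet,\Z^c_{X_{(v)}}(-1))\to\Ext^{2-n}_{\cX_b}(\cF^\bullet|_{\cX_b},\Z^c_{\cX_b}(0))\to\cdots,
\]
so that the unramified subgroup in question is $\im\partial$.

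Finally, I would argue that $\delta$ and $\partial$ are Pontryagin duals of one another. Indeed, \cref{Duality over local fields} identifies $\hat H^n_c(X_{(v)},\cF^\bullet)$ with the Pontryagin dual of $\widehat{\Ext}^{3-n}_{X_{(v)}}(\cF^\bullet,\Z^c_{X_{(v)}}(-1))$, while \cref{Local duality} identifies $H^{n+1}_{\{b\}}(\cB_v,Rf_!\cF^\bullet)$ with the Pontryagin dual of $\Ext^{1-n}_{\cX_{(v)}}(\cF^\bullet,\Z^c_{\cX_{(v)}}(0))$. Since both identifications descend from the single duality isomorphism of \cref{dualgl} on $\cB_v$ applied to the same localization triangle, they are compatible with the connecting morphisms, and hence $\delta=\partial^{\vee}$. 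Taking annihilators yields $B_v=\ker\delta=(\im\partial)^\perp$, which is the required statement.

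The main obstacle is precisely the rigorous verification of this last compatibility, namely that the Pontryagin duality isomorphisms furnished by \cref{dualgl}, \cref{Duality over local fields}, and \cref{Local duality} respect connecting maps of distinguished triangles. This is essentially formal, since both long exact sequences above originate from the same localization triangle via adjoint functors, but it does require some care in tracing through the constructions of Sections~\ref{sec2} and~\ref{sec4}.
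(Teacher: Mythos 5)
Your plan is a genuinely different route from the paper's. You reduce to a place-wise annihilator computation, identify the unramified subgroups as a kernel of a connecting map and an image of a localization map respectively, and then try to show these two maps are Pontryagin dual. The paper takes a more economical path. It starts from the same local duality pairing, but establishes exact annihilation in two elementary steps: (i) the two unramified subgroups annihilate each other because the pairing restricted to them factors through $\Br(\O_{(v)})=0$; and (ii) a counting argument — using the short exact sequence $0\to H^n_\et(\O_{(v)},Rf_!\cF^\bullet)\to H^n_\et(k_{(v)},Rf_!\cF^\bullet)\to H^{n+1}_v(\O_{(v)},Rf_!\cF^\bullet)\to 0$ valid for almost all $v$ by \cref{s1.3}, together with \cref{Local duality} applied to $H^{n+1}_v$ — to show $\# H^n_c(X_{(v)},\cF^\bullet)=\# H^n_c(\cX_{(v)},\cF^\bullet)\cdot\#\Ext^{1-n}_{\cX_{(v)}}(\cF^\bullet,\Z^c_{\cX_{(v)}}(0))$, which forces exactness. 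That entirely avoids the delicate point you correctly flag as your "main obstacle."

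That obstacle is real, not merely a matter of bookkeeping: showing $\delta=\partial^\vee$ amounts to proving that the Tate duality pairing on the generic fibre (valued in $\Br(k_{(v)})$, \cref{Duality over local fields}) and Milne's local duality on the integral model (valued in $H^3_{\{b\}}(\cB_v,\G_m)$, \cref{Local duality}) are compatible under the connecting map $\Br(k_{(v)})\to H^3_{\{b\}}(\cB_v,\G_m)$, and further that the identifications with $\Z^c(-1)$ and $\Z^c(0)$ coming from \cref{dualgl} commute with the localization triangle of \cref{localization}. None of this is hard in spirit, but it is not purely formal either — it rests on the local reciprocity compatibility and on tracing through the cycle-class map constructions of \cref{sec4}. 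So your proof would require an additional lemma; as written it has a gap exactly where you said it did. If you prefer your conceptual route, the gap can be filled, but note that the paper's combination of (i) the vanishing of $\Br(\O_{(v)})$ for annihilation and (ii) order counting via \cref{s1.3} for exactness reaches the same conclusion with considerably less machinery and no compatibility diagram to verify.
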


\begin{proof}
By \cref{Duality over local fields}, we have for all $v\in S$  a perfect pairing of finite groups
\begin{equation}\label{locdualpairing}
\hat H^n_c(X_{(v)}, \cF^\bullet) \times \widehat{\Ext}_{X_{(v)}}^{3-n}(\cF^\bullet,\Z^c_{X_{(v)}}(-1))\lang \Br(k_{(v)})\stackrel{\inv}\lang \Q/\Z.
\end{equation}
It therefore suffices to show that for almost all $v\in S$ the respective unramified subgroups are their exact annihilators in (\ref{locdualpairing}). First of all, they annihilate each other because the pairing (\ref{locdualpairing})  restricted to the subgroups  $H^n_c(\cX_{(v)}, \cF^\bullet)$ and $\Ext_{\cX_{(v)}}^{1-n}(\cF^\bullet,\Z^c_{\cX_{(v)}}(0))$ factors through $\Br(\O_{(v)})=0$.

On the other hand, $Rf_! \cF^\bullet$ is bounded and constructible on $\cB$ by \cite[XVII, Thm.\,5.3.6]{sga4}. Hence for almost all $v\in S$ the assumptions of  \cref{s1.3} are satisfied and the long exact sequence with support for $\Spec k_{(v)}\subset \Spec \O_{(v)}$  and $Rf_! \cF^\bullet$ splits  into short exact sequences
\begin{equation*}
0 \lang H^n_\et(\O_{(v)},Rf_! \cF^\bullet) \lang H^n_\et(k_{(v)},Rf_! \cF^\bullet)\lang H^{n+1}_v(\O_{(v)},Rf_!\cF^\bullet) \to 0.
\end{equation*}
Now we observe that
\[
\begin{array}{rcl}
H^n_c(\cX_{(v)}, \cF^\bullet)&=& H^n_\et(\O_{(v)},Rf_! \cF^\bullet),\\
H^n_c(X_{(v)}, \cF^\bullet)&=& H^n_\et(k_{(v)},Rf_! \cF^\bullet),
\end{array}
\]
and that $H^{n+1}_v(\O_{(v)},Rf_!\cF^\bullet)\cong \Ext_{\cX_{(v)}}^{1-n}(\cF^\bullet,\Z^c_{\cX_{(v)}}(0))^\vee$ by \cref{Local duality}. We obtain
\[
\# H^n_c(X_{(v)}, \cF^\bullet) = \# H^n_c(\cX_{(v)}, \cF^\bullet) \cdot  \# \Ext_{\cX_{(v)}}^{1-n}(\cF^\bullet,\Z^c_{\cX_{(v)}}(0)),
\]
and this equality of orders shows that the groups are their exact annihilators.
\end{proof}
\section{The main results}

\begin{theorem}\label{PText} Let $S\supset S_\infty$ be a (not necessarily finite) set of places of the global field $k$ and let $\cX_\cS \to \cS=\Spec \O_S$ be a separated scheme of finite type. Let $m\ge 1$ be an integer prime to $p=\ch k$ and let $\cF^\bullet$ be a bounded complex of constructible sheaves of\/ $\Z/m\Z$-modules on $(\cX_\cS)_\et$.

Then there is a natural long exact sequence of topological groups and strict homomorphisms
\[
\cdots \to \Ext^{n+1}_{\cX_\cS}(\cF^\bullet, \Z^c_{\cX_\cS}(0))\stackrel{\lambda_n}{\to} Q^{n+3}_S(X_k,\cF^\bullet)
  \to H^{-n}_c(\cX_\cS, \cF^\bullet)^\vee \to \cdots .
\]
The groups $\Ext^{n+1}_{\cX_\cS}(\cF^\bullet, \Z^c_{\cX_\cS}(0))$ are discrete, the groups $Q^{n+3}_S(X_k,\cF^\bullet)$ are locally compact, and the groups $H^{-n}_c(\cX_\cS, \cF^\bullet)^\vee$ are compact for all $n\in \Z$. Furthermore, the maps $\lambda_n$ are proper and have finite kernel for all $n\in \Z$.
\end{theorem}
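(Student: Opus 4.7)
The plan is to derive the long exact sequence first for finite $S$ and then extend to general $S$ by filtered colimit. I first fix a compactification $\cX\to\cB=\Spec\O_\varnothing$ of $\cX_\cS\to\cS$, and set $\tilde\cF^\bullet:=j_!\cF^\bullet$ for the open immersion $j\colon\cX_\cS\hookrightarrow\cX$. The adjunction $j_!\dashv j^*$ together with the open localization $j^*\Z^c_\cX(0)=\Z^c_{\cX_\cS}(0)$ identifies $\Ext^*_\cX(\tilde\cF^\bullet,\Z^c_\cX(0))\cong\Ext^*_{\cX_\cS}(\cF^\bullet,\Z^c_{\cX_\cS}(0))$ and $\hat H^*_c(\cX,\tilde\cF^\bullet)\cong\hat H^*_c(\cX_\cS,\cF^\bullet)$.

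For finite $S$, I would apply \cref{verglohnec} to $\cB$ with $T=S_\infty$ and coefficient complex $\cG^\bullet:=Rf_!\tilde\cF^\bullet$ on $\cB$ (bounded constructible by \cite[XVII,\,5.3.6]{sga4}). Via base change, the local terms identify with $\hat H^n_c(X_{(v)},\cF^\bullet)$: for archimedean $v\in S_\infty$ this is direct, and for nonarchimedean $v\in S\setminus S_\infty$, since $\tilde\cF^\bullet|_{\cX_{(v)}}$ has zero closed-fibre restriction ($v\notin\cS$), the localization triangle collapses $H^{n+1}_v(\cB,\cG^\bullet)$ onto $H^n_c(X_{(v)},\cF^\bullet)$. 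The resulting exact sequence
\[
\cdots\to\hat H^n_c(\cX_\cS,\cF^\bullet)\to H^n(\cS,Rg_!\cF^\bullet)\to P^n_{S,c}(X,\cF^\bullet)\to\hat H^{n+1}_c(\cX_\cS,\cF^\bullet)\to\cdots
\]
I would then combine, via an octahedral diagram, with \eqref{comparedc} for $\cX_\cS$ (cofibre the archimedean summand of $P^n_{S,c}$) and with the $j_!\to Rj_*$ triangle for $\cS\hookrightarrow\cB$ (cofibre the nonarchimedean summand) to yield
\[
\cdots\to\hat H^n_c(\cX_\cS,\cF^\bullet)\to H^n_c(\cX_\cS,\cF^\bullet)\to P^n_{S,c}(X,\cF^\bullet)\to\hat H^{n+1}_c(\cX_\cS,\cF^\bullet)\to\cdots.
\]
Pontryagin-dualizing this, I would use \cref{artinverdier} to identify $\hat H^r_c(\cX_\cS,\cF^\bullet)^\vee$ with $\Ext^{2-r}_{\cX_\cS}(\cF^\bullet,\Z^c_{\cX_\cS}(0))$ and \cref{QdualzuPc} to identify $P^n_{S,c}(X,\cF^\bullet)^\vee$ with $Q^{3-n}_S$; reindexing $n\mapsto -n$ yields the claimed sequence for finite $S$.

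For general $S$, I would write $S$ as a filtered union of finite $T\supset S_\infty$. The global $\Ext$ and $H^*_c$ terms are independent of $T$, while the local direct sums colimit to $Q^{n+3}_S$ by \cref{limesM}; filtered colimits preserve exactness. Discreteness of the $\Ext$ groups, local compactness of $Q^{n+3}_S$, and compactness of $H^{-n}_c(\cX_\cS,\cF^\bullet)^\vee$ follow from the topological structure of the building blocks. Strictness, finiteness of $\ker\lambda_n$, and properness of $\lambda_n$ follow from \cref{slemma} and the finiteness of local $\widehat\Ext$'s (\cref{Duality over local fields}, \cref{Local duality}).

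The hard part will be the octahedral combination producing the second sequence above from the three input triangles: the \cref{verglohnec} output, the comparison \eqref{comparedc}, and the $j_!\to Rj_*$ localization for $\cS\hookrightarrow\cB$. One has to verify commutativity of a diagram of distinguished triangles and match the archimedean and nonarchimedean summands of the decomposition $P^n_{S,c}=\hat H^n_c((\cX_\cS)_\bR,\cF^\bullet)\oplus\bigoplus_{v\in S\setminus S_\infty} H^n_c(X_{(v)},\cF^\bullet)$ to the respective cofibres. Careful bookkeeping of the Tate-twist shift \eqref{twistversch} and the adjunction $j_!\dashv j^*$ in the local-to-global identifications is also essential.
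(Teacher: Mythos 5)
Your approach diverges from the paper's in a way that breaks down precisely where the theorem becomes nontrivial, namely for infinite $S$. The paper extends $\cF^\bullet$ from $\cX_\cS$ to $\cX$ by \emph{spreading out} a constructible complex, so that the extension has (generically) nonzero stalks over the closed points $v\in S\smallsetminus T$. Your $\tilde\cF^\bullet=j_!\cF^\bullet$ forces all those stalks to vanish. As a result, for every $v$ removed from $\cS$ the localization triangle collapses $H^{n+1}_v(\cT,\cG^\bullet)$ onto the \emph{full} local group $H^n_c(X_{(v)},\cF^\bullet)$, never onto the unramified subgroup. Equivalently, by adjunction and \eqref{twistversch}, $\Ext^{n+1}_{\cX_{(v)}}(j'_!\cF^\bullet,\Z^c_{\cX_{(v)}}(0))\cong\widehat\Ext^{n+3}_{X_{(v)}}(\cF^\bullet,\Z^c_{X_{(v)}}(-1))$, so the putative ``unramified part'' computed from $j_!$ is the whole group. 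Therefore your $L^n_T$ is the direct sum $\bigoplus_{v\in S}\hat H^n_c(X_{(v)},\cF^\bullet)$ independently of $T$, and Pontryagin-dualizing yields the \emph{full} product $\prod_{v\in S}\widehat{\Ext}^{n+3}_{X_{(v)}}(\cF^\bullet,\Z^c_{X_{(v)}}(-1))$, which for infinite $S$ is a compact group strictly larger than the locally compact restricted product $Q^{n+3}_S(X_k,\cF^\bullet)$. Your appeal to \cref{limesM} is a misapplication: that lemma describes $Q^n_S$ as a colimit of the groups $M^n_T$, which carry the genuine subgroups $\Ext^{n-2}_{\cX_{(v)}}(\cF^\bullet,\Z^c_{\cX_{(v)}}(0))$ at $v\in S\smallsetminus T$; these subgroups are invisible to an extension by zero. (There is also a more basic issue: for infinite $S$ the map $\cX_\cS\to\cX$ is only a pro-open immersion and ``$j_!\cF^\bullet$'' is not a constructible complex on $\cX$ — it would vanish on infinitely many closed fibres.)

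The paper's device is therefore essential: fix a constructible extension $\cF^\bullet$ on $\cX$, apply \cref{verglohnec} to $j^*Rf_!\cF^\bullet$ on $\cT$ with $T$ ranging over finite sets and $S$ kept fixed and possibly infinite. For $v\in S\smallsetminus T$ the term $H^{n+1}_v(\cT,Rf_!\cF^\bullet)$ is identified by \cref{Local duality} with $\Ext^{1-n}_{\cX_{(v)}}(\cF^\bullet,\Z^c_{\cX_{(v)}}(0))^\vee$, whose dual is exactly the unramified subgroup. Dualizing at fixed $T$ gives a sequence of \emph{finite} groups, and only then passing to the colimit over $T$ produces $Q^{n+3}_S$ via \cref{limesM} and $\Ext^{n+1}_{\cX_\cS}$ via \cref{extnummer2}. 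Your argument never invokes \cref{extnummer2}, and the claim that ``the global $\Ext$ and $H^*_c$ terms are independent of $T$'' is exactly where you would need it after correcting the coefficient choice. Finally, the ``octahedral combination'' step is a no-op: in your first display, $H^n(\cS,Rg_!\cF^\bullet)$ already \emph{is} $H^n_c(\cX_\cS,\cF^\bullet)$ by definition of compact-support cohomology over $\cS$, so the second display is the same sequence; there is nothing to combine.
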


\begin{proof}
We choose a separated scheme of finite type $f:\cX\to \cB=\Spec \O_\varnothing$ extending $\cX_\cS$, i.e., such that $\cX_\cS=\cX\times_\cB \cS$. Choosing $\cX$ small enough, we can assume that $\cF^\bullet$ is the restriction of a bounded constructible complex of sheaves of $\Z/m\Z$-modules on $\cX_\et$ to $(\cX_\cS)_\et$. We will denote this complex also by $\cF^\bullet$. We denote the common generic fibre of $\cX$ and $\cX_\cS$ by $X_k$.

In the following, the letter $T$ will always denote a finite subset $T\subset S$ containing all archimedean places and all places $v$ for which the assertion of \cref{slemma} fails. By $j: \cT \to \cB$ we denote the open immersion.

We apply
\cref{verglohnec} to the complex $Rf_!\cF^\bullet|_\cT=j^*Rf_!\cF^\bullet$ on $\cT_\et$ and obtain a long exact sequence
\begin{equation}\label{223}
\cdots \to \hat H^n_c(\cT,Rf_!\cF^\bullet|_\cT) \to H^n_\et(\cS,Rf_!\cF^\bullet|_\cS) \to L^n_T(S, Rf_! \cF^\bullet|_\cT)     \to \cdots  .
\end{equation}
We consider the terms in (\ref{223}). We have
\begin{equation}
\hat H^n_c(\cT,Rf_!\cF^\bullet|_\cT)\cong \hat H^n_c(\cX_\cT, \cF^\bullet) \cong \Ext^{2-n}_{\cX_\cT}(\cF^\bullet, \Z^c_{\cX_\cT}(0))^\vee
\end{equation}
by \cref{artinverdier}.
We have
\begin{eqnarray}
  L^n_T(S, j^*Rf_! \cF^\bullet) &=& \bigoplus_{v\in T}\hat H^n(k_{(v)}, Rf_!\cF^\bullet)  \oplus \bigoplus_{v\in S\smallsetminus T} H^{n+1}_v(\cT,Rf_!\cF^\bullet)\end{eqnarray}
\begin{multline}
 \cong \ \bigoplus_{v\in T}\widehat{\Ext}_{X_{(v)}}^{3-n}(\cF^\bullet,\Z^c_{X_{(v)}}(-1))^\vee\oplus \bigoplus_{v\in S\smallsetminus T} \Ext_{\cX_{(v)}}^{1-n}(\cF^\bullet,\Z^c_{\cX_{(v)}}(0))^\vee  \\
  = M^{3-n}_T(X_k, S, \cF^\bullet)^\vee
\end{multline}
by  \cref{Duality over local fields,Local duality}.
Finally, by \cite[XVII, 5.2.6]{sga4} we have
\begin{equation}
H^n_\et(\cS,Rf_!\cF^\bullet|_\cS) \cong H^n_c(\cX_\cS, \cF^\bullet),
\end{equation}
where the cohomology group on the right hand side is \'{e}tale cohomology with compact support of $\cX_\cS$ as a scheme of finite type over $\cS$. Hence we can write the dual sequence to (\ref{223}) in the form
\begin{equation}\label{mtseq}
\to  \Ext^{n+1}_{\cX_\cT}(\cF^\bullet, \Z^c_{\cX_\cT}(0))\to M^{n+3}_T(S,X_k,\cF^\bullet)
  \to H^{-n}_c(\cX_\cS, \cF^\bullet)^\vee \to \ldots
\end{equation}
(we changed the index $n\mapsto -n$ in order to have a cohomological complex). This is a long exact sequence of compact abelian groups with continuous maps and the Ext-groups are finite.
Passing to the direct limit over all finite $T\subset S$, we obtain using \cref{extnummer2} and \cref{limesM} the long exact sequence
\begin{equation} \label{qseq}
\cdots \to \Ext^{n+1}_{\cX_\cS}(\cF^\bullet, \Z^c_{\cX_\cS}(0))\stackrel{\lambda_n}{\to} Q^{n+3}_S(X_k,\cF^\bullet)
  \to H^{-n}_c(\cX_\cS, \cF^\bullet)^\vee \to \cdots .
\end{equation}
The groups $\Ext^{n+1}_{\cX_\cS}(\cF^\bullet, \Z^c_{\cX_\cS}(0))$ are discrete, the groups $Q^{n+3}_S(X_k,\cF^\bullet)$ are locally compact, and the groups $H^{-n}_c(\cX_\cS, \cF^\bullet)^\vee$ are compact.
Since all homomorphism in (\ref{mtseq}) are continuous, the same is true for the homomorphisms in (\ref{qseq}).

\medskip
Next we prove that all morphisms are strict. For this we have to show that  for any two consecutive maps $\phi$, $\psi$ in the long exact sequence, the continuous bijection $\im(\phi)\kiso \ker(\psi)$ is a  homeomorphism. Here $\im(\phi)$ is equipped with the quotient topology and $\ker(\psi)$ with the subspace topology. The obvious case is
\begin{equation*}
  H^{-n-1}_c(\cX_\cS, \cF^\bullet)^\vee \stackrel{\phi}{\to}\Ext^{n+1}_{\cX_\cS}(\cF^\bullet, \Z^c_\cX(0)) \stackrel{\psi}{\to} Q^{n+3}_S(X_k,\cF^\bullet).
\end{equation*}
Indeed, the group $\ker(\psi)$ is discrete, hence  $\im(\phi)\kiso \ker(\psi)$ must be homeomorphic. Since $H^{-n-1}_c(\cX_\cS, \cF^\bullet)^\vee$ is compact, we also obtain the assertion that $\ker(\psi)=\ker(\lambda_{n})$ is finite for all $n$. Next we consider
\begin{equation*}
 Q^{n+3}_S(X_k,\cF^\bullet)
  \stackrel{\phi}{\to} H^{-n}_c(\cX_\cS, \cF^\bullet)^\vee \stackrel{\psi}{\to} \Ext^{n+2}_{\cX_\cS}(\cF^\bullet, \Z^c_\cX(0)).
\end{equation*}
The image of $\phi$ is the union of the images of the maps
\begin{equation*}
\phi_T: M^{n+3}_T(X_k,S,\cF^\bullet) \to H^{-n}_c(\cX_\cS, \cF^\bullet)^\vee.
\end{equation*}
By (\ref{mtseq}) and the finiteness of $\Ext^{n+2}_{\cX_\cT}(\cF^\bullet, \Z^c_\cX(0))$, each $\im(\phi_T)$ has finite index in $H^{-n}_c(\cX_\cS, \cF^\bullet)^\vee$. Hence the images stabilize, i.e., $\im(\phi_T)=\im(\phi)$ for $T$ large enough. This shows that $\im(\phi)$ is compact and thus homeomorphic to $\ker(\psi)$. Finally, we consider
\begin{equation*}
\Ext^{n+1}_{\cX_\cS}(\cF^\bullet, \Z^c_\cX(0))\stackrel{\phi}{\to} Q^{n+3}_S(X_k,\cF^\bullet)
\stackrel{\psi}{\to}  H^{-n}_c(\cX_\cS, \cF^\bullet)^\vee.
\end{equation*}
Here we have to show that the subgroup $\ker(\psi)$ is discrete. For this it suffices to show that there is an open subgroup of $Q^{n+3}_S(X_k,\cF^\bullet)$ having finite intersection with $\ker(\psi)$. For sufficiently large finite $T\subset S$, $M_T(X_k,S,\cF^\bullet)$ is a subgroup of $Q^{n+3}_S(X_k,\cF^\bullet)$. Since $\Ext^{n+1}_{\cX_\cT}(\cF^\bullet, \Z^c_\cX(0))$ is finite,  any such $M_T(X_k,S,\cF^\bullet)$ has the required property.

Finally, the properness of $\lambda_n$ follows formally from what we already know:
Since $\ker(\lambda_n)$ is finite, it suffices to show that every compact subset of $Q^{n+3}_S(X_k,\cF^\bullet)$ has finite intersection with $\im(\lambda_n)$. But this is obvious since $\im(\lambda_n)$ is a discrete subgroup of $Q^{n+3}_S(X_k,\cF^\bullet)$ by the strictness of $\lambda_n$.
\end{proof}

Dualizing \cref{PText}, we obtain:

\begin{theorem}\label{PTextdual} Let $S\supset S_\infty$ be a (not necessarily finite) set of places of the global field $k$ and let $\cX_\cS \to \cS=\Spec \O_S$ be a separated scheme of finite type. Let $m\ge 1$ be an integer prime to $p=\ch (k)>0$ and let $\cF^\bullet$ be a bounded complex of constructible sheaves of\/ $\Z/m\Z$-modules on $(\cX_\cS)_\et$.

Then there is a natural long exact sequence of topological groups and strict homomorphisms
\[
\cdots \to H^{n}_c(\cX_\cS, \cF^\bullet)\stackrel{\lambda_{n,c}}{\to} P^{n}_{S,c}(X_k,\cF^\bullet)
  \to \Ext^{1-n}_{\cX_\cS}(\cF^\bullet, \Z^c_\cX(0))^\vee \to \cdots .
\]
The groups $H^{n}_c(\cX_\cS, \cF^\bullet)$ are discrete, the groups $P^{n}_{S,c}(X_k,\cF^\bullet)$ are locally compact, and the groups $\Ext^{1-n}_{\cX_\cS}(\cF^\bullet, \Z^c_\cX(0))^\vee$ are compact for all $n\in \Z$. Furthermore, the maps $\lambda_{n,c}$ are proper and have finite kernel for all $n\in \Z$.
\end{theorem}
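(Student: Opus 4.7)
The plan is to derive \cref{PTextdual} by Pontryagin-dualizing the long exact sequence of \cref{PText} and then reindexing. Since every term in the sequence of \cref{PText} is a locally compact Hausdorff abelian group and every arrow is strict, applying $(-)^\vee$ yields another strict long exact sequence of locally compact Hausdorff abelian groups; along the way discrete and compact factors swap places, while local compactness is preserved.

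Abbreviate the sequence of \cref{PText} by
\[
\cdots \to A^n \stackrel{\lambda_n}{\to} B^n \stackrel{\beta_n}{\to} C^n \stackrel{\alpha_{n+1}}{\to} A^{n+1} \to \cdots,
\]
where $A^n = \Ext^{n+1}_{\cX_\cS}(\cF^\bullet, \Z^c_{\cX_\cS}(0))$, $B^n = Q^{n+3}_S(X_k,\cF^\bullet)$ and $C^n = H^{-n}_c(\cX_\cS, \cF^\bullet)^\vee$. Applying Pontryagin duality produces
\[
\cdots \to (A^{n+1})^\vee \stackrel{\alpha_{n+1}^\vee}{\to} (C^n)^\vee \stackrel{\beta_n^\vee}{\to} (B^n)^\vee \stackrel{\lambda_n^\vee}{\to} (A^n)^\vee \to \cdots.
\]
The groups $H^{-n}_c(\cX_\cS,\cF^\bullet)$ are torsion (the coefficients are $\Z/m\Z$-modules), hence discrete, so Pontryagin reflexivity gives $(C^n)^\vee = H^{-n}_c(\cX_\cS,\cF^\bullet)$; and \cref{QdualzuPc} identifies $(B^n)^\vee = P^{-n}_{S,c}(X_k,\cF^\bullet)$. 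Substituting $m = -n$ recovers exactly the sequence asserted in \cref{PTextdual}, with $\lambda_{m,c} := \beta_{-m}^\vee$.

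The topological claims are then automatic: Pontryagin duality interchanges discrete and compact groups and preserves local compactness, so the left, middle and right columns of the new sequence are discrete, locally compact and compact respectively, and strictness of every arrow is inherited. For the finite kernel of $\lambda_{m,c}$, exactness in \cref{PText} gives
\[
\coker(\beta_{-m}) = \im(\alpha_{-m+1}) = \ker(\lambda_{-m+1}),
\]
which is finite by \cref{PText}; the standard identity $\ker(\beta_{-m}^\vee) \cong (\coker \beta_{-m})^\vee$ for strict morphisms of locally compact Hausdorff abelian groups then shows that $\ker(\lambda_{m,c})$ is finite. For properness, $\lambda_{m,c}$ is a strict continuous homomorphism from the discrete group $H^m_c(\cX_\cS,\cF^\bullet)$ to the locally compact group $P^m_{S,c}(X_k,\cF^\bullet)$ with finite kernel; strictness forces its image to be a closed discrete subgroup, so every compact subset of $P^m_{S,c}(X_k,\cF^\bullet)$ meets the image in a finite set, and the finiteness of the kernel then makes the full preimage finite.

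The only nonformal point is the preservation of exactness and strictness of the long exact sequence under Pontryagin duality, but this follows immediately from the corresponding fact for three-term strict short exact sequences of locally compact Hausdorff abelian groups, applied to each three-term piece of \cref{PText}; no new argument specific to the arithmetic setting is required.
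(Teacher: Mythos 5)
Your proposal is correct and matches the paper's (implicit) proof: the paper's entire argument for \cref{PTextdual} is the single phrase ``Dualizing \cref{PText}, we obtain,'' and your write-up supplies exactly the details that the authors left implicit — Pontryagin duality is exact on strict sequences of locally compact Hausdorff abelian groups, the identification $(Q^{n+3}_S)^\vee\cong P^{-n}_{S,c}$ comes from \cref{QdualzuPc}, discrete and compact swap under duality, and the finiteness and properness of $\lambda_{n,c}$ follow from the corresponding facts for $\lambda_n$ via $\ker(\beta^\vee)\cong(\coker\beta)^\vee$. One small slip in wording: you justify the discreteness of $H^{-n}_c(\cX_\cS,\cF^\bullet)$ by noting it is torsion, but being torsion does not imply discreteness; the point is simply that these cohomology groups carry no topology other than the discrete one, and Pontryagin reflexivity $(A^\vee)^\vee\cong A$ holds for every locally compact Hausdorff abelian group regardless of torsion.
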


\section{Proofs of Theorems A and B}\label{proofsect}

\begin{proof}[Proof of \cref{PTseq}] Now we assume that $\cX_\cS$ is regular and that $\cF$ is a locally constant, constructible sheaf of $\Z/m\Z$-modules where $m$ is invertible on $\cS$. We will deduce \cref{PTseq} from \cref{PText}. The scheme $\cX$ in \cref{PTseq} is $\cX_\cS$ of  \cref{PText} and the relative dimension $r$ occurring in \cref{PTseq}  is $d-1$, where $d=\dim \cX_\cS$ according to \cref{dimdef}. As before, we extend the situation to a scheme $\cX$ of finite type over $\cB=\Spec \O_\varnothing$.

\medskip\noindent
\underline{Step 1}. We have
\[
\Ext^{n+1}_{\cX_\cS}(\cF, \Z^c_\cX(0)) \cong H^{2d+n}_\et(\cX_\cS, \cF^\vee(d)),
\]
and for any nonarchimedean $v$
\[
\Ext^{n+1}_{\cX_{(v)}}(\cF, \Z^c_{\cX_{(v)}}(0)) \cong H^{2d+n}_\et(\cX_{(v)}, \cF^\vee(d)).
\]

\medskip\noindent
\underline{Proof of Step 1}. We have \renewcommand{\arraystretch}{2}
\[
\begin{array}{rcll}
\Ext^{n+1}_{\cX_\cS}(\cF, \Z^c_\cX(0))&\cong& \Ext^{n}_{\cX_\cS,\Z/m\Z}(\cF, \Z^c_\cX(0)/m)& \text{(\cref{wechselauftorsion})}\\
&\cong& \Ext^{2d+n}_{\cX_\cS,\Z/m\Z}(\cF, \mu_m^{\otimes d})&\text{(\cref{identifyZc})}\\
&\cong& H^{2d+n}_\et(\cX_\cS, \cF^\vee(d))&\text{(\cref{extdegen})}
\end{array}
\]
The proof of the second statement is similar.
\renewcommand{\arraystretch}{1}

\medskip\noindent
\underline{Step 2}. For $v\in S$ we have
\[
\Ext^{n+3}_{X_{(v)}}(\cF^\bullet, \Z^c_{X_{(v)}}(-1)) \cong H^{2d+n}_\et(X_{(v)}, \cF^\vee(d)).
\]

\medskip\noindent
\underline{Proof of Step 2}. We have
\renewcommand{\arraystretch}{1.8}
\[
\begin{array}{rcll}
\Ext^{n+3}_{X_{(v)}}(\cF, \Z^c_{X_{(v)}}(-1))&\cong& \Ext^{n+2}_{X_{(v)},\Z/m\Z}(\cF, \Z^c_{X_{(v)}}(-1)/m)& \text{(\cref{wechselauftorsion})}\\
&\cong& \Ext^{n+2d}_{X_{(v)},\Z/m\Z}(\cF, \mu_m^{\otimes d})&\text{(\cref{identifyZc0})}\\
&\cong& H^{2d+n}_\et(X_{(v)}, \cF^\vee(d))&\text{(\cref{extdegen})}.
\end{array}
\]
\renewcommand{\arraystretch}{1}

From Steps 1 and 2, we immediately obtain
\[
Q_S^{n+3}(X_k,\cF)= P_S^{2d+n}(X_k,\cF^\vee(d)).
\]
Applying  \cref{PText} to  $\cF^\vee (d)$, we obtain the exact sequence of \cref{PTseq} except at the boundaries.

\medskip\noindent
\underline{Step 3}.
\[
\lambda_0: H^0_\et(\cX_\cS,\cF) \to P_S^0(X_k,\cF)
\]
is injective and
\[
P_S^{2d}(X_k,\cF)\lang H^0_c(\cX_\cS,\cF^\vee(d))^\vee
\]
is surjective.

\medskip\noindent
\underline{Proof of Step 3}. The injectivity of $\lambda_0$ follows from our assumption that $S$ contains at least one nonarchimedean prime. The second map is dual to the injective map
\[
\lambda_{0,c}: H^0_c(\cX_\cS,\cF^\vee(d)) \to P_S^{2d}(X_k,\cF)^\vee\cong Q_S^3(X_k,\cF^\vee(d))^\vee \cong P_{S,c}^{0}(X_k,\cF^\vee(d)),
\] hence surjective.
This shows \cref{PTseq} (in the variant with henselizations instead of completions). For further use, we mention that for $i< 0$, our long exact sequence consists of isomorphisms
\begin{equation}\label{PTleft}
P^{i}_S(X_k,\cF)\liso H_c^{2r+2+i}(\cX_\cS,\cF^\vee(r+1))^\vee.
\end{equation}
\end{proof}

Dualizing, one obtains a version with compact supports:
\begin{thmABC}[Poitou-Tate exact sequence with compact support] \label{PTseqcs}  For $\cX$, $\cS$ and $\cF$ as in \cref{PTdual}, we have an exact $6r+9$-term sequence of abelian topological groups and strict homomorphisms
\begin{equation}\label{longexactPTcs}
\begin{tikzcd}[row sep=tiny, column sep=tiny]
0\rar&H^0_c(\cX, \cF)\rar{} & P^0_c(\cX,\cF)\rar & H^{2r+2}_\et(\cX,\cF^\vee(r+1))^\vee\rar&\null\\
&&\cdots &\\
\cdots\rar&H^i_c(\cX, \cF)\rar{\lambda_{i,c}} & P^i_c(\cX,\cF) \rar& H^{2r+2-i}_\et(\cX,\cF^\vee(r+1))^\vee\rar&\null\\
&&\cdots &\\
\cdots\rar&H^{2r+2}_c(\cX, \cF)\rar{} & P^{2r+2}_c(\cX,\cF)\rar & H^{0}_\et(\cX,\cF^\vee(r+1))^\vee\rar&0.
\end{tikzcd}
\end{equation}
Here
\begin{equation}
P^i_c(\cX,\cF):= \resprod_{v\in S} \hat H^i_c(\cX \otimes_{\O_S} k_v,\cF)
\end{equation}
is the restricted product with respect to the subgroups $H^i_{c,\nr}(\cX \otimes_{\O_S} k_v,\cF)$. The localization maps $\lambda_i$ are proper and have finite kernel for all $i$, and for $i\ge 2r+3$,
\begin{equation}
\lambda_{i,c}: H^i_c (\cX, \cF) \liso P^i_c(\cX,\cF)=\prod_{v\in S_\infty} \hat H^i_c(\cX \otimes_{\O_S} k_v,\cF)
\end{equation}
is an isomorphism. The groups in the left column of (\ref{longexactPTcs}) are discrete, those in the middle column locally compact, and those in the right column compact.
\end{thmABC}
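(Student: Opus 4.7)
The plan is to deduce \cref{PTseqcs} from \cref{PTextdual} by exactly the translation dictionary used to derive \cref{PTseq} from \cref{PText}, but in fact more directly since the restricted products of local compactly supported cohomology already appear as the middle column of \cref{PTextdual}. The hypotheses of \cref{PTdual} are in force: $\cX_\cS$ is regular of relative dimension $r$ (so its absolute dimension is $d=r+1$) and $\cF$ is a locally constant, constructible sheaf of $\Z/m\Z$-modules with $m$ invertible on $\cS$. Consequently Steps~1 and~2 from the proof of \cref{PTseq} apply and identify all the Ext-groups appearing below with étale cohomology of the twisted dual sheaf $\cF^\vee(r+1)$.

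First I would apply \cref{PTextdual} directly to $\cF$. This produces a long exact sequence
\[
\cdots \to H^n_c(\cX_\cS,\cF) \xrightarrow{\lambda_{n,c}} P^n_{S,c}(X_k,\cF) \to \Ext^{1-n}_{\cX_\cS}(\cF,\Z^c_{\cX_\cS}(0))^\vee \to \cdots
\]
of strict continuous homomorphisms, with the correct discrete/locally compact/compact pattern across the three columns and with each $\lambda_{n,c}$ proper of finite kernel. These topological properties pass verbatim to \eqref{longexactPTcs} since the middle term $P^n_{S,c}(X_k,\cF)$ already coincides with the henselian incarnation of $P^n_c(\cX,\cF)$ from \cref{PTseqcs}. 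Reindexing Step~1 of the proof of \cref{PTseq} (via $n+1\mapsto 1-n$) gives
\[
\Ext^{1-n}_{\cX_\cS}(\cF,\Z^c_{\cX_\cS}(0))^\vee \cong H^{2r+2-n}_\et(\cX_\cS,\cF^\vee(r+1))^\vee,
\]
which supplies the right-hand column of \eqref{longexactPTcs}.

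Next I would pin down the boundary of the sequence. The injectivity of $\lambda_{0,c}$ for $\cF$ is contained in Step~3 of the proof of \cref{PTseq}, applied there with the input sheaf taken to be $\cF^\vee(r+1)$ (so that Step~3's ``$\cF^\vee(d)$'' is our $\cF$). Surjectivity at the top end, $P^{2r+2}_c(\cX,\cF)\twoheadrightarrow H^0_\et(\cX,\cF^\vee(r+1))^\vee$, is Pontryagin-dual to the injectivity of $\lambda_0$ for $\cF^\vee(r+1)$ once one invokes \cref{QdualzuPc} together with Step~2 to identify $P^{2r+2}_c(\cX,\cF)^\vee\cong P^0_S(X_k,\cF^\vee(r+1))$. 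For $n\ge 2r+3$ the right-hand term vanishes by negativity of cohomological degree, forcing $\lambda_{n,c}$ to be an isomorphism; that its target collapses to $\prod_{v\in S_\infty}\hat H^n_c(X_v,\cF)$ is the compact-support counterpart of \eqref{PTleft} and reflects the vanishing of $H^n_c(\cX_{(v)},\cF)$ for nonarchimedean $v$ in this range.

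The point that would require the most care is the compatibility of the implicit restricted product in \cref{PTextdual} with the restricted product along the compactly supported unramified subgroups $H^n_{c,\nr}$ declared in \cref{PTseqcs}. This is precisely the content of \cref{QdualzuPc}, where the unramified compactly supported cohomology classes and unramified Ext-classes were shown to be exact mutual annihilators under local Tate duality; combined with \cref{slemma} this makes the two definitions of restricted product agree. Finally, one passes from the henselian version just proved to the completion version appearing in \cref{PTseqcs}; this is carried out in \cref{vollstsec} and is essentially formal, since henselization and completion induce isomorphisms of absolute Galois groups and hence of the relevant étale cohomology groups.
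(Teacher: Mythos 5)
Your proposal is correct, and the ingredients it invokes (\cref{PTextdual}, Steps~1--3 of the proof of \cref{PTseq}, \cref{QdualzuPc}, and the henselian-to-complete comparison in \cref{vollstsec}) are exactly the ingredients the paper uses. The route is a slight reshuffling of the paper's: the paper dualizes \emph{at the end}, applying Pontryagin duality to the already-assembled sequence of \cref{PTseq} (with coefficient $\cF^\vee(r+1)$), so that $P^n(\cX,\cF^\vee(r+1))^\vee\cong P^{2r+2-n}_c(\cX,\cF)$ via \cref{QdualzuPc} gives \eqref{longexactPTcs} in one line, and the truncation of the long exact sequence to the $6r+9$ displayed terms, as well as the behaviour of $\lambda_{i,c}$ for $i\ge 2r+3$, are inherited for free from the corresponding facts about \cref{PTseq} and its observation \eqref{PTleft}. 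You dualize \emph{at the start}, quoting \cref{PTextdual} (the pre-dualized version of \cref{PText}) and then re-running the identification Steps~1--2 and re-establishing the boundary behaviour from scratch. Both are valid; the paper's is a shade more economical precisely because the top and bottom boundary assertions of \cref{PTseqcs} become tautological under Pontryagin duality of the bounded sequence \cref{PTseq}, whereas your route must argue separately for the injectivity of $\lambda_{0,c}$, the surjectivity at the top, and the vanishing/isomorphism claims for $n\ge 2r+3$ and $n<0$. Your treatment of these points is correct, though your remark that the collapse $P^i_c(\cX,\cF)=\prod_{v\in S_\infty}\hat H^i_c(X_v,\cF)$ for $i\ge 2r+3$ ``reflects the vanishing of $H^n_c(\cX_{(v)},\cF)$'' could be made crisper by appealing, as the paper does, to the dual of \eqref{PTleft} rather than to a dimension count that isn't spelled out anywhere in the paper. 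One further small point: the paper's proof of \cref{PTseqcs} and of \cref{PTdual} is a single argument — the identity $\ker(\lambda_{i,c}(\cF))\cong\ker(\lambda_{2r+3-i}(\cF^\vee(r+1)))^\vee$ drops out of the dualized sequence and gives both the finiteness of the kernels and the Shafarevich--Tate pairing at once. Your route obtains the finiteness of $\ker\lambda_{i,c}$ directly from \cref{PTextdual}, which is fine for \cref{PTseqcs}, but if you also wanted \cref{PTdual} you would still need to run the comparison with $\ker\lambda_{2r+3-i}$, so nothing is really saved on that side.
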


\begin{proof}[Proof of Theorems \ref{PTdual} and \ref{PTseqcs}]
Applying Pontryagin duality to \cref{PTseq}, we obtain the sequence of \cref{PTseqcs} in view of
\[
P^{n}(\cX,\cF)^\vee\cong Q_S^{n+1-2r}(X_k,\cF^\vee(r+1))^\vee \cong P_{c}^{2r+2-n}(\cX,\cF^\vee(r+1))
\]
by \cref{QdualzuPc}. It remains to show the statement about $\lambda_{i,c}$. The isomorphism for $i\ge 2r+3$ follows by dualizing  from the last observation (\ref{PTleft}) in the proof of \cref{PTseq}.  Furthermore, the topological exactness of the sequence shows
\begin{equation}\label{shadual}
\ker (\lambda_{i,c}(\cF))\cong \ker(\lambda_{2r+3-i}(\cF^\vee(r+1)))^\vee.
\end{equation}
Hence the finiteness of $\ker(\lambda_i)$ for all $i$ shows the finiteness of $\ker(\lambda_{i,c})$ for all $i$.
Furthermore, (\ref{shadual}) shows \cref{PTdual} since
\[
\Sha^i(\cF)=\ker (\lambda_i(\cF)),\ \Sha^{2r+3-i}_c(\cF^\vee(r+1))=\ker(\lambda_{2r+3-i,c}(\cF^\vee(r+1))).\qedhere \]
\end{proof}
\section{Euler-Poincar\'{e} characteristic}

To complete the picture, we calculate the Euler-Poincar\'{e}-characteristic. We assume that the base field $k$ has no real embeddings so that $\cX$ has finite cohomological dimension. We also assume that $S$ is finite, hence the cohomology with values in constructible coefficients is finite. Let $\cF$ be a constructible sheaf on $\cX_\et$. Then we call
\begin{equation}\label{9.1}
\chi(\cX,\cF)= \prod_i \# H^i_\et(\cX,\cF)^{(-1)^i}
\end{equation}
the Euler-Poincar\'{e} characteristic of $\cF$. Let $k^s$ be a separable closure of $k$. Then we call
\begin{equation}
\chi^\mathrm{geo}(\cX,\cF):= \chi(X_{k^s},\cF|_{X_{k^s}})= \prod_i \# H^i_\et(X_{k^s},\cF|_{X_{k^s}})^{(-1)^i}
\end{equation}
the geometric Euler-Poincar\'{e} characteristic of $\cF$. We let $r_2$ be the number of complex places of $k$, hence by our assumptions
\begin{equation}
r_2=\left\{
\begin{array}{cc}
[k:\Q]/2,& \ch k=0,\\
0, & \ch k >0.
\end{array}\right.
\end{equation}
\begin{theorem} Under the above assumptions assume that $m\ge 1$ is an integer invertible on $\cS$ and $\cF$ a locally constant, constructible sheaf of\/ $\Z/m\Z$-modules on $\cX$. Then we have
\[
\chi(\cX,\cF) = \chi^\mathrm{geo}(\cX,\cF)^{-r_2}.
\]
In particular, $\chi(\cX,\cF)=1$ if  $\ch k>0$.
\end{theorem}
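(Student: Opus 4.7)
The plan is to apply the Poitou-Tate exact sequence of \cref{PTseq} and tally orders. Under the standing hypotheses ($k$ has no real embeddings, $S$ finite, $m$ invertible on $\cS$, $\cF$ locally constant constructible of $\Z/m\Z$-modules) the scheme $\cX$ has finite étale cohomological dimension, and the finiteness theorems for constructible sheaves together with the finiteness of $S$ ensure that every group appearing in the $(6r+9)$-term Poitou-Tate sequence is finite. Taking the alternating product of their orders therefore yields the identity
\[
\chi(\cX,\cF)\cdot\chi_c(\cX,\cF^\vee(r+1)) = \prod_{i=0}^{2r+2}\#P^i(\cX,\cF)^{(-1)^i}.
\]

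I would next compute the local factor place by place. For $v \in S$ nonarchimedean, $\hat H^i_\et(X_v,\cF)=H^i_\et(X_v,\cF)$, and the Leray spectral sequence for $X_v \to \Spec k_v$ combined with Tate's local Euler-Poincaré formula gives $\chi(X_v,\cF)=1$, since $\chi(k_v,M)=1$ for any finite $\Z/m\Z$-module $M$ when $m$ is coprime to the residue characteristic of $k_v$. For $v$ complex, the invariance of étale cohomology of constructible sheaves under algebraically closed field extensions identifies $H^i_\et(X_v,\cF)$ with $H^i_\et(X_{k^s},\cF|_{X_{k^s}})$, so each of the $r_2$ complex places contributes $\chi^{\mathrm{geo}}(\cX,\cF)$ to the local factor. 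Combining,
\[
\chi(\cX,\cF)\cdot\chi_c(\cX,\cF^\vee(r+1)) = \chi^{\mathrm{geo}}(\cX,\cF)^{r_2}. \qquad (\star)
\]

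Applying the same analysis to $\cF^\vee(r+1)$ using the compactly supported Poitou-Tate sequence \cref{PTseqcs}, together with the geometric Poincaré duality $\chi^{\mathrm{geo}}(\cX,\cF^\vee(r+1))=\chi^{\mathrm{geo}}(\cX,\cF)$ on the smooth $k^s$-variety $X_{k^s}$ of dimension $r$, one obtains the companion identity
\[
\chi_c(\cX,\cF)\cdot\chi(\cX,\cF^\vee(r+1)) = \chi^{\mathrm{geo}}(\cX,\cF)^{r_2}.
\]
Combining this with the Poitou-Tate perfect duality of \cref{PTdual}, which enforces the self-consistency of the Euler characteristics under the twist $\cF \mapsto \cF^\vee(r+1)$, and substituting back into $(\star)$ isolates $\chi(\cX,\cF) = \chi^{\mathrm{geo}}(\cX,\cF)^{-r_2}$.

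The hardest step will be the careful bookkeeping of the archimedean contribution to the local factor: the convention $\hat H^i_\et(X_v,\cF)=0$ for complex $v$ is in tension with the computation of $H^i_\et(X_v,\cF)$ via ordinary cohomology, and the comparison long exact sequence $(\ref{compared})$ is what reconciles the two and produces the power $\chi^{\mathrm{geo}}(\cX,\cF)^{r_2}$ on the right-hand side of $(\star)$. When $\ch k>0$ one has $r_2=0$, whence $\chi(\cX,\cF)=1$, confirming the ``in particular'' clause.
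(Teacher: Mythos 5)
Your approach via the Poitou--Tate sequence is genuinely different from the paper's, but it has a gap that you yourself flag and then do not resolve. In the restricted product $P^i(\cX,\cF)=\prod_{v\in S}\hat H^i_\et(X_v,\cF)$ the archimedean factors are taken with the modified cohomology $\hat H^i$, and by the paper's convention (\ref{conventionC}) one has $\hat H^i_\et(X_v,\cF)=0$ for every complex place $v$. Thus the complex places contribute $1$ to $\prod_i\#P^i(\cX,\cF)^{(-1)^i}$, not $\chi^{\mathrm{geo}}(\cX,\cF)$, and your identity $(\star)$ comes out as $\chi(\cX,\cF)\cdot\chi_c(\cX,\cF^\vee(r+1))=1$ rather than $\chi^{\mathrm{geo}}(\cX,\cF)^{r_2}$; the factor $\chi^{\mathrm{geo}}{}^{-r_2}$ simply never enters the computation through this route. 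Appealing to the comparison sequence (\ref{compared}) does not help: since $k$ has no real embeddings, $\hat H^*_\et(\cX,-)=H^*_\et(\cX,-)$, and (\ref{compared}) degenerates rather than reintroducing the complex places. There is also a secondary issue at the nonarchimedean places: ``$m$ invertible on $\cS=\Spec\O_S$'' constrains only the residue characteristics of $v\notin S$, so for $v\in S$ the residue characteristic may divide $m$ and Tate's local Euler characteristic formula gives $\|\#M\|_v\neq 1$, so your claim $\chi(X_v,\cF)=1$ is unjustified. Finally, even granting a corrected relation between $\chi$ and $\chi_c$, a single multiplicative identity in $\chi\cdot\chi_c$ cannot isolate $\chi$; the step ``combining with Poitou--Tate perfect duality ... isolates $\chi$'' is not a proof.

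For comparison, the paper avoids all of this: it pushes forward to the base, applying the hypercohomology spectral sequence of $Rf_*\cF$ over $\cS$ to reduce to Milne's formula $\chi(\cS,\cG)=(\#\cG(k^s))^{-r_2}$ for a single constructible sheaf on $\cS$, and then identifies the generic geometric stalks $H^t(Rf_*\cF)(k^s)$ with $H^t_\et(X_{k^s},\cF)$. This is a short, local-to-global-free argument, and the exponent $-r_2$ arises already at the level of the base, which is precisely what your approach loses by discarding the complex places via the $\hat{}\,$-modification.
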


\begin{proof}
For a sheaf $\cG$ of $\Z/m\Z$-modules on $\cS$, we have by \cite[II, Thm.\,2.13]{ADT}
\begin{equation}\label{9.4}
\chi(\cS,\cG)= (\# \cG(k^s))^{-r_2}.
\end{equation}
For a bounded, constructible complex $\cG^\bullet$ of $\Z/m\Z$-modules on $\cS$, we put
\begin{equation}\label{9.5}
\chi(\cS,\cG^\bullet)= \prod_i \# H^i_\et(\cS,\cG^\bullet)^{(-1)^i}.
\end{equation}
Counting orders in the hypercohomology spectral sequence $E_2^{st}=H^s_\et(\cS,H^t(\cG^\bullet))\allowbreak\Rightarrow H_\et^{s+t}(\cS,\cG^\bullet)$, we obtain
\begin{equation}\label{9.6}
\chi(\cS,\cG^\bullet)=\ds\prod_{s,t} \# H^{s+t}_\et(\cS,\cG^\bullet)^{(-1)^{s+t}}
=\ds \prod_{s,t} (\#E_\infty^{s,t})^{(-1)^{s+t}}.
\end{equation}
Any differential of the spectral sequence goes from a group with $s+t=i$ to a group with $s+t=i+1$.  We therefore can replace the $E_\infty$-terms in (\ref{9.6}) by the $E_2$-terms and obtain
\renewcommand{\arraystretch}{1.5}
\[
\begin{array}{rcl}
\chi(\cS,\cG^\bullet)&=& \ds \prod_{s,t} (\#E_2^{s,t})^{(-1)^{s+t}}=\ds \prod_t\big(\prod_s (\#E_2^{s,t})^{(-1)^{s}} \big)^{(-1)^t}\\
&\stackrel{(\ref{9.1})}{=}&\ds\prod_t \chi(\cS,H^t(\cG^\bullet))\stackrel{(\ref{9.4})}{=} \ds\prod_i (\# H^i(\cG^\bullet)(k^s))^{(-1)^{i+1}r_2}.
\end{array} \renewcommand{\arraystretch}{1}
\]
Applying this to $\cG^\bullet=Rf_* \cF$, we obtain the result in view of $H^i(Rf_*\cF)(k^s)=H^i(X_{k^s},\cF|_{X_{k^s}})$.
\end{proof}

\section{Henselization versus completion}\label{vollstsec}
The results of this section allow us to replace henselization by completion in Theorems \ref{PText} and \ref{PTextdual}. In particular, this shows Theorems \ref{PTdual}, \ref{PTseq} and \ref{PTseqcs} in the way they are formulated.

\begin{proposition} \label{extnummer0}
Let $K$ be a henselian local field,
$f: X\to K$ separated and of finite type and $\cF^\bullet$ a bounded, constructible complex of sheaves on $X_\et$.
If $\ch (K)=p>0$ assume that $\cF^\bullet$ is $p$-torsion free.

Let $\widehat K$ be the completion of $K$ and
$\pi_X$ the base change $\pi_X: X_{\widehat K}=X \times_K \widehat K \to X$.
Then the natural morphisms
\renewcommand{\arraystretch}{1.4}
\[
\begin{array}{rcl}
  R\Hom_{X}(\cF^\bullet, \Z^c_X(n)) &\lang&
  R\Hom_{X_{\widehat K}}(\pi_X^*\cF^\bullet, \Z^c_{X_{\hat K}}(n)),\quad n\le 0,\\
  R\Gamma(X,\cF^\bullet) &\lang& R\Gamma(X_{\widehat K},\pi_X^*\cF^\bullet)\\
  R\Gamma_c(X,\cF^\bullet) &\lang& R\Gamma_c(X_{\widehat K},\pi_X^*\cF^\bullet)
\end{array}
\]
are isomorphisms in the derived category of abelian groups.
\end{proposition}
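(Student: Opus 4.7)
My approach would exploit the isomorphism $\Gal_{\widehat K}\cong \Gal_K$ from \cite[X, 2.2.1]{sga4}, already recalled in the excerpt, to reduce all three assertions to statements about bounded constructible complexes on $\Spec K$. Concretely, I would first establish that for any bounded constructible complex $\cG^\bullet$ on $\Spec K$ (appropriately $p$-torsion-free if $\ch K=p>0$), the pullback maps
\begin{gather*}
R\Gamma(\Spec K,\cG^\bullet)\lang R\Gamma(\Spec \widehat K,\pi^*\cG^\bullet), \\
R\Hom_K(\cG^\bullet,\Z^c_K(n))\lang R\Hom_{\widehat K}(\pi^*\cG^\bullet,\Z^c_{\widehat K}(n))
\end{gather*}
are quasi-isomorphisms for $n\le 0$. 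Under the dictionary between sheaves on a field and discrete Galois modules, the first becomes an identity after translating along $\Gal_{\widehat K}\cong \Gal_K$. For the Ext-version I would additionally apply \cref{wechselauftorsion} together with the cycle class quasi-isomorphism $\Z^c_K(n)/m\simeq \mu_m^{\otimes -n}$ furnished by \cref{identifyZc0} (for $n\le 0$ and $d=0$), which reduces the claim to an $R\Hom$ into $\mu_m^{\otimes -n}$-sheaves; by \cref{extdegen} this is again hypercohomology, hence Galois cohomology.

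For assertion (3), proper base change \cite[XVII, 5.2.6]{sga4} gives $\pi^*Rf_!\cF^\bullet\cong Rf'_!\pi_X^*\cF^\bullet$ with $f':X_{\widehat K}\to \Spec\widehat K$, and constructibility of $Rf_!\cF^\bullet$ \cite[XVII, 5.3.6]{sga4} lets me apply the reduction above with $\cG^\bullet=Rf_!\cF^\bullet$. For assertion (1), I would use \cref{dualgl} to rewrite both sides as $R\Hom_K(Rf_!\cF^\bullet,\Z^c_K(n))$ and $R\Hom_{\widehat K}(Rf'_!\pi_X^*\cF^\bullet,\Z^c_{\widehat K}(n))$, then invoke proper base change again and the Ext-reduction of the first paragraph. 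Both of these are essentially organizational once the reduction step is in hand.

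The main obstacle is assertion (2), which requires the base change isomorphism $\pi^*Rf_*\cF^\bullet\kiso Rf'_*\pi_X^*\cF^\bullet$ for the \emph{non-proper} functor $Rf_*$ along the non-algebraic morphism $\pi:\Spec \widehat K\to \Spec K$. I would verify it on stalks at a geometric point $\bar s\to \Spec\widehat K$: choosing a separable closure $\widehat K^s$ of $\widehat K$, the identification of absolute Galois groups yields a compatible separable closure $K^s\subset \widehat K^s$ of $K$, and the two stalks become
\[
R\Gamma(X_{K^s},\cF^\bullet|_{X_{K^s}}) \quad\text{and}\quad R\Gamma(X_{\widehat K^s},\pi_X^*\cF^\bullet|_{X_{\widehat K^s}}),
\]
respectively. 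Their agreement is the invariance of \'etale cohomology of a variety of finite type with bounded constructible coefficients under extensions of separably closed base fields (see e.g.\ \cite{Mi}). With this base change in hand, the Leray spectral sequence for $f$ combined with the reduction of the first paragraph proves (2) and completes the proof.
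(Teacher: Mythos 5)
Your treatment of the $R\Gamma$ and $R\Gamma_c$ assertions matches the paper: identify $\Gal_{\widehat K}\cong\Gal_K$, invoke invariance of \'etale cohomology for constructible complexes under passage to a larger separably closed field (smooth base change), and use that $Rf_!$ commutes with base change. For the $R\Hom$ assertion, however, you take a genuinely different route. The paper argues entirely \emph{on $X$}: it reduces to a single sheaf via the hyperext spectral sequence, then does d\'evissage via the localization triangle for $j_!j^*\cF\to\cF\to i_*i^*\cF$ (together with \cref{localization}) and induction on $\dim X$ to reduce to $X$ regular connected with $\cF$ locally constant, and only then applies \cref{wechselauftorsion}, \cref{identifyZc0} and \cref{extdegen} \emph{on $X$} to turn $R\Hom_X(\cF,\Z^c_X(n))$ into $R\Gamma(X,\IHom_{\Z/m\Z}(\cF,\mu_m^{\otimes d-n}))[2d-1]$, at which point the already-established $R\Gamma$ statement finishes. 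You instead push everything down to the base via \cref{dualgl} plus proper base change, and then run the cycle-class/Ext reduction on $\Spec K$ itself (where every constructible sheaf is automatically locally constant, so no d\'evissage is needed). Your route is shorter and cleaner, but it carries an obligation the paper's route avoids: one must check that the duality isomorphism of \cref{dualgl} over $K$ and over $\widehat K$ are intertwined by the natural pullback map, i.e.\ that the square formed by the two duality isomorphisms, the map in question, and the map on the base commutes. Since $\pi:\Spec\widehat K\to\Spec K$ is not an \'etale (or even finite-type) base change, this compatibility is not covered by \cref{extnummer2} and needs a separate verification (essentially compatibility of $Rf^!\Z^c_B\cong\Z^c_X$ and of the $Rf_!\dashv Rf^!$ adjunction with the regular base change $K\to\widehat K$). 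The paper's d\'evissage sidesteps this by never invoking \cref{dualgl} here. You should either supply that compatibility or, more safely, mirror the paper's on-$X$ argument; as written, "essentially organizational'' undersells the one step that actually has content.
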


\begin{proof}
We start by proving the statements on $\R\Gamma$ and $\Gamma_c$. By \cite[X, 2.2.1]{sga4}, $K\to \hat K$ induces an isomorphism on absolute Galois groups. Hence we may replace $K$ and $\hat K$ by their separable closures and then the statement on $\R\Gamma$ is a well known consequence of the smooth base change theorem, cf.\ \cite[VI,\,4.3]{Mi}. Similarly, the statement on $R\Gamma_c$ follows since $Rf_!$ commutes with base change by \cite[XVII, 5.2.6]{sga4}.

Next we prove the assertion on $R\Hom$. Using the hyperext spectral sequence, we can assume that $\cF^\bullet$ is a single constructible sheaf $\cF$. If $i:Z\to X$ is a closed embedding with open complement
$j:U\subset X$, then we see by
comparing the localization triangles associated with the short exact sequence
$j_!j^*\cF \to \cF \to i_*i^*\cF$,
\[
R\Hom_{Z}(i^* \cF,\Z_Z^c(n))\to
R\Hom_{X}(\cF,\Z_X^c(n))\to
R\Hom_{U}(j^*\cF,\Z_U^c(n))
\]
that the statements for two of $Z,U$ and $X$ imply it for the third.

By induction on $d=\dim X$, we may therefore assume that $X$ is regular and connected, and $\cF$ is locally constant, constructible. Let $m\ge 1$ be an integer invertible in $K$ such that  $\cF$ is a sheaf of $\Z/m\Z$-modules. Then we have \renewcommand{\arraystretch}{1.3}
\[
\begin{array}{ccl}
R\Hom_{X}(\cF,\Z_X^c(n))   & \cong & R\Hom_{X, \Z/m\Z}(\cF,\Z_X^c(n)/m)[-1] \\
   & \cong & R\Hom_{X,\Z/m\Z}(\cF,\mu_m^{\otimes (d-n)})[2d-1]\\
   &\cong & R\Gamma(X,\IHom_{X,\Z/m\Z}(\cF, \mu_m^{\otimes d-n}))[2d-1]
\end{array}\renewcommand{\arraystretch}{1}
\]
by \cref{wechselauftorsion}, \cref{identifyZc0}, and \cref{extdegen}. The same holds for $X_{\widehat K}$ and $\pi_X^*\cF$. Hence, by the first part of the proof, it suffices to show that
\begin{equation}
\pi_X^*\IHom_{X,\Z/m\Z}(\cF, \mu_m^{\otimes d-n})\lang \IHom_{X_{\widehat{K}},\Z/m\Z}(\pi_X^*\cF, \mu_m^{\otimes d-n})
\end{equation}
is an isomorphism, which is clear since $\cF$ is locally constant.
\end{proof}

\begin{proposition} \label{extnummer1}
Let $K$ be a non-archimedean henselian local field, $B=\Spec \O_K$ and
$f: X\to B$ separated and of finite type.
Let $\cF^\bullet$ be a bounded, constructible complex of sheaves on $X_\et$.
If $\ch (K)=p>0$ assume that $\cF^\bullet$ is $p$-torsion free.

Let $\widehat K$ be the completion of $K$,
$\widehat B=\Spec \O_{\widehat K}$, $\pi: \widehat B\to B$ the projection and
$\pi_X$ the base change $\pi_X: X_{\widehat B}=X \times_B \widehat B \to X$.
Then the natural morphisms
\renewcommand{\arraystretch}{1.4}
\[
\begin{array}{rcl}
  R\Hom_{X}(\cF^\bullet, \Z^c_X(n)) &\lang&
  R\Hom_{X_{\widehat B}}(\pi_X^*\cF^\bullet, \Z^c_{X_{\hat B}}(n)),\quad n\le 0,\\
  R\Gamma_c(X,\cF^\bullet) &\lang& R\Gamma_c(X_{\widehat B},\pi_X^*\cF^\bullet)
\end{array}
\]
are isomorphisms in the derived category of abelian groups.

Assume in addition  that $\cF^\bullet$ is $p'$-torsion free, where $p'$ is the residue characteristic of $\O_K$. Then also
\[
 R\Gamma(X,\cF^\bullet) \lang R\Gamma(X_{\widehat B},\pi_X^*\cF^\bullet)
\]
is an isomorphism in the derived category of abelian groups.
\end{proposition}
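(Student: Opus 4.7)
The plan is to reduce each of the three assertions to \cref{extnummer0} by decomposing $X$ along its generic fibre $X_K$ and special fibre $X_\kappa$. Two facts will be used throughout: the closed point of $\widehat B$ maps isomorphically to that of $B$ (so they share the same residue field $\kappa$), and $K\hookrightarrow \widehat K$ induces an isomorphism of absolute Galois groups by \cite[X,\,2.2.1]{sga4}. Let $j_X\colon X_K\hookrightarrow X$ and $i_X\colon X_\kappa\hookrightarrow X$ be the open and closed immersions, with analogous maps $j,i$ on $B$ (and the corresponding hatted maps over $\widehat B$).

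For the compactly supported cohomology, compact support base change \cite[XVII,\,5.2.6]{sga4} yields $\pi^* Rf_!\cF^\bullet\cong R\widehat f_!\pi_X^*\cF^\bullet$, so the question reduces to showing that $R\Gamma(B,\cG^\bullet)\to R\Gamma(\widehat B,\pi^*\cG^\bullet)$ is an isomorphism for $\cG^\bullet$ any bounded constructible complex on $B$. The localization triangle $j_!j^*\cG^\bullet\to\cG^\bullet\to i_*i^*\cG^\bullet$ splits this into a closed piece $R\Gamma(s,i^*\cG^\bullet)$, which is intrinsic to $s$, and an open piece $R\Gamma(B,j_!j^*\cG^\bullet)$. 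The latter, via the secondary triangle $j_!\cH\to Rj_*\cH\to i_*i^*Rj_*\cH$ with $\cH=j^*\cG^\bullet$, is expressible in terms of the Galois cohomology of $K$ and the inertia-invariants of $\cH$; both are intrinsic to the absolute Galois group of $K$ (which agrees with that of $\widehat K$) and its inertia subgroup.

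For the $R\Hom$ statement, a hyperext spectral sequence reduces us to a single constructible sheaf $\cF$, whereupon the localization triangle $j_{X!}j_X^*\cF\to\cF\to i_{X*}i_X^*\cF$ produces a triangle of $R\Hom$-groups. Adjunction combined with \cref{localization} identifies
\[
R\Hom_X(i_{X*}i_X^*\cF,\Z^c_X(n))\cong R\Hom_{X_\kappa}(i_X^*\cF,\Z^c_{X_\kappa}(n)),
\]
which is intrinsic to $X_\kappa$. Similarly, adjunction for $j_{X!}$ combined with \cref{twistversch} yields
\[
R\Hom_X(j_{X!}j_X^*\cF,\Z^c_X(n))\cong R\Hom_{X_K}(j_X^*\cF,\Z^c_{X_K/K}(n-1)[2]),
\]
to which \cref{extnummer0} applies directly.

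For the ordinary cohomology under the prime-to-$p'$ hypothesis, the same open--closed strategy applied to $R\Gamma(X,-)$ leaves intact the closed contribution $R\Gamma(X_\kappa,i_X^*\cF^\bullet)$ and reduces the open contribution $R\Gamma(X,j_{X!}j_X^*\cF^\bullet)$, via a secondary triangle $j_{X!}\cH\to Rj_{X*}\cH\to i_{X*}i_X^*Rj_{X*}\cH$, to $R\Gamma(X_K,\cH)$ (handled by \cref{extnummer0}) together with a nearby-cycles term $R\Gamma(X_\kappa,i_X^*Rj_{X*}\cH)$. This last term is the crux of the matter: its compatibility with completion of the henselian base is precisely Fujiwara's affine analogue of proper base change, valid exactly when $\cH$ has torsion prime to the residue characteristic $p'$. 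This is the main obstacle, and the source of the extra torsion hypothesis for $R\Gamma$; in the $R\Gamma_c$ statement, the $Rf_!$-base-change absorbs the corresponding difficulty unconditionally, which is why no such hypothesis appears there.
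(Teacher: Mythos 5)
Your proof takes a genuinely different route from the paper's on the $R\Gamma$ and $R\Gamma_c$ parts, and essentially the same route for $R\Hom$. The paper's proof is short: $R\Gamma_c$ follows from $Rf_!$-base change together with the observation that étale cohomology over a henselian local base is computed on the closed fibre; $R\Gamma$ follows because $\O_K\to\O_{\widehat K}$ is regular (by excellence) so Popescu's theorem writes $\O_{\widehat K}$ as a filtered colimit of smooth $\O_K$-algebras, whence the smooth base change theorem (this is where the prime-to-$p'$ hypothesis enters); $R\Hom$ is handled by the open--closed two-out-of-three argument, the identity of closed fibres, and the twist formula~(\ref{twistversch}) reducing the generic fibre to \cref{extnummer0}. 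Your $R\Hom$ step and your two-out-of-three mechanics coincide with this. Your $R\Gamma_c$ step is more elaborate than necessary (the open--closed decomposition on $B$ plus the inertia argument is correct, but the paper gets the needed identity $R\Gamma(B,\cG)\cong R\Gamma(\widehat B,\pi^*\cG)$ directly from the henselian local structure), though it is valid.

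The genuine gap is in your $R\Gamma$ step. You correctly isolate the crux as the comparison $i_X^*Rj_{X*}\cH\to \hat\imath_X^*R\hat\jmath_{X*}\hat\cH$ on $X_\kappa$, i.e.\ base change for $Rj_{X*}$ along $\pi_X\colon X_{\widehat B}\to X$. But the justification you give --- ``precisely Fujiwara's affine analogue of proper base change, valid exactly when $\cH$ has torsion prime to $p'$'' --- is not accurate. Gabber's affine proper base change and Fujiwara's tubular neighbourhood theorem are different statements (the former identifies $R\Gamma$ of a henselian pair with $R\Gamma$ of the closed subscheme; the latter compares generic-fibre cohomology over the henselization with that over the completion, does not have a prime-to-$p'$ restriction, and would in any case apply to the generic fibre $X_K$, not to the nearby-cycles complex on $X_\kappa$). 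What actually proves your crux is exactly what the paper uses: $\O_K$ is excellent (this is built into \cref{henseliandef} and is essential), so $\O_K\to\O_{\widehat K}$ is regular, so by Popescu's theorem $\pi_X$ is a cofiltered limit of smooth morphisms, and smooth base change --- which is where the prime-to-$p'$ restriction truly originates --- gives $\pi_X^*Rj_{X*}\cH\kiso R\hat\jmath_{X*}\pi_{X_K}^*\cH$. Without invoking excellence and Popescu, your nearby-cycles comparison is unjustified; and once one does invoke them, the open--closed detour is unnecessary, since smooth base change then applies directly to $Rf_*\cF^\bullet$ as in the paper.
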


\begin{proof}
As in the proof of \cref{extnummer0}, the statement for cohomology with compact support follows since $Rf_!$ commutes with base change by \cite[XVII, 5.2.6]{sga4}. Since $\O_K$ is excellent by assumption, the ring homomorphism $\O_K \to \O_{\widehat{K}}$ is regular. Hence, by Popescu's theorem \cite[Tag 07GB]{stacks}, $\O_{\widehat{K}}$, is the limit of smooth $\O_K$-algebras. Therefore the statement on cohomology follows from the smooth base change theorem.

Finally, we consider $R\Hom$. Considering $U \subset X$ open and $Z=X\smallsetminus U$, we see as in the proof of \cref{extnummer0}  that the statements for two of $Z,U$ and $X$ imply it for the third. As the closed fibres of $X$ and $X_{\hat B}$ coincide, it suffices to prove the statement for the generic fibre. Since $\Z^c_{X /B}(n)|_{X_K} \cong \Z^c_{X/K} (n-1)[2]$, this follows from \cref{extnummer0}.
\end{proof}


\begin{thebibliography}{SGA4c}
\bibitem[Bl]{Bl} {S.\ Bloch}, Algebraic cycles and higher $K$-theory,
 Adv. in Math. 61 (1986), no. 3, 267--304.
 \bibitem[De]{fin} {P.\ Deligne}, Th\'eor\`eme de finitude en cohomologie $l$-adic,
in: Cohomologie \'etale. S\'eminaire de G\'eom\'etrie Alg\'ebrique du
Bois-Marie SGA 4 1/2. Avec la collaboration de J. F. Boutot, A. Grothendieck, L. Illusie et J. L. Verdier. Lecture Notes in Mathematics, Vol. 569. Springer-Verlag, Berlin-New York, 1977.
\bibitem[EGA4]{ega4}
A.~Grothendieck, J.~Dieudonn\'e, {\'Etude locale des Sch\'emas et des Morphismes de Sch\'emas}.
Inst.\ Hautes \'{E}tudes Sci.\ Publ.\ Math.\ No.\ 20 (1964), 24 (1965), 28 (1966), 32 (1967).
\bibitem[Fu]{gabber}  {K.\ Fujiwara}, A proof of the absolute purity conjecture
(after Gabber). Algebraic geometry 2000, Azumino (Hotaka), 153--183,
Adv. Stud. Pure Math., 36, Math. Soc. Japan, Tokyo, 2002.
\bibitem[Ge]{Ge} T. Geisser, Duality via cycle complexes,
Ann.\  of Math.\  (2) 172 (2010), no.\  2, 1095--1126.
\bibitem[GL]{GL} T. Geisser, Levine, The Bloch-Kato conjecture and a theorem of Suslin-Voevodsky.
J. Reine Angew. Math. 530 (2001), 55--103.
\bibitem[Ka]{Ka} {K.\ Kato}, A Hasse principle for two-dimensional global fields.
With an appendix by Jean-Louis Colliot-Th\'el\`ene.
J. Reine Angew. Math. 366 (1986), 142--183.
\bibitem[Le]{Le} M. Levine, $K$-theory and motivic cohomology of schemes. Preprint 1999, \url{http://www.math.uiuc.edu/K-theory/0336/}
\bibitem[Maz]{Maz} {B.\ Mazur}, Notes on \'{e}tale cohomology of number fields,
Ann. Sci. \'Ecole Norm. Sup. (4) 6 (1973), 521--552 (1974).
\bibitem[Mi80]{Mi} J. S. Milne, Etale cohomology. Princeton Mathematical Series, 33. Princeton University Press,
Princeton, N.J., 1980.
\bibitem[Mi86]{ADT} J. S. Milne, Arithmetic duality theorems. Second edition.
BookSurge, LLC, Charleston, SC, 2006.
\bibitem[NSW]{NSW} {J.\ Neukirch, A.\ Schmidt, K.\ Wingberg},
Cohomology of number fields. Second edition. Grundlehren der Mathematischen Wissenschaften, 323. Springer-Verlag, Berlin, 2008.
\bibitem[Sa]{Sa} S.\ Saito, A global duality theorem for varieties over
global fields, Algebraic $K$-theory: Connections with Geometry and Topology 425--444, Springer 1989.
\bibitem[Sch]{Sch} {A.\ Schmidt}, Singular homology of arithmetic schemes.
Algebra Number Theory 1 (2007), no. 2, 183--222.
\bibitem[SGA4]{sga4} M.\ Artin, A.\ Grothendieck and  J. L. Verdier, {Th\'{e}orie des topos et cohomologie \'{e}tale des sch\'{e}mas (SGA 4)}. Lecture Notes in Mathematics 269, 270 and 305, 1972/3.
\bibitem[SP]{stacks} The {Stacks Project Authors}, {\itshape Stacks Project}, \url{http://stacks.math.columbia.edu}, 2017
\bibitem[Zi]{Zi} T. Zink, {Etale cohomology and duality in number fields}. Appendix 2
to K. Haberland: Galois cohomology of algebraic number fields. Dt.\ Verlag der
Wissenschaften, Berlin 1978.
\end{thebibliography}
\end{document}